\theoremstyle{plain}
\newtheorem{theorem}{Theorem}[section]
\newtheorem{lemma}{Lemma}[section]
\numberwithin{equation}{section}
\def\tht{\theta}
\def\Om{\Omega}
\def\om{\omega}
\def\e{\varepsilon}
\def\G{\Gamma}
\def\l{\lambda}
\def\p{\partial}
\def\D{\Delta}
\def\a{\alpha}
\def\d{\delta}
\def\vk{\varkappa}
\def\Ho{\mathring{W}_2}
\def\hf{\mathfrak{h}}
\def\la{\langle}
\def\ra{\rangle}
\def\cL{\mathcal{L}}
\def\Ho{\mathring{W}}
\DeclareMathOperator{\RE}{Re}
\DeclareMathOperator{\mes}{mes}
\DeclareMathOperator{\dist}{dist}
\DeclareMathOperator{\supp}{supp}
\begin{document}

\title{Operator estimates for non-periodically perforated domains with Dirichlet and nonlinear Robin conditions: vanishing limit}

\author{D.I. Borisov$^{1,2,3}$\footnote{Corresponding author}, J. K\v{r}\'{\i}\v{z}$^3$}

\date{\empty}

\maketitle

{\small
    \begin{quote}
1) Institute of Mathematics, Ufa Federal Research Center, Russian Academy of Sciences,  Chernyshevsky str. 112, Ufa, Russia, 450008
\\
2) Bashkir State  University, Zaki Validi str. 32, Ufa, Russia, 450076
\\
3) University of Hradec Kr\'alov\'e
62, Rokitansk\'eho, Hradec Kr\'alov\'e 50003, Czech Republic
\\
Emails: borisovdi@yandex.ru, jan.kriz@uhk.cz
\end{quote}

{\small
    \begin{quote}
    \noindent{\bf Abstract.} We consider a general second order linear elliptic equation in a finely perforated domain. The shapes of cavities  and their distribution in the domain are arbitrary and non-periodic; they are supposed to satisfy minimal natural geometric conditions. On the boundaries of the cavities we impose either the Dirichlet or a nonlinear Robin condition; the choice of the type of the boundary condition for each cavity is arbitrary. Then we suppose that for some cavities the nonlinear Robin condition is sign-definite in certain sense. Provided such cavities and ones with the Dirichlet condition are distributed rather densely in the domain and the characteristic sizes of the cavities and the minimal distances between the cavities satisfy certain simple condition, we show that a solution to our problem tends to zero as the perforation becomes finer. Our main result are order sharp estimates for the $L_2$- and $W_2^1$-norms of the solution uniform in the $L_2$-norm of the right hand side in the equation.

    \medskip

    \noindent{\bf Keywords:} perforated domain, non-periodic perforation, operator estimates, vanishing limit, order sharp estimates

    \medskip

    \noindent{\bf Mathematics Subject Classification: 35B27, 35B40}
 	
    \end{quote}
}

\section{Introduction}

Elliptic boundary value problems in finely perforated domains are one of the classical objects in the modern homogenization theory.  A typical formulation of such problems consists in an elliptic equation in a domain perforated by small closely spaced cavities with some boundary conditions on their boundaries. The main aim of the study is to characterize the behavior of the solutions to the considered problems as the perforation becomes finer. There are hundreds of works devoted to studying such problems and not trying to mention all of them, we just cite several books, where problems in perforated domains were considered \cite{Sha}, \cite{ZKO}, \cite{MK}, \cite{MK1}, \cite{OIS}, see also the references therein. Typical results obtained for the problems in perforated domains usually state a convergence of the solutions to those of some homogenized problems. The convergence is usually proved in $L_2$ or in $W_2^1$, in a weak or strong sense, for given right hand sides in the equations and boundary conditions.

During the last 20 years, a new direction is being developed in the homogenization theory devoted to so-called operator estimates. The matter is that the difference between the solutions to the perturbed and homogenized problems is estimated uniformly with respect to the $L_2$-norm of the right hand side in the equation. From the point of view of the spectral theory, in the case of linear equations such estimates correspond to the norm resolvent convergence and describe the convergence rates, while the classical results usually establishes the strong or weak resolvent convergence.

Operator estimates for problems in periodically perforated domains were established in   \cite{Khra}, \cite{Khra2}, \cite{ChDR}, \cite{Sus}, \cite{Past}, \cite{Zhi}. In first two papers on the boundaries of the cavities  the Neumann condition was imposed. In \cite{Zhi}, the perforation was considered as an example of a general approach on homogenization of abstract measures, and it also led to the Neumann conditions on the boundaries of the cavities. In all three papers  \cite{Sus}, \cite{Past}, \cite{Zhi}, the sizes of the cavities were proportional to the distances between them. In \cite{Khra2}, the boundaries of the cavities were subject to the Dirichlet conditions, their linear sizes were assumed  to satisfy certain  upper bound expressed in terms of the size of the periodicity cell. All cavities had the same shape but it was allowed to rotate them arbitrarily and to shift a little their positions. In paper \cite{Khra}  the perforation was pure periodic and the cavities were small balls with the Robin condition on the their boundaries.  A similar setting was studied in
\cite{ChDR}, where on the boundaries of the cavities also the Dirichlet and Neumann conditions were imposed. In all these papers, the homogenized problems were found and various operator estimates were established. The sharpness of these estimates was not discussed.

The case of a non-periodic perforation was considered in \cite{Post}. Here the cavities were non-periodically distributed small balls with the Dirichlet or Neumann conditions on their boundaries and certain conditions were imposed on the sizes of these balls ensuring that under the homogenization  the balls disappear. In addition, there was considered a  case, when the sizes of the balls with the Dirichlet condition were not too small so that the solution vanished in the limit in a part of the domain covered by such balls. The main results in \cite{Post} were also operator estimates for the considered problems.

There are also a few papers on operator estimates in domains finely perforated along a given manifold \cite{PRSE16}, \cite{MSB21}, \cite{AA22}. Here the perforation was of a general non-periodic structure, both the shapes of cavities and their distribution were arbitrary. Various cases of possible homogenized problems were considered and in all of them operator estimates were established. In some cases these estimates were shown to be order sharp.

In the present paper we consider a boundary value problem for a general  linear second order elliptic equation in a perforated domain; the equation is not necessary formally symmetric and involves complex-valued coefficients. The perforation is of a general structure, namely, both the shapes and distribution of the holes are arbitrary and only minimal natural geometric conditions are imposed on the perforation. The minimal distance between the cavities is characterized by a small positive parameter $\e$, while a characteristic linear size of each cavity is $\e\eta$, where $\eta=\eta(\e)$ is some bounded function.

On the boundaries of the cavities we impose either the Dirichlet or a nonlinear Robin condition. The choice of type of the boundary condition for each cavity is arbitrary and in the general situation we have the mixing of boundary conditions, that is, the boundaries of some cavities are subject to the Dirichlet condition, while on the boundaries of other cavities the nonlinear Robin condition is imposed. We assume that on the boundaries of  some cavities, the nonlinear Robin condition is sign-definite in certain sense and its strength is controlled by some function $\mu=\mu(\e)\geqslant 1$. We also suppose that such cavities together with the Dirichlet cavities are distributed rather densely in the domain. These two assumptions and also certain condition for the relation between $\e$, $\eta$ and $\mu$ then ensure that a solution to the considered vanishes in the limit $\e\to+0$, that is, as the perforation becomes finer. Namely, we establish operator estimates both in $L_2$- and $W_2^1$-norms uniformly in $L_2$-norm of the right hand side in the equation writing explicitly the convergence rate in terms of $\e$, $\eta$ and $\mu$. The convergence rate for the $L_2$-norm of the solution is twice better than that for the $W_2^1$-norm. We succeed to show that the found convergence rates  are order sharp; this is adduced by appropriate examples. One more important feature of our results is that they are true under minimal assumptions for the perforation,  which are expressed in geometric terms. They allow us to establish key local estimates, which can be interpreted as  the spectral  solidifying condition, which was one of the main assumptions in \cite{Post}.

\section{Problem and main results}\label{sec2}

Let $x=(x_1,\ldots,x_n)$ be Cartesian coordinates in $\mathds{R}^n$ and $\Om$ be a domain in $\mathds{R}^n$, the boundary of which has the smoothness  $C^2$. The domain $\Om$ can be both bounded or unbounded; a particular case $\Om=\mathds{R}^n$ is also possible. In the paper, we study an elliptic boundary value problem in a domain obtained by perforating $\Om$. This perforation and the problem are introduced as follows.

By $\e$ we denote a small positive parameter. We choose a family of points $M_k^\e\in\Om$, $k\in\mathds{M}^\e$, and a family of domains $\om_{k,\e}\subset \mathds{R}^d$, $k\in\mathds{M}^\e$, where $\mathds{M}^\e$ is   some  set of indices, which is at most countable. The domains $\om_{k,\e}$ are bounded and have $C^1$-boundaries. We then denote:
\begin{equation}\label{2.4}
\om_k^\e:=\big\{x:\, (x - M_k^\e)\e^{-1}\eta^{-1}(\e)\in \om_{k,\e}\big\}, \quad k\in\mathds{M}^\e,\qquad \tht^\e:=\bigcup\limits_{k\in\mathds{M}^\e} \om_k^\e,
\end{equation}
where $\eta=\eta(\e)$ is some function such that $0<\eta(\e)\leqslant 1$. The aforementioned perforated domain is introduced as $\Om^\e:=\Om\setminus\tht^\e$.  We shall formulate rigorous conditions for the geometry of perforation later, now we just say that the cavities $\om_k^\e$ are disjoint and at the same time the distances between the points $M_k^\e$ are small.

The coefficients of the elliptic equation we shall study are complex-valued functions $A_{ij}^\e=A_{ij}^\e(x)$, $A_j^\e=A_j^\e(x)$, $A_0^\e=A_0^\e(x)$ defined in the perforated domain $\Om^\e$, which are supposed to satisfy the conditions
\begin{equation}\label{2.5}
A_{ij}^\e,\, A_j^\e,  A_0^\e\in L_\infty(\Om^\e),
\qquad A_{ij}^\e=\overline{A_{ji}^\e},\qquad \sum\limits_{i,j=1}^{n} A_{ij}^\e(x)\xi_i\overline{\xi_j}\geqslant c_0\sum\limits_{j=1}^{n} |\xi_j|^2,\qquad x\in\Om^\e,\quad \xi_i\in\mathds{C},
\end{equation}
where $c_0>0$ is some fixed constant independent of $\e$, $\xi$ and $x$. 
The functions $A_{ij}^\e$, $A_j^\e$, $A_0^\e$ are bounded uniformly in $\e$ in the norm of $L_\infty(\Om^\e)$.

We introduce an arbitrary partition of the set
$\tht^\e$:
\begin{equation}\label{2.8a}
\tht_D^\e:=\bigcup\limits_{k\in\mathds{M}_D^\e} \om_k^\e,\qquad \tht_R^\e:=\bigcup\limits_{k\in\mathds{M}_R^\e} \om_k^\e, \qquad \mathds{M}_D^\e\cup \mathds{M}_R^\e=\mathds{M}^\e, \qquad \mathds{M}_D^\e\cap \mathds{M}_R^\e=\emptyset.
\end{equation}
By $a^\e=a^\e(x,u)$ we denote a measurable complex-valued function defined for $x\in\p\tht_R^\e$ and $u\in\mathds{C}$ 
 obeying the Lipschitz conditions
\begin{equation}\label{2.6}
|a^\e(x,u_1)-a^\e(x,u_2)|\leqslant a_0^\e |u_1-u_2|,\qquad u_1,u_2\in \mathds{C},
\end{equation}
where $a_0^\e$ is some constant independent of $x$, $u_1$, $u_2$ but depending on $\e$.  We also assume that the function $a^\e(x,u)$ satisfies the estimate
\begin{equation}
\label{2.11}
\RE a^\e(x,u)\overline{u}\geqslant -c_1 \e \eta^{-n+1}(\e)
|u|^2,\qquad x\in\p\tht_R^\e,
\end{equation}
where $c_1$ is some fixed constant independent of $\e$ and $\eta$;  this constant can be   positive or negative or zero.

In this paper we study the following boundary value problem:
\begin{equation}\label{2.7}
(\cL-\l)u_\e=f\quad\text{in}\quad \Om^\e,\qquad u_\e=0\quad\text{on} \quad \p\Om\cup\p\tht_D^\e,\qquad  \frac{\p u_\e}{\p\boldsymbol{\nu}} + a^\e(x,u_\e)=0\quad\text{on}\quad \p\tht_R^\e.
\end{equation}
Here $\cL$ is a differential expression:
\begin{equation}\label{2.8}
\cL:=-\sum\limits_{i,j=1}^{n} \frac{\p\ }{\p x_i} A_{ij}^\e \frac{\p\ }{\p x_j}  + \sum\limits_{j=1}^{n} A_j\frac{\p\ }{\p x_j}  + A_0,
\end{equation}
$f\in L_2(\Om^\e)$ is an arbitrary function, $\l\in\mathds{C}$ is a fixed constant
and the co-normal derivative is defined as
\begin{equation*}
\frac{\p\ }{\p\boldsymbol{\nu}} = \sum\limits_{i,j=1}^{n} A_{ij}^\e \nu_i \frac{\p\ }{\p x_j},
\end{equation*}
where $\nu_i$ are the components of the unit normal to $\p\tht^\e_R$ directed inside $\tht_R^\e$.

The main aim of our study is to describe the behavior of solution to problem (\ref{2.7})  as $\e\to+0$. A solution is understood in the generalized sense. Given a domain $Q\subseteq\mathds{R}^n$ and a manifold $S\subset \overline{Q}$ of codimension one, let $\Ho_2^1(Q,S)$ be the subspace of the Sobolev space $W_2^1(Q)$ formed by the functions with the zero trace on the manifold $S$. A generalized solution to problem (\ref{2.7})
 is  a function $u\in \Ho_2^1(\Om^\e,\p\Om\cup\p\tht_D^\e)$  satisfying the integral identity
\begin{equation}\label{2.18}
\hf_a(u_\e,v)-\l(u_\e,v)_{L_2(\Om^\e)}=(f,v)_{L_2(\Om^\e)}
\end{equation}
for each $v\in \Ho_2^1(\Om^\e,\p\Om\cup\p\tht_D^\e)$, where
\begin{align*}
&\hf_a(u,v):=\hf_A(u,v) + (a^\e(\,\cdot\,,u),v)_{L_2(\p\tht_R^\e)},
\\
&
\hf_A(u,v):= \sum\limits_{i,j=1}^{n} \left(A_{ij}^\e \frac{\p u}{\p x_j}, \frac{\p u}{\p x_i}\right)_{L_2(\Om^\e)} + \sum\limits_{j=1}^{n} \left(A_j^\e \frac{\p u}{\p x_j}, v\right)_{L_2(\Om^\e)}
  + (A_0^\e u,v)_{L_2(\Om^\e)}.
\end{align*}

Now we are going to formulate exact conditions on the geometry of perforation. By  $B_r(M)$ we denote a ball in $\mathds{R}^n$ of a radius $r$ centered at a point $M$. In the vicinity of the boundaries $\p\om_{k,\e}$ we introduce a local variable, which is the distance $\tau$ measured along the normal to $\p\om_{k,\e}$ directed outside $\om_{k,\e}$. Our first assumption concerns the general structure of the perforation.

\begin{enumerate}
\def\theenumi{{A\arabic{enumi}}}
\item\label{A1} The points $M_k^\e$ and the domains $\om_{k,\e}$ obey the conditions
\begin{equation}
B_{R_1}(y_{k,\e})\subseteq \om_{k,\e}\subseteq B_{R_2}(0),
\quad
B_{\e R_3}(M_k^\e)\cap B_{\e R_3}(M_j^\e)=\emptyset, \quad
 \dist(M_k^\e,\p\Om)\geqslant R_3\e,\quad
 k\ne j,\quad k,j\in\mathds{M}^\e,\label{2.2}
\end{equation}
where $y_{k,\e}$ are some points  and $R_1<R_2<R_3$ are some fixed constants independent of $\e$, $\eta$, $k$ and $j$.
The sets $B_{R_2}(0)\setminus \om_{k,\e}$ are connected. For each $k\in\mathds{M}_R^\e$ there exist local variables $s$ on $\p\om_{k,\e}$ such that the variables $(\tau,s)$ are  well-defined at least on $\{x\in\mathds{R}^n\setminus\om_{k,\e}:\, \dist(x,\p\om_{k,\e})\leqslant \tau_0\} \subseteq B_{R_2}(0)$, where $\tau_0$ is a fixed constant independent of $k\in\mathds{M}^\e$ and $\e$ and the Jacobians corresponding to passing from variables $x$ to $(\tau,s)$ are separated from zero and bounded from above uniformly in
 $\e$, $k\in\mathds{M}^\e$ and $x$ as $0\leqslant \tau\leqslant \tau_0$. The first derivatives of $x$ in $(\tau,s)$ and of $(\tau,s)$ in $x$ are continuous and bounded uniformly  in
 $\e$, $k\in\mathds{M}^\e$ and $x$ as $0\leqslant \tau\leqslant \tau_0$.
\end{enumerate}

It is known that the behavior of a solution to problem (\ref{2.7}), (\ref{2.8}) is very sensible to the geometry of perforation and the structure of the coefficients in the equation and boundary conditions. In this paper we consider just one of several possible typical cases, namely, the case when $u_\e$ vanishes as $\e\to+0$. Such situation is ensured by the following additional conditions.

\begin{enumerate}
\def\theenumi{{A\arabic{enumi}}}\setcounter{enumi}{1}

\item\label{A6} The set $\mathds{M}_{R}^\e$ contains a subset $\mathds{M}_{R,0}^\e$ such that
\begin{equation}
 \RE a^\e(x,u)\overline{u}\geqslant \mu(\e) \a_k^\e(x)|u|^2, \qquad x\in\p\om_k^\e,\quad k\in\mathds{M}_{R,0}^\e,\label{2.25}
\end{equation}
where $\mu=\mu(\e)\geqslant 1$ and $\a_k^\e=\a_k^\e(x)$
are some measurable functions obeying the conditions
\begin{equation}\label{2.13a}
\begin{aligned}
&\a_k^\e\in L_\infty(\p\om_k^\e),\qquad \a_k^\e\geqslant 0\quad\text{a.e. on $\p\om_k^\e$},
\\
&\|\a_k^\e\|_{L_2(\p\om_k^\e)}^2\leqslant  c_2  (\e\eta)^{n-1},\qquad \|\a_k^\e\|_{L_1(\p\om_k^\e)}\geqslant  c_3  (\e\eta)^{n-1},
\end{aligned}
\end{equation}
where  $c_2$, $c_3$ are some fixed positive constants independent of $k\in\mathds{M}_{R,0}^\e$, $\e$ and $\eta$.

\item\label{A5} There exists a fixed constant $R_4>0$ independent of $\e$ and $k$ such that
\begin{equation}\label{2.21}
\Om\subseteq\bigcup\limits_{k\in\mathds{M}_{R,0}^\e\cup \mathds{M}_D^\e}
B_{\e R_4}(M_k^\e).
\end{equation}
\end{enumerate}

We denote:
\begin{equation*}
\vk(\e):=|\ln\eta(\e)|+1\quad\text{as}\quad n=2,\qquad
\vk(\e):=1\quad\text{as}\quad n\geqslant 3.
\end{equation*}

Now we are in position to formulate our main result.

\begin{theorem}\label{th1}
Let Conditions~\ref{A1},~\ref{A6},~\ref{A5} be satisfied
 and
\begin{equation}\label{2.14}
\e \eta^{-n+1}(\e) \mu^{-1}(\e) + \e^2\eta^{-n+2}(\e)\vk(\e) \to+0,\qquad \e\to+0.
\end{equation}
Then there exists a fixed $\l_0\in\mathds{R}$ independent of $\e$ such that as $\RE\l\leqslant \l_0$, problem (\ref{2.7}), (\ref{2.8})  is solvable for each $f\in L_2(\Om^\e)$ and each of its solutions satisfies the estimates:
\begin{align}\label{2.16}
&\|u_\e\|_{W_2^1(\Om^\e)}\leqslant C\big(\e^\frac{1}{2} \eta^{-\frac{n}{2}+\frac{1}{2}}(\e)\mu^{-\frac{1}{2}}(\e) + \e \eta^{-\frac{n}{2}+1}(\e)\vk(\e)\big)
 \|f\|_{L_2(\Om^\e)},
\\
\label{2.17}
&\|u_\e\|_{L_2(\Om^\e)}\leqslant C \big(\e \eta^{-n+1}(\e)\mu^{-1}(\e) + \e^2\eta^{-n+2}(\e)\vk(\e)\big) \|f\|_{L_2(\Om^\e)},
\end{align}
where $C$ is some constant independent of $\e$ and $f$. These estimates are order sharp.
\end{theorem}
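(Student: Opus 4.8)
The plan is to derive everything from two independent ingredients: a global coercivity (energy) identity obtained by testing (\ref{2.18}) with $v=u_\e$, and a local Poincar\'e-type inequality on each cell $B_{\e R_4}(M_k^\e)$ that quantifies the ``spectral solidifying'' effect of a Dirichlet or a sign-definite Robin cavity. These two are then combined, and Condition (\ref{2.14}) is exactly what forces the resulting prefactors to zero.

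First I would test (\ref{2.18}) with $v=u_\e$ and take real parts. The ellipticity (\ref{2.5}) produces $c_0\|\nabla u_\e\|^2_{L_2(\Om^\e)}$; the first- and zero-order terms of $\hf_A$ are estimated by Cauchy--Schwarz and Young's inequality and absorbed by choosing $\l_0$ sufficiently negative. The boundary term splits: on $k\in\mathds{M}_{R,0}^\e$ the sign-definiteness (\ref{2.25}) retains the \emph{nonnegative} contribution $\mu\sum_{k\in\mathds{M}_{R,0}^\e}\int_{\p\om_k^\e}\a_k^\e|u_\e|^2\,dS$, while on the remaining Robin cavities the possibly negative contribution is controlled by (\ref{2.11}), whose prefactor $\e\eta^{-n+1}$ is precisely the scale at which a uniform scaled trace inequality makes $\int_{\p\tht_R^\e}|u_\e|^2$ subordinate to $\|u_\e\|^2_{L_2(\Om^\e)}$ and $\e^2\eta^{-n+2}\|\nabla u_\e\|^2$; the former is absorbed by $\l_0$ and the latter by $c_0\|\nabla u_\e\|^2$ for small $\e$ using (\ref{2.14}). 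This yields
\[
\|\nabla u_\e\|_{L_2(\Om^\e)}^2 + \mu\!\!\sum_{k\in\mathds{M}_{R,0}^\e}\!\!\int_{\p\om_k^\e}\a_k^\e|u_\e|^2\,dS \;\leqslant\; C\,\|f\|_{L_2(\Om^\e)}\,\|u_\e\|_{L_2(\Om^\e)}.
\]
The same testing together with the Lipschitz bound (\ref{2.6}) supplies the a priori estimate for solvability; existence of a generalized solution then follows from a Galerkin approximation combined with this bound, or equivalently from the surjectivity of the coercive, pseudomonotone operator generated by $\hf_a$ (the nonlinear boundary term being a continuous compact perturbation via compactness of the trace map).

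The heart of the matter is the local inequality. After rescaling $x=M_k^\e+\e y$, each cell becomes a fixed ball $B_{R_4}(\cdot)$ containing a cavity of relative size $\eta$, and I would prove
\[
\int_{B_{R_4}}|u|^2\,dy \;\leqslant\; C\,\eta^{-n+2}\vk\!\int_{\text{cell}}\!|\nabla_y u|^2\,dy \;+\; C\,\eta^{-n+1}\!\int_{\p\tilde\om_k}\!\tilde\a_k|u|^2\,dS_y,
\]
the second term being present only for $k\in\mathds{M}_{R,0}^\e$ (for $k\in\mathds{M}_D^\e$ one has $u=0$ on $\p\tilde\om_k$ and only the first term survives). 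The proof splits $u$ into its mean over the cell and the remainder; the remainder is handled by the ordinary Poincar\'e inequality, while the mean is estimated through the capacity of the cavity: the Dirichlet (or sign-definite Robin) condition forces $|\bar u|^2$ to be controlled by $\int|\nabla u|^2$ divided by the capacity of $\tilde\om_k$, which behaves like $\eta^{n-2}$ for $n\geqslant 3$ and like $\vk^{-1}=|\ln\eta|^{-1}$ for $n=2$, and this is exactly where the factor $\eta^{-n+2}\vk$ is born. In the Robin case the mass condition $\|\a_k^\e\|_{L_1}\geqslant c_3(\e\eta)^{n-1}$ of (\ref{2.13a}) lets me replace ``$u$ vanishes on the cavity'' by ``$u$ is small in the $\a$-weighted boundary norm'', producing the boundary term with weight $\eta^{-n+1}$, while the companion bound $\|\a_k^\e\|_{L_2}^2\leqslant c_2(\e\eta)^{n-1}$ guarantees the requisite trace estimates are uniform in $k$. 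Condition~\ref{A1} makes all these constants uniform in $k\in\mathds{M}^\e$ and $\e$. I expect this step, in particular the sharp capacitary constant and the delicate weighted Robin estimate with the minimal-mass weight, to be the main obstacle.

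Finally I would assemble the global bound. Condition~\ref{A5} guarantees that the balls $B_{\e R_4}(M_k^\e)$ with $k\in\mathds{M}_{R,0}^\e\cup\mathds{M}_D^\e$ cover $\Om$, while the disjointness in (\ref{2.2}) bounds their overlap multiplicity; summing the local inequalities, rescaled back to the $x$-variables where the factors become $\e^2\eta^{-n+2}\vk$ and $\e\eta^{-n+1}$, gives
\[
\|u_\e\|_{L_2(\Om^\e)}^2 \leqslant C\e^2\eta^{-n+2}\vk\,\|\nabla u_\e\|_{L_2(\Om^\e)}^2 + C\e\eta^{-n+1}\!\!\sum_{k\in\mathds{M}_{R,0}^\e}\!\!\int_{\p\om_k^\e}\a_k^\e|u_\e|^2\,dS.
\]
Writing the right-hand side through the energy $\mathcal{E}:=\|\nabla u_\e\|^2+\mu\sum\int\a_k^\e|u_\e|^2$, so that the Robin coefficient becomes $\e\eta^{-n+1}\mu^{-1}$, and inserting the coercivity bound $\mathcal{E}\leqslant C\|f\|\,\|u_\e\|$, I obtain $\|u_\e\|^2\leqslant C(\e^2\eta^{-n+2}\vk+\e\eta^{-n+1}\mu^{-1})\,\|f\|\,\|u_\e\|$, which is (\ref{2.17}). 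Estimate (\ref{2.16}) then follows from $\|\nabla u_\e\|^2\leqslant\mathcal{E}\lesssim\|f\|\,\|u_\e\|$, bounded by the same quantity times $\|f\|$, whence the square-root rate; this is the ``twice better'' relationship between (\ref{2.17}) and (\ref{2.16}) (the precise power of $\vk$ in dimension two being a secondary logarithmic refinement). The order sharpness I would establish separately by exhibiting explicit, essentially periodic configurations of balls with constant $\a_k^\e$ for which both displayed inequalities are saturated.
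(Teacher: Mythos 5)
Your skeleton coincides with the paper's: the same energy inequality retaining the nonnegative term $\mu\sum_{k\in\mathds{M}_{R,0}^\e}(\a_k^\e u_\e,u_\e)_{L_2(\p\om_k^\e)}$, the same mean-plus-remainder local cell estimate with the weights $\eta^{-n+2}\vk$ and $\eta^{-n+1}$ (your rescaled inequality is exactly the paper's (\ref{4.9}) divided by $\e^n$), the same covering with finite overlap multiplicity from (\ref{2.2}), and the same two-step extraction of (\ref{2.17}) and then (\ref{2.16}) from the combined bound. But there are genuine gaps, concentrated exactly where you yourself say you ``expect the main obstacle.'' Your capacity heuristic for the cell mean is correct for balls but is not a proof for the \emph{arbitrary} cavity shapes admitted by Assumption~\ref{A1}: what is needed is a Poincar\'e-type inequality on $B_{R_3}(0)\setminus\om_{k,\e}$ with a constant uniform in $k$ and $\e$ even though these domains vary without any shape restriction, and the statement ``Condition~\ref{A1} makes all these constants uniform'' is precisely what must be proved. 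The paper devotes Lemmata~\ref{lm3.0},~\ref{lm3.7},~\ref{lm3.8} to this: a chain-of-cubes argument propagating an $L_2$ bound along lattice paths through the connected complement of the cavity, and an auxiliary Neumann problem $X_{k,\e}$ in (\ref{3.19}) whose uniform $W_2^1$ bounds control the mean $\la u\ra$ via integration by parts. This is not a technicality one may defer: the paper notes that the earlier Cheeger-based shortcut for exactly this uniformity claim contained a gap and had to be replaced. A second concrete omission is in your summation over the covering: a ball $B_{\e R_4}(M_k^\e)$ can intersect \emph{other} cavities $\om_j^\e$, $j\ne k$, so the sets $B_{\e R_4}(M_k^\e)\setminus\tht^\e$ are not the clean one-cavity cells your local inequality is stated for; the paper must first build a controlled $W_2^1$-continuation of $u_\e$ inside all cavities (zero in the Dirichlet ones, $\la u_\e\ra+u_\e^\bot$ reflected with a cut-off in the Robin ones, with the norm bounds (\ref{4.14})) before the local estimates can be glued.

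Two further points. The order sharpness is part of the statement of Theorem~\ref{th1}, and your one-sentence plan (``periodic configurations saturating the bounds'') is the right idea but not a proof; in the paper this occupies half the proof and requires the periodic cell problems, the expansion $v_\mu=c_4(\e\mu)^{-1}+v_1+O(\e\mu)$, the Green-function correctors $G_n$ for the regime $\eta\to0$, and the averaging Lemma~\ref{lm3.5}, including a cancellation via the normalization (\ref{3.21}) without which the gradient asymptotics would be wrong. Finally, your solvability argument via ``compactness of the trace map'' fails when $\Om$ is unbounded (the paper allows $\Om=\mathds{R}^n$ with countably many cavities, for which the trace into $L_2(\p\tht_R^\e)$ is not compact); the paper instead verifies only continuity along lines (from the Lipschitz bound (\ref{2.6})) and coercivity (from (\ref{2.11}), Lemma~\ref{lm3.1}, and the smallness $\e^2\eta^{-n+2}\vk\to0$ supplied by (\ref{2.14})) and invokes Browder--Minty-type solvability results, which avoids compactness altogether.
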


\begin{theorem}\label{th2}
Let Assumptions~\ref{A1},~\ref{A5} be satisfied, the set $\mathds{M}_{R,0}^\e$ be empty,  and
\begin{equation}\label{2.31}
  \e^2\eta^{-n+2}(\e)\vk(\e) \to+0,\qquad \e\to+0.
\end{equation}
Then there exists a fixed $\l_0\in\mathds{R}$ independent of $\e$ such that as $\RE\l\leqslant \l_0$, problem (\ref{2.7}), (\ref{2.8})  is solvable for each $f\in L_2(\Om^\e)$ and each of its solutions satisfies the estimates:
\begin{align}\label{2.29}
&\|u_\e\|_{W_2^1(\Om^\e)}\leqslant C  \e \eta^{-\frac{n}{2}+1}(\e) \vk^{\frac{1}{2}}(\e)
 \|f\|_{L_2(\Om^\e)},
\\
\label{2.30}
&\|u_\e\|_{L_2(\Om^\e)}\leqslant C   \e^2\eta^{-n+2}(\e) \vk (\e) \|f\|_{L_2(\Om^\e)},
\end{align}
where $C$ is some constant independent of $\e$ and $f$. These estimates are order sharp.
\end{theorem}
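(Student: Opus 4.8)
The plan is to reduce everything to two uniform local estimates on a single rescaled cell and then close a global energy inequality; the $\mu$‑mechanism of Theorem~\ref{th1} is absent here (the set $\mathds{M}_{R,0}^\e$ is empty), so the vanishing is driven purely by the Dirichlet cavities via Condition~\ref{A5}. First I would establish a capacitary Poincar\'e inequality. By (\ref{2.21}) with $\mathds{M}_{R,0}^\e=\emptyset$ the balls $B_{\e R_4}(M_k^\e)$, $k\in\mathds{M}_D^\e$, cover $\Om$, and each such ball carries a Dirichlet cavity on which $u_\e$ vanishes. Rescaling $x\mapsto(x-M_k^\e)\e^{-1}$ turns the cell into a fixed ball containing a cavity of size $\eta$ subject to the Dirichlet condition, whose lowest mixed Dirichlet--Neumann eigenvalue behaves like the relative capacity of the cavity, i.e.\ like $\eta^{n-2}$ for $n\ge3$ and like $\vk^{-1}=(|\ln\eta|+1)^{-1}$ for $n=2$. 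Scaling back gives the cell bound
\[
\|u_\e\|_{L_2(B_{\e R_4}(M_k^\e)\cap\Om^\e)}^2\le C\e^2\eta^{-n+2}\vk\,\|\nabla u_\e\|_{L_2(B_{\e R_4}(M_k^\e)\cap\Om^\e)}^2 .
\]
Summing over the covering, with bounded overlaps guaranteed by the $\e R_3$‑separation in Condition~\ref{A1}, yields the global inequality $\|u_\e\|_{L_2(\Om^\e)}^2\le C_P\|\nabla u_\e\|_{L_2(\Om^\e)}^2$ with $C_P:=C\e^2\eta^{-n+2}\vk$.

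Next I would control the Robin boundary integral. Using the normal coordinates $(\tau,s)$ from Condition~\ref{A1} on the collar of width $\sim\e\eta$ around each $\p\om_k^\e$, a one–dimensional trace inequality gives $\int_{\p\om_k^\e}|u_\e|^2\,ds\le C(\e\eta)^{-1}\|u_\e\|^2_{L_2(\mathrm{collar})}+C\e\eta\|\nabla u_\e\|^2_{L_2(\mathrm{collar})}$; multiplying by $\e\eta^{-n+1}$ produces a harmless gradient piece $C\e^2\eta^{-n+2}\|\nabla u_\e\|^2$ and the delicate term $C\eta^{-n}\|u_\e\|^2_{L_2(\mathrm{collar})}$. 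The heart of this step is a collar–to–cell estimate. Since the collar lies inside the disjoint ball $B_{\e R_3}(M_k^\e)$, splitting $u_\e$ into its mean $\bar u$ over that ball plus a remainder and applying, for $n\ge3$, H\"older together with the Sobolev--Poincar\'e inequality $\|u_\e-\bar u\|_{L_{2^{*}}}\le C\|\nabla u_\e\|_{L_2}$ (and, for $n=2$, the borderline $L_p$ estimate with the choice $p\sim|\ln\eta|$, which is exactly where the factor $\vk$ is generated) gives $\eta^{-n}\|u_\e\|^2_{L_2(\mathrm{collar})}\le C\|u_\e\|^2_{L_2(B_{\e R_3}(M_k^\e))}+C\e^2\eta^{-n+2}\vk\,\|\nabla u_\e\|^2_{L_2(B_{\e R_3}(M_k^\e))}$. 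Summing over $k\in\mathds{M}_R^\e$ and using that the balls $B_{\e R_3}(M_k^\e)$ are pairwise disjoint (Condition~\ref{A1}), I arrive at
\[
\e\eta^{-n+1}\|u_\e\|^2_{L_2(\p\tht_R^\e)}\le C\|u_\e\|^2_{L_2(\Om^\e)}+C\e^2\eta^{-n+2}\vk\,\|\nabla u_\e\|^2_{L_2(\Om^\e)} .
\]

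Now I would test the identity (\ref{2.18}) with $v=u_\e$ and take real parts. Ellipticity (\ref{2.5}) gives $\RE\hf_A(u_\e,u_\e)\ge c_0\|\nabla u_\e\|^2-C\|\nabla u_\e\|\,\|u_\e\|-C\|u_\e\|^2$, while (\ref{2.11}) bounds the Robin contribution below by $-c_1\e\eta^{-n+1}\|u_\e\|^2_{L_2(\p\tht_R^\e)}$ (if $c_1\le0$ this term is nonnegative and is simply discarded). Inserting the bound from the previous paragraph and using $\e^2\eta^{-n+2}\vk\to0$ from (\ref{2.31}) to absorb all gradient pieces into $c_0\|\nabla u_\e\|^2$, together with Young's inequality on the cross term, yields $\RE\hf_a(u_\e,u_\e)\ge\frac{c_0}{4}\|\nabla u_\e\|^2-C'\|u_\e\|^2$ for all small $\e$. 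Since the left–hand side equals $\RE\l\,\|u_\e\|^2+\RE(f,u_\e)_{L_2(\Om^\e)}$, for $\RE\l\le\l_0$ this gives $\frac{c_0}{4}\|\nabla u_\e\|^2\le(C'+\l_0)\|u_\e\|^2+\|f\|\,\|u_\e\|$. Feeding in the Poincar\'e inequality $\|u_\e\|^2\le C_P\|\nabla u_\e\|^2$ with $C_P\to0$ absorbs the first term for small $\e$, leaving $\|\nabla u_\e\|^2\le C\|f\|\,\|u_\e\|\le C C_P^{1/2}\|f\|\,\|\nabla u_\e\|$. Hence $\|\nabla u_\e\|\le C C_P^{1/2}\|f\|$ and, by Poincar\'e again, $\|u_\e\|\le C_P^{1/2}\|\nabla u_\e\|\le C C_P\|f\|$; recalling $C_P=C\e^2\eta^{-n+2}\vk$ these are precisely (\ref{2.29}) and (\ref{2.30}), the $L_2$ rate being the square of the $W_2^1$ one.

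Solvability for every $f$ would follow from the same coercivity: the a priori computation shows that the operator associated with $u\mapsto\hf_a(u,\cdot)-\l(u,\cdot)$ is coercive on $\Ho_2^1(\Om^\e,\p\Om\cup\p\tht_D^\e)$, and, being continuous and—thanks to the Lipschitz bound (\ref{2.6}) and compactness of the trace—pseudomonotone, it is surjective by the standard theory of coercive pseudomonotone operators (equivalently, by a Galerkin approximation combined with the a priori bound). Order sharpness I would exhibit by explicit examples, e.g.\ a periodic array of Dirichlet ball–cavities in a cube with $f$ a fixed smooth function, for which matched asymptotics produce a solution whose $L_2$– and $W_2^1$–norms have exactly the orders $\e^2\eta^{-n+2}\vk$ and $\e\eta^{-n/2+1}\vk^{1/2}$, the logarithmic factor in dimension two reflecting the logarithmic capacity. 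I expect the principal obstacle to be the collar–to–cell estimate above: proving it uniformly over all admissible cavity shapes and capturing the correct $\vk$ when $n=2$ is the technical core, and it is exactly for this that the geometric Conditions~\ref{A1} and~\ref{A5} are designed (they are what make the "spectral solidifying" local estimates available).
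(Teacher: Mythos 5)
Your overall architecture coincides with the paper's: a local Poincar\'e inequality on the Dirichlet cells summed over the covering from Condition~\ref{A5} with bounded overlap (the paper's Lemma~\ref{lm3.4} plus the cube-counting argument), a trace-type bound reducing the Robin boundary term to a gradient piece absorbable via (\ref{2.31}) plus bulk $L_2$-norms over the disjoint balls $B_{\e R_3}(M_k^\e)$ (the paper's Lemma~\ref{lm3.1} gives verbatim your displayed inequality after summation), the identical energy closure $\|\nabla u_\e\|\leqslant C C_P^{1/2}\|f\|$, $\|u_\e\|\leqslant C C_P\|f\|$ with $C_P=C\e^2\eta^{-n+2}\vk$, solvability by Browder--Minty-type theory, and sharpness via the periodic Dirichlet example (the paper reuses the functions $u_D$ built from $v_0$ and $v_4$ in the proof of Theorem~\ref{th1}). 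You differ in how the two local lemmas are proved: the paper proves the cell Poincar\'e elementarily (continue $u_\e$ by zero into the cavity, which contains a ball of radius $\e\eta R_1$ by Condition~\ref{A1}, then integrate radially --- this is exactly where the factor $(\e\eta)^{-n+2}\vk$ arises), and proves the trace lemma via a chain-of-cubes connectivity argument (Lemma~\ref{lm3.0}) that replaces Cheeger's inequality; your capacity/eigenvalue route and your H\"older--Sobolev--Poincar\'e argument with $p\sim|\ln\eta|$ for $n=2$ are viable substitutes and do produce the correct $\vk$.

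There is, however, one concrete gap. Your cell estimate treats $B_{\e R_4}(M_k^\e)\cap\Om^\e$ as a fixed ball containing a \emph{single} cavity of size $\eta$. That picture is valid only at radius $\e R_3$: by (\ref{2.2}) the balls $B_{\e R_3}(M_j^\e)$ are pairwise disjoint and each contains its own cavity, so $B_{\e R_3}(M_k^\e)$ meets only $\om_k^\e$; but pairwise disjoint balls cannot cover $\Om$, so necessarily $R_4>R_3$ and $B_{\e R_4}(M_k^\e)$ in general intersects many other cavities, including Robin cavities on which $u_\e$ neither vanishes nor is otherwise controlled. Your rescaled mixed Dirichlet--Neumann eigenvalue bound therefore does not apply as stated, and a uniform lower bound over arbitrary admissible multi-hole configurations requires a separate argument. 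The paper resolves this by first continuing $u_\e$ across \emph{all} cavities --- by zero into the Dirichlet ones, and into each Robin cavity by its mean $\la u_\e\ra$ from (\ref{4.1}) plus a reflected remainder in the collar coordinates of Condition~\ref{A1}, with $W_2^1$-bounds (\ref{4.14}) supplied by the mean-zero Poincar\'e inequality (\ref{3.9}) (itself resting on Lemmata~\ref{lm3.0},~\ref{lm3.8}) --- and only then performs the local estimates on full balls, splitting $B_{\e R_4}(M_k^\e)$ into $B_{\e R_3}(M_k^\e)$ plus an annulus handled by Lemma~\ref{lm3.4} with $R_3$ replaced by $R_4$, $\eta=1$ and $\om_{k,\e}$ replaced by $B_{R_2}(0)$. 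Alternatively a chain argument at scale $\e$, exploiting the uniform gaps of width at least $2\e(R_3-R_2)$ between cavities, would do; either way this step must be added. The same uniformity caveat applies to the Sobolev--Poincar\'e constant on $B_{\e R_3}(M_k^\e)\setminus\om_k^\e$ uniform in the cavity shape, which needs the collar regularity of Condition~\ref{A1}; you flag this correctly as the technical core, and it is precisely what the paper's Lemma~\ref{lm3.0} supplies.
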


Let us discuss briefly  the problem and main result. The first main feature is that the perforation we consider is of a general non-periodic structure. We only make  Assumption~\ref{A1}, which is quite natural. The first two-sided inclusion in (\ref{2.2}) means that the domains $\om_{k,\e}$ are roughly of the same sizes: each of these domain can be put inside a fixed ball $B_{R_2}(0)$ and inside each of these domain a fixed ball of the radius $R_1$ can be inscribed, see Figure~\ref{fig1}.  We stress that this does not impose any restrictions on the shapes of the domains $\om_{k,\e}$ and they indeed can be very arbitrary. In view of the inequality $R_2<R_3$, the second condition in (\ref{2.2}) means that the cavities $\om_k^\e$, defined in (\ref{2.4}), do not intersect and there is a minimal distance $2R_3$ between the points $M_k^\e$,
which can be regarded as a distance between the cavities. The third inequality in (\ref{2.2}) ensures that the cavities do not intersect the boundary of the domain $\p\Om$, which is also a natural assumption, see Figure~\ref{fig2}.  We note that these conditions do not imply that the distances between the points $M_k^\e$ are indeed of order $\sim \e$ since only the lower bounds for the distances are postulated. Some of the distances can be much larger than $\e$. It should also be said that the distribution of the points $M_k^\e$, and as a consequence, of the cavities $\om_k^\e$, is very weakly restricted by (\ref{2.2})
and it can be  very arbitrary.

\begin{figure}
\begin{center}
\includegraphics[scale=0.25]{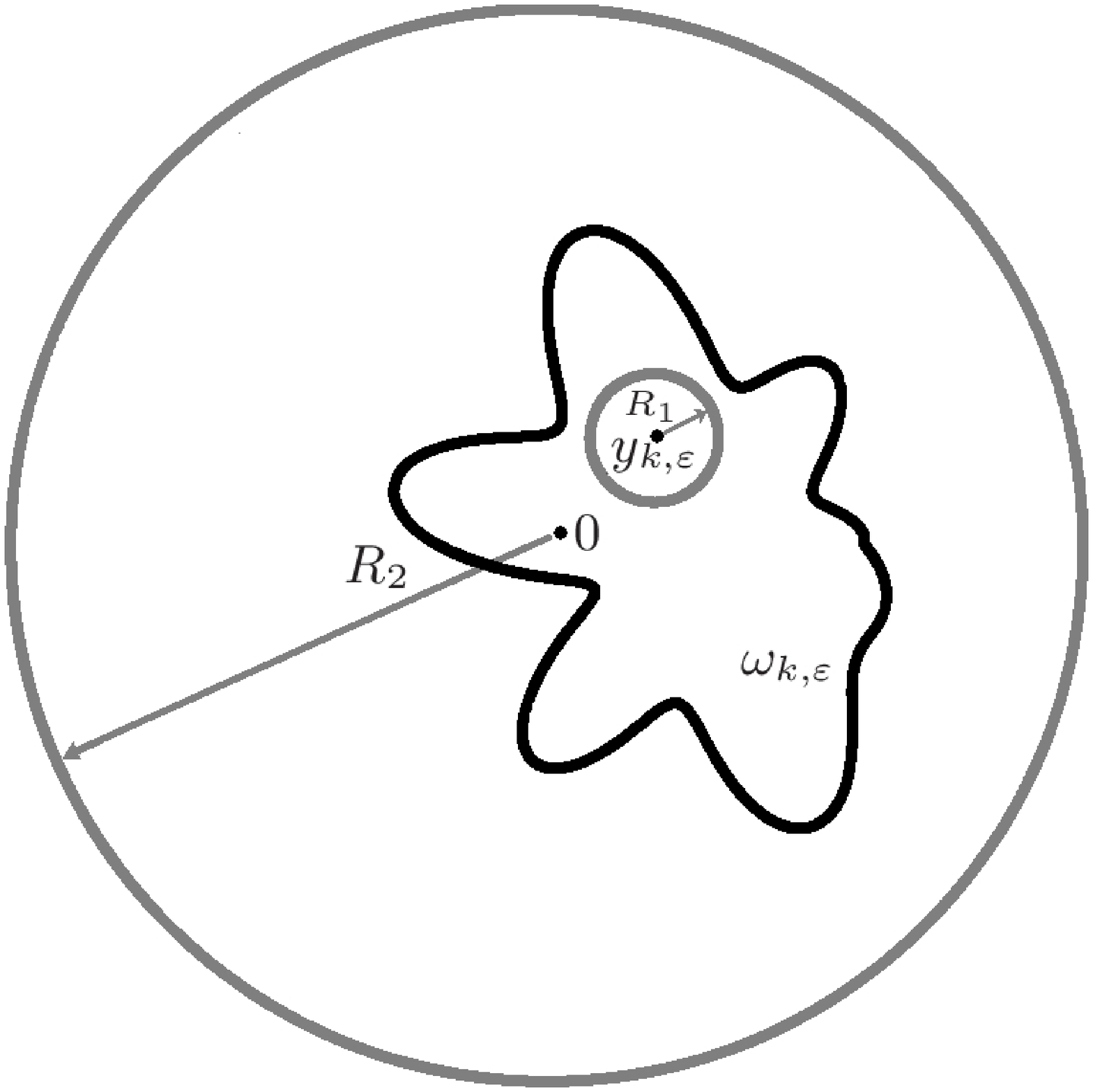}
\end{center}

\caption{Domain $\om_{k,\e}$ and corresponding balls $B_{R_1}(y_{k,\e})$ and $B_{R_2}(0)$}\label{fig1}
\end{figure}

The connectedness of sets $B_{R_2}(0)\setminus \om_{k,\e}$ is also a natural assumption.  Indeed, if this is not the case for some $k$, then the part $B_{\e\eta R_2(M_k^\e)}\setminus \om_k^\e$ of the domain $\Om^\e$ 
contains a small piece not connected with the rest of $\Om^\e$. Then the boundary value problem on this small piece becomes independent on the problem on the rest of $\Om^\e$ and
its solution do not influence the behavior of the solution on the rest of $\Om^\e$.
The assumption on the local variables in the vicinity of the boundaries $\p\om_{k,\e}$ for $k\in\mathds{M}_{R,0}^\e$ is more gentle and requires a certain uniform regularity of the shapes of boundaries $\p\om_{k,\e}$ with respect to $k$ and $\e$.  First, since the assumed smoothness of $\p\om_{k,\e}$ is $C^1$, there is a tangential hyperplane to $\p\om_{k,\e}$ at each point and the normal vector to this hyperplane is continuous. We additionally assume that the introduced local variables possess uniformly bounded continuous derivatives with respect to the initial Cartesian coordinates and the corresponding Jacobians are well-defined. Second, the uniform boundedness of these Jacobians is an additional regularity property. Roughly speaking, this regularity prohibits the situation when on some sequence of values $k$ and $\e$, the boundaries $\p\om_{k,\e}$ highly oscillates or approximates some non-smooth surfaces. We also conjecture that if this is the case, this could influence and change the behaviour of solutions to problem~(\ref{2.7}).

Problem~(\ref{2.7}) involves general linear differential expression $\cL$ defined in (\ref{2.8}), which is not formally symmetric due to the presence of the first derivatives and due to the complex coefficients. The coefficients in this expression can also depend on the parameter $\e$ and the only restrictions on the dependence are ones in (\ref{2.5}). These are very weak conditions and this is why the dependence of the coefficients on $\e$ can be very arbitrary including, for instance, a particular case of fast and non-periodically oscillating functions.

The Dirichlet boundary condition on $\p\Om$ is imposed just for the definiteness and can be replaced by any other classical boundary condition. The main feature are the conditions on the boundaries of the cavities: these are the Dirichlet and the nonlinear Robin conditions. The choice of type of the boundary conditions is in fact arbitrary and is  described by the partition of cavities in (\ref{2.8a}). The nonlinearity in the Robin condition should satisfy Lipschitz condition (\ref{2.6}), which means that it has at most a linear growth. One more condition is inequality (\ref{2.11}), which restricts the growth of the nonlinearity with respect to the small parameter $\e$.

Extra conditions are Assumptions~\ref{A6},~\ref{A5}. The former  states that at least for some cavities with the nonlinear Robin condition the nonlinearity is positive definite in the sense of inequality (\ref{2.25}), where the functions $\a_k^\e$ should obey (\ref{2.13a}). The estimates for the $L_1$- and $L_2$-norms of these functions can be equivalently rewritten as
\begin{equation*}
\|\a_{k,\e}\|_{L_2(\p\om_{k,\e})}^2\leqslant c_2,\qquad \|\a_{k,\e}\|_{L_1(\p\om_{k,\e})}\geqslant c_3,\qquad \a_{k,\e}(x):=\a_k^\e(M_k^\e+\e\eta x).
\end{equation*}
Here the spaces $L_1$ and $L_2$ are considered on non-small surfaces $\p\om_{k,\e}$ and these inequalities mean that the functions $\a_{k,\e}$ should be bounded in $L_2(\om_{k,\e})$-norms uniformly in $k$ and $\e$ and should have an $L_1(\om_{k,\e})$-norm uniformly separated from zero. It is clear that these conditions are very weak and are obeyed by a very wide class of functions $\a_{k,\e}$. The parameter $\mu=\mu(\e)$ in (\ref{2.25}) characterizes the dependence of the nonlinearity $a^\e$ on $\p\om_k^\e$ with respect to $\e$. This parameter behaves arbitrarily as $\e\to+0$, including a possible arbitrary growth.

\begin{figure}
\begin{center}
\includegraphics[scale=0.25]{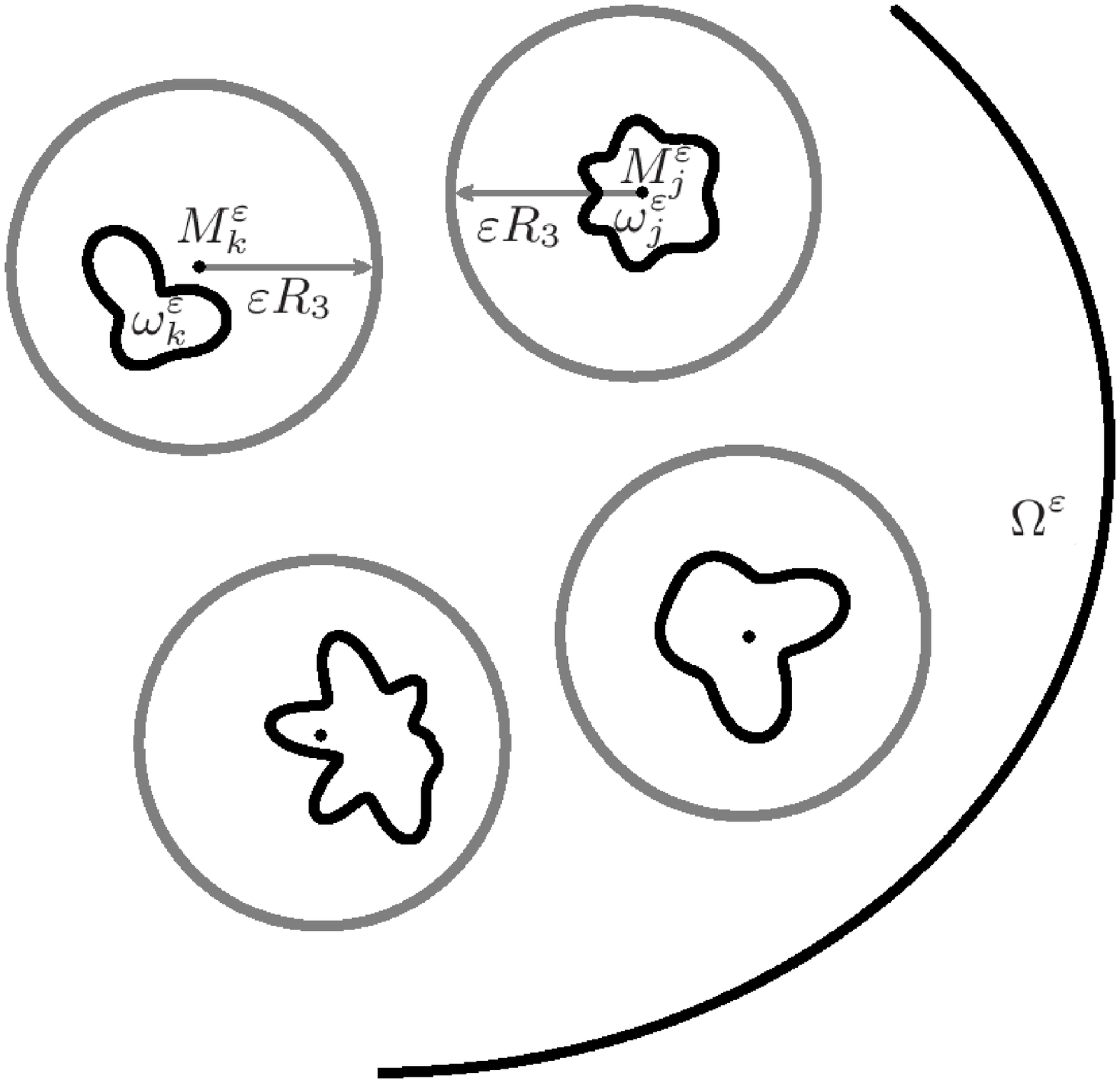}
\end{center}
\caption{Perforated domain $\Om^\e$ and cavities $\om_k^\e$}\label{fig2}
\end{figure}

Assumption~\ref{A5} has a very simple geometric interpretation: the cavities with the Dirichlet condition and the nonlinear Robin condition obeying Assumption~\ref{A6}   should be distributed quite dense to ensure  covering (\ref{2.21}) of the domain $\Om$, see Figure~\ref{fig3}. This assumption is used at a final step in the proofs of Theorems~\ref{th1},~\ref{th2}. Namely, we first make a series of certain local estimates for the solution $u_\e$ in balls $B_{\e R_4}(M_k^\e)$ and then we glue these estimates into global inequalities (\ref{2.16}), (\ref{2.17}), (\ref{2.29}), (\ref{2.30}).
Assumption~\ref{A6} allow us to cover in this way the entire domain $\Om^\e$; while doing this, by Assumption~\ref{A1} we also confirm that each point $x$ of $\Om^\e$ is covered by finitely many balls  $B_{\e R_4}(M_k^\e)$ and the number of such balls for each point $x$ is bounded uniformly in $\e$ and $x$.

Our main result states that provided convergence (\ref{2.14}) holds, problem~(\ref{2.7}) is solvable and each solution  tends to zero as $\e\to+0$. The convergence is established both in $W_2^1$- and $L_2$-norms and it is uniform in the $L_2$-norm of the right hand side in the equation.  A gentle point is  that we do not state a \textsl{unique} solvability and the situation of multiple solutions is not excluded. In the proof of Theorem~\ref{th1}, the solvability is ensured by Assumption~\ref{th1} and inequalities (\ref{2.6}), (\ref{2.11}); other assumptions and conditions are not used. Assumptions~\ref{A6},~\ref{A5} and convergence (\ref{2.14}) guarantee inequalities (\ref{2.16}), (\ref{2.17}). The first of them is for the $W_2^1(\Om^\e)$-norm  of the solution and it describes the rate, at which the solution vanishes as $\e\to+0$ uniformly in  the $L_2(\Om^\e)$-norm of the right hand side. The second inequality is of the same nature but now the $L_2(\Om^\e)$-norm of the solution is estimated. The second norm of the solution  is weaker than the norm in (\ref{2.16}) and the convergence rate is twice better.

If there is no cavities with the nonlinear Robin conditions obeying Assumption~\ref{A6} but the cavities with the Dirichlet condition still satisfy Assumption~\ref{A5}, this case can be also treated and the result is formulated in Theorem~\ref{th2}. It is similar to that in Theorem~\ref{th1}, but now convergence (\ref{2.14}) is replaced by (\ref{2.31}) and this is reflected in the convergence rates in (\ref{2.29}), (\ref{2.30}).

An  important feature of our main results is that the estimates in Theorems~\ref{th1},~\ref{th2} are order sharp. This is justified by appropriate examples provided in the proofs of these theorems.

\section{Proofs}

In this section we prove Theorems~\ref{th1},~\ref{th2}.

 \subsection{Auxiliary lemmata}

Here we prove a series of auxiliary lemmata, which play a crucial role in the proofs of Theorems~\ref{th1},~\ref{th2}.

\begin{lemma}\label{lm3.4}
Under Assumption~\ref{A1}
for each $k\in \mathds{M}_D^\e$ and each function $u\in \Ho^1(B_{\e R_3}(M_k^\e)\setminus \om_k^\e, \p\om_k^\e)$   the estimate
\begin{equation*}
\|u\|_{L_2(B_{\e R_3}(M_k^\e)\setminus \om_{k,\e})}^2 \leqslant C\e^2 \eta^{-n+2}\vk  \|\nabla u \|_{L_2(B_{\e R_3}(M_k^\e)\setminus \om_{k,\e})}^2
\end{equation*}
holds,
where $C$ is a constant independent of $k$, $\e$, $\eta$ and $u$.
\end{lemma}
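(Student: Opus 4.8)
The plan is to rescale to a fixed-size hole inside a large ball and then prove a capacity-type Poincar\'e inequality there, whose sharp constant carries the factor $\vk$ only when $n=2$. First I pass to $y=(x-M_k^\e)(\e\eta)^{-1}$ and set $\tilde u(y):=u(M_k^\e+\e\eta y)$. Then $B_{\e R_3}(M_k^\e)$ becomes $B_\rho(0)$ with $\rho:=R_3/\eta\geqslant R_3$, while $\om_k^\e$ becomes $\om_{k,\e}$, for which Assumption~\ref{A1} gives $B_{R_1}(y_{k,\e})\subseteq\om_{k,\e}\subseteq B_{R_2}(0)$. Since $\di x=(\e\eta)^n\di y$ and $\nabla_x=(\e\eta)^{-1}\nabla_y$, a direct substitution reduces the claimed estimate to the scale-free inequality
\[
\|\tilde u\|_{L_2(B_\rho(0)\setminus\om_{k,\e})}^2\leqslant C\vk\,\rho^{\,n}\,\|\nabla_y\tilde u\|_{L_2(B_\rho(0)\setminus\om_{k,\e})}^2,
\]
where I used $\rho^{\,n}\sim\eta^{-n}$, the constant $C$ must be independent of $k$, $\e$, $\eta$, and $\tilde u$ still vanishes on $\p\om_{k,\e}$.

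\emph{Local estimate near the hole.} Extending $\tilde u$ by zero into $\om_{k,\e}$ gives a function in $W_2^1(B_{R_*}(0))$, $R_*:=\max(4R_2,2R_3)$, that vanishes on $\om_{k,\e}\supseteq B_{R_1}(y_{k,\e})$, hence on a set $E$ with $|E|\geqslant m_0>0$ depending only on $R_1$, $n$. For any $v\in W_2^1(B_{R_*})$ vanishing on such $E$, the Poincar\'e--Wirtinger inequality and $v=0$ on $E$ give $|E|\,|\bar v|^2=\int_E|v-\bar v|^2\leqslant C\|\nabla v\|_{L_2(B_{R_*})}^2$ (with $\bar v$ the mean over $B_{R_*}$), and therefore $\|v\|_{L_2(B_{R_*})}^2\leqslant C\|\nabla v\|_{L_2(B_{R_*})}^2$ with $C=C(R_1,R_2,R_3,n)$ independent of the shape and position of $\om_{k,\e}$. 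Applied to $\tilde u$ this proves the scale-free inequality on the central ball and settles the case of bounded $\rho$.

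\emph{Dyadic shells and telescoping.} For $\rho>R_*$ I split $B_\rho\setminus B_{R_*}$ into shells $A_j:=B_{2^{j+1}}\setminus B_{2^j}$, $j_0\leqslant j\leqslant J$, with $2^{j_0}\sim R_*$ and $2^{J}\sim\rho$, so $J\sim\log_2(1/\eta)$. Writing $\bar u_j$ for the mean of $\tilde u$ over $A_j$, the scaled annular Poincar\'e inequality gives $\|\tilde u-\bar u_j\|_{L_2(A_j)}^2\leqslant C\,4^{j}\|\nabla_y\tilde u\|_{L_2(A_j)}^2$, while comparing means on the overlapping enlarged shells $\hat A_j:=B_{2^{j+2}}\setminus B_{2^j}$ yields $|\bar u_{j+1}-\bar u_j|^2\leqslant C\,2^{j(2-n)}g_j$, where $g_j:=\int_{\hat A_j}|\nabla_y\tilde u|^2$ and $\sum_j g_j\leqslant C\|\nabla_y\tilde u\|_{L_2}^2$ by bounded overlap. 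Telescoping and Cauchy--Schwarz give $|\bar u_J-\bar u_{j_0}|^2\leqslant\big(\sum_j 2^{j(1-n/2)}g_j^{1/2}\big)^2\leqslant\big(\sum_j 2^{j(2-n)}\big)\big(\sum_j g_j\big)$. This is exactly where the dimension enters: for $n\geqslant3$ the series $\sum_j 2^{j(2-n)}$ converges and $|\bar u_j|^2\leqslant C\|\nabla_y\tilde u\|_{L_2}^2$, whereas for $n=2$ one uses instead $\sum_{j=j_0}^{J}g_j^{1/2}\leqslant J^{1/2}(\sum_j g_j)^{1/2}$, producing the extra factor $J\sim|\ln\eta|$; combined with the local bound for $\bar u_{j_0}$ this gives $|\bar u_j|^2\leqslant C\vk\,\|\nabla_y\tilde u\|_{L_2}^2$ in every dimension. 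Summing $\int_{A_j}|\tilde u|^2\leqslant2\|\tilde u-\bar u_j\|_{L_2(A_j)}^2+2|A_j|\,|\bar u_j|^2$ over $j$, using $4^{j}\leqslant4^{J}\sim\rho^2\leqslant\rho^{\,n}$ and $\sum_j|A_j|\sim\rho^{\,n}$, together with the central estimate, produces the scale-free inequality, and undoing the rescaling proves the lemma.

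The main obstacle is the telescoping control of the shell means with the \emph{sharp} power of $\vk$: the weights in Cauchy--Schwarz must be chosen so that no logarithmic loss occurs for $n\geqslant3$, while the genuine logarithmic growth of the capacitary potential in the plane is retained for $n=2$. All constants are uniform in $k$ and $\e$ solely through the two-sided bound $B_{R_1}(y_{k,\e})\subseteq\om_{k,\e}\subseteq B_{R_2}(0)$ of Assumption~\ref{A1}, which is what makes both the local estimate and the annular constants $k$- and $\e$-independent.
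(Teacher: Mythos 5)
Your proof is correct, but it takes a genuinely different route from the paper's. The paper's argument is a one-step radial estimate: it extends $u$ by zero into $\om_k^\e$, notes via the first inclusion in Assumption~\ref{A1} that the extension vanishes on the inscribed ball $B_{\e\eta R_1}(M_k^\e+\e\eta y_{k,\e})$, writes $u(x)$ as the integral of $\p u/\p r$ along the ray from that ball, and a single Cauchy--Schwarz with the weight $r^{n-1}$ produces the factor $\int_{\e\eta R_1}^{\e R_3} r^{1-n}\,dr\leqslant C(\e\eta)^{-n+2}\vk$ at once; integrating over the ball then gives the lemma in a few lines, with the dichotomy $n=2$ versus $n\geqslant 3$ read off directly from the explicit radial weight. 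You instead rescale to a unit-size hole in $B_\rho(0)$, $\rho=R_3/\eta$, prove a zero-set Poincar\'e bound near the hole, and recover the growth $\rho^n$ (and the extra $|\ln\eta|$ for $n=2$) through dyadic shells and telescoping of shell means --- in effect rebuilding discretely the capacitary behaviour that the paper extracts from the one-dimensional integral. Your computations are sound: the annular Poincar\'e scaling $4^j$, the mean-comparison bound $|\bar u_{j+1}-\bar u_j|^2\leqslant C2^{j(2-n)}g_j$, the convergent geometric series for $n\geqslant3$ versus the $J\sim|\ln\eta|$ loss for $n=2$, and the final bookkeeping all check out and reproduce exactly $C\e^2\eta^{-n+2}\vk$ after undoing the rescaling. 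What your route buys is robustness: it uses only a measure lower bound on the zero set and bounded-overlap annular estimates, so it would survive if the inscribed-ball hypothesis were weakened to a capacity-type lower bound on $\om_{k,\e}$; the paper's route is shorter and more elementary. Two harmless loose ends in your write-up: for $\rho\leqslant R_*$ (i.e.\ $\eta$ bounded below) you apply the $B_{R_*}$ inequality to $\tilde u$, which is defined only on $B_\rho$ --- just run the same mean-value argument on $B_\rho$ itself, with the constant uniform since $\rho\in[R_3,R_*]$; and the outermost enlarged shell $\hat A_J$ may protrude beyond $B_\rho$, so it should be truncated, the annular constants for $B_s\setminus B_{2^J}$ with $2^{J+1}\leqslant s\leqslant 2^{J+2}$ being uniform. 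Neither affects the result or the powers of $\e$, $\eta$, $\vk$.
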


\begin{proof}
Given a function $u\in \Ho^1(B_{\e R_3}(M_k^\e)\setminus \om_k^\e, \p\om_k^\e)$, we continue it by zero inside $\om_k^\e$. Then the continuation is an element of $\Ho^1(B_{\e R_3}(M_k^\e))$, its $L_2(B_{\e R_3}(M_k^\e))$-norm and $W_2^1(B_{\e R_3}(M_k^\e))$-norm coincide with
$L_2(B_{\e R_3}(M_k^\e)\setminus \om_k^\e)$-norm and $W_2^1(B_{\e R_3}(M_k^\e)\setminus \om_k^\e)$-norm of the original function $u$, and in view of the first condition in (\ref{A1}), the continuation vanishes at least on the ball $B_{\e\eta R_1}(M_k^\e+\e\eta y_{k,\e})$. We keep the same notation $u$ for this continuation. By $r$ we denote the radius for spherical coordinates centered at the point $M_k^\e+\e\eta y_{k,\e}$. Then for all $x\in B_{\e R_3}(M_k^\e)\setminus B_{\e\eta R_1}(M_k^\e+\e\eta y_{k,\e})$ by the Cauchy-Schwarz inequality we have:
\begin{align*}
|u(x)|^2=&\Bigg|\int\limits_{|x-M_k^\e -\e\eta y_{k,\e}|}^{\e\eta R_1} \frac{\p u}{\p r}\,dr\Bigg|^2
\leqslant \int\limits_{\e\eta R_1}^{|x-M_k^\e -\e\eta y_{k,\e}|} \frac{dr}{r^{n-1}}
 \int\limits_{\e\eta R_1}^{|x-M_k^\e -\e\eta y_{k,\e}|}  |\nabla u|^2 r^{n-1}\,dr
 \\
 \leqslant &
 C(\e\eta)^{-n+2}\vk \int\limits_{\e\eta R_1}^{|x-M_k^\e -\e\eta y_{k,\e}|}  |\nabla u|^2 r^{n-1}\,dr,
\end{align*}
where $C$ is some constant independent of $\e$, $\eta$, $k$ and $u$. Integrating this estimate over $B_{\e R_3}(M_k^\e)\setminus B_{\e\eta R_1}(M_k^\e+\e\eta y_{k,\e})$, we arrive at the statement of the lemma. The proof is complete.
\end{proof}

\begin{figure}
\begin{center}
\includegraphics[scale=0.25]{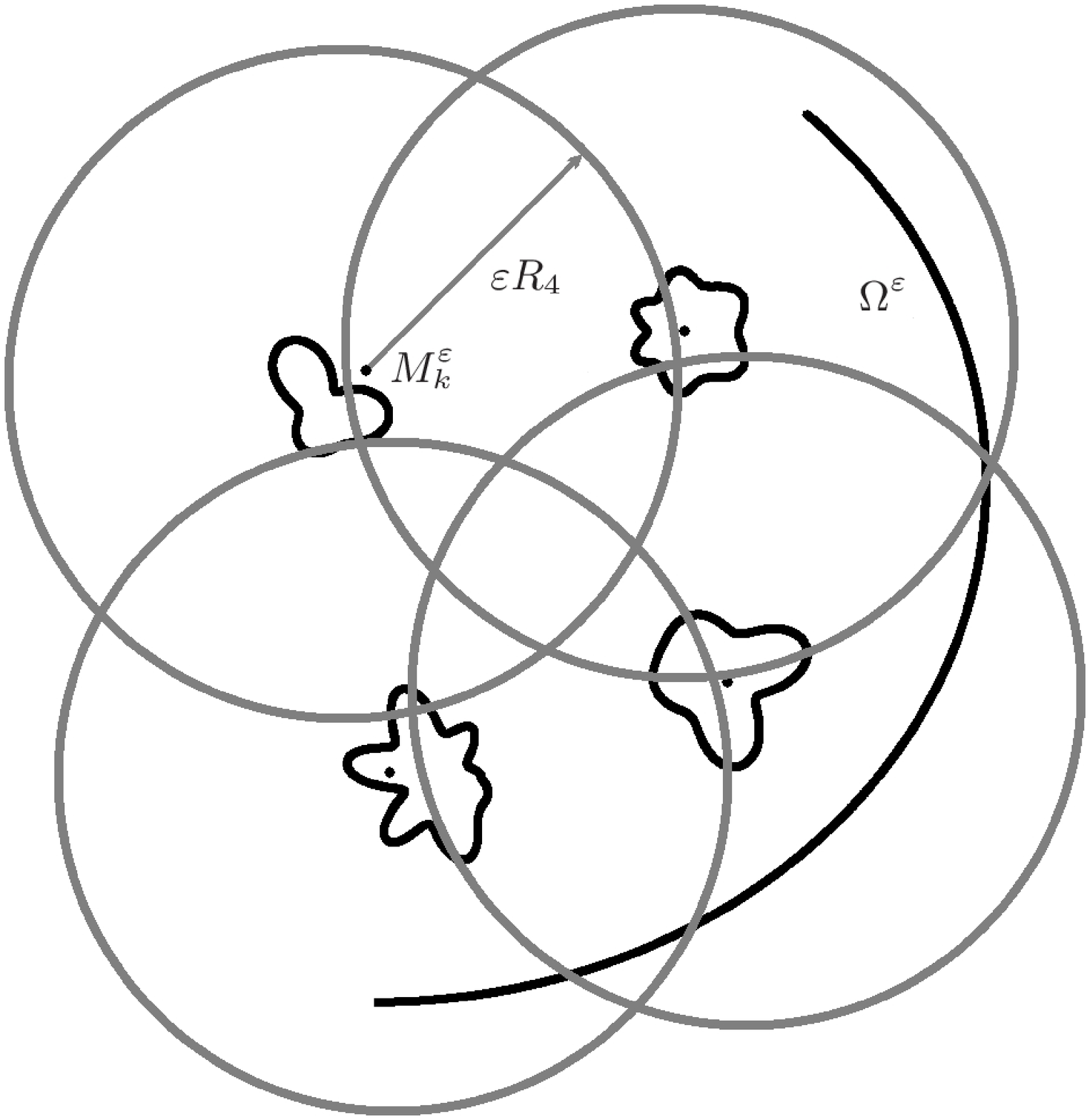}
\end{center}
\caption{Covering of  $\Om^\e$ by balls  $B_{\e R_4}(M_k^\e)$}\label{fig3}
\end{figure}

The next lemma was stated in \cite[Lm. 3.1]{MSB21} and its proof was essentially based on the Cheeger's estimate for the lowest positive eigenvalue of a Laplacian in a given domain
provided in the original work \cite{Cheeger}. However, as we found very recently, the Cheeger's estimate is to be modified appropriately once we deal with the Neumann condition on some parts of the boundary, see, for instance, \cite[Introduction]{HH}, \cite[Ch. 9, Cor. 9.7]{Li}. Despite the found gap in the proof of Lemma~3.1 in \cite{MSB21}, the lemma is still true and here we give a corrected proof which is not based on the Cheeger's estimate.

\begin{lemma}\label{lm3.0}
Under Assumption~\ref{A1}, for each $k\in \mathds{M}^\e$ and each function $u\in \Ho_2^1(B_{R_3}(0)\setminus \om_{k,\e}, \p B_{R_3}(0))$ the estimate holds:
\begin{equation*}
\|u\|_{L_2(B_{R_3}(0)\setminus \om_{k,\e})}\leqslant C \|\nabla u\|_{L_2(B_{R_3}(0)\setminus \om_{k,\e})},
\end{equation*}
where $C$ is a constant independent of $\e$, $k$ and $u$.
\end{lemma}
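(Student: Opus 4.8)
The plan is to argue by contradiction and to reduce everything to a single fixed domain, the ball $B_{R_3}(0)$, so that classical Rellich compactness and the Friedrichs inequality apply with constants that do not see the index $k$. Suppose the asserted estimate fails with any uniform constant. Then there are indices $k_m\in\mathds{M}^\e$ (with associated parameters $\e_m$) and functions $u_m\in\Ho_2^1(B_{R_3}(0)\setminus\om_{k_m,\e_m},\p B_{R_3}(0))$ normalised by $\|u_m\|_{L_2(B_{R_3}(0)\setminus\om_{k_m,\e_m})}=1$ and $\|\nabla u_m\|_{L_2(B_{R_3}(0)\setminus\om_{k_m,\e_m})}\to0$. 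Writing $\om_m:=\om_{k_m,\e_m}$ and $\Om_m:=B_{R_3}(0)\setminus\om_m$, these $u_m$ are bounded in $W_2^1(\Om_m)$ with a near-vanishing gradient.

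The core of the argument is to extend each $u_m$ across $\p\om_m$ into the hole $\om_m$ to a function $\tilde u_m\in\Ho_2^1(B_{R_3}(0),\p B_{R_3}(0))$ that (i) coincides with $u_m$ on $\Om_m$, (ii) is bounded in $W_2^1(B_{R_3}(0))$ uniformly in $m$, and, crucially, (iii) satisfies $\|\nabla\tilde u_m\|_{L_2(B_{R_3}(0))}\to0$. Since a naive bounded extension controls only the full $W_2^1$-norm, I would first subtract the mean $c_m$ of $u_m$ over the boundary collar $\{x\notin\om_m:\dist(x,\p\om_m)\le\tau_0\}$ and extend $u_m-c_m$. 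On this collar Assumption~\ref{A1} provides local coordinates $(\tau,s)$ with Jacobians bounded above and below uniformly in $k$ and $\e$, so the collar is uniformly bi-Lipschitz to a product $\p\om_m\times[0,\tau_0]$; this yields a \emph{uniform} Poincar\'e inequality $\|u_m-c_m\|_{L_2(\mathrm{collar})}\le C\|\nabla u_m\|_{L_2(\mathrm{collar})}$ together with a \emph{uniform} extension of $u_m-c_m$ into $\om_m$ (reflection in $\tau$ across $\p\om_m$, followed by a cutoff to $0$ on the part of $\om_m$ at distance greater than $\tau_0/2$ from $\p\om_m$). Adding $c_m$ back inside $\om_m$ produces $\tilde u_m$; because the extended part is built from $u_m-c_m$, its gradient is controlled by $\|\nabla u_m\|_{L_2(\Om_m)}\to0$, while $|c_m|\le C$ keeps $\|\tilde u_m\|_{L_2}$ bounded. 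The extension alters only $\om_m\subset B_{R_2}(0)$, so the zero trace on $\p B_{R_3}(0)$ is preserved.

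With $\tilde u_m$ in hand the conclusion is routine on the fixed ball: $\{\tilde u_m\}$ is bounded in $W_2^1(B_{R_3}(0))$, so a subsequence converges strongly in $L_2(B_{R_3}(0))$ and weakly in $W_2^1(B_{R_3}(0))$ to some $\tilde u$; from $\|\nabla\tilde u_m\|_{L_2}\to0$ we get $\nabla\tilde u=0$, hence $\tilde u$ is constant, and compactness of the trace operator on the fixed domain forces $\tilde u=0$ on $\p B_{R_3}(0)$, so $\tilde u\equiv0$. Then $1=\|u_m\|_{L_2(\Om_m)}^2=\int_{\Om_m}|\tilde u_m|^2\le\|\tilde u_m\|_{L_2(B_{R_3}(0))}^2\to0$, a contradiction. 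I expect the main obstacle to be exactly step (iii): producing an extension whose gradient is controlled by $\|\nabla u_m\|$ alone, with a constant independent of the varying shape $\om_{k,\e}$. This is where the uniform $C^1$ collar structure of Assumption~\ref{A1} (the two-sided Jacobian bounds) is indispensable, and it is precisely this uniform collar Poincar\'e/extension that replaces the erroneous appeal to Cheeger's estimate.
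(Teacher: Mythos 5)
Your strategy---transplant everything to the fixed ball via a uniform extension into the hole and finish on $B_{R_3}(0)$---is a legitimate alternative to the paper's route, and in fact the compactness/contradiction wrapper is redundant: once your properties (i)--(iii) hold with uniform constants, the Friedrichs inequality on the fixed ball gives $\|u\|_{L_2(B_{R_3}(0)\setminus\om_{k,\e})}\leqslant\|\tilde u\|_{L_2(B_{R_3}(0))}\leqslant C\|\nabla\tilde u\|_{L_2(B_{R_3}(0))}\leqslant C'\|\nabla u\|_{L_2(B_{R_3}(0)\setminus\om_{k,\e})}$ directly. The paper instead argues constructively: a minimax bound \eqref{3.42} on the fixed annulus $B_{R_3}(0)\setminus B_{R_2}(0)$, anchored by the Dirichlet condition on $\p B_{R_3}(0)$, is propagated inward through uniformly bounded chains of overlapping lattice cubes (this is where the connectedness of $B_{R_2}(0)\setminus\om_{k,\e}$ from Assumption~\ref{A1} is used), and the tubular neighbourhood of $\p\om_{k,\e}$ is then handled by integrating along outward normals, estimate \eqref{3.48}. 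Your proof stands or falls with the uniform extension, and its load-bearing step is exactly the one you assert rather than prove: the collar Poincar\'e inequality $\|u-c_m\|_{L_2(\mathrm{collar})}\leqslant C\|\nabla u\|_{L_2(\mathrm{collar})}$ with $C$ independent of $k$ and $\e$. Uniform bi-Lipschitz equivalence of the collar with $\p\om_{k,\e}\times[0,\tau_0]$ merely transfers the problem to a uniform Poincar\'e inequality on the \emph{varying} hypersurfaces $\p\om_{k,\e}$; that constant is governed by the global intrinsic geometry of the surface (bottlenecks, lengths of connecting chains), which the purely fibre-wise Jacobian bounds of Assumption~\ref{A1} do not control without an additional quantitative chaining argument---i.e.\ precisely the argument that constitutes the paper's proof. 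Moreover, subtracting a single mean $c_m$ presupposes the collar is connected; Assumption~\ref{A1} does not assume $\p\om_{k,\e}$ connected, and connectedness must be derived (a Jordan--Brouwer type argument) from the assumed connectedness of $B_{R_2}(0)\setminus\om_{k,\e}$---an assumption your write-up never invokes, even though the lemma is false without it (take $\om_{k,\e}$ a shell and $u$ equal to distinct constants on the two components of the complement).

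There is a second concrete defect: the reflection. Assumption~\ref{A1} provides the coordinates $(\tau,s)$ only on the \emph{outer} side of $\p\om_{k,\e}$, and no inner collar of uniform width follows from it. The assumption permits domains containing an arbitrarily thin solid region---two nearly parallel smooth faces at mutual distance $\d\ll\tau_0$ with bounded curvatures; there the outward normals from the two faces diverge, so the outer collar and its Jacobian bounds are unaffected, while every interior point of the thin region lies within distance $\d$ of two different sheets of $\p\om_{k,\e}$. Your prescription ``reflect in $\tau$ across $\p\om_m$, cut off beyond $\tau_0/2$'' then assigns two conflicting values, so the extension is not even well-defined, let alone uniformly bounded, for such admissible domains. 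The paper's proof of this lemma deliberately avoids any continuation into the cavity and works only with the outer collar. In summary: the skeleton is sound and you correctly identified step (iii) as the crux, but the two claims carrying the whole proof---the uniform collar Poincar\'e inequality and the inner reflection---are, respectively, unproven (and essentially equivalent to the paper's chaining argument) and unavailable under Assumption~\ref{A1} as stated.
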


\begin{proof}
We choose an arbitrary function $u\in\Ho^1(B_{R_3}(0)\setminus \om_{k,\e}, \p B_{R_3}(0))$. Since the lowest eigenvalue of the Laplacian in $B_{R_3}(0)\setminus B_{R_2}(0)$ subject to the Dirichlet condition on $\p B_{R_3}(0)$ and to the Neumann condition on $\p B_{R_2}(0)$ is strictly positive, by the minimax principle we immediately obtain:
\begin{equation}\label{3.42}
\|u\|_{L_2(B_{R_3}(0)\setminus B_{R_2}(0))}^2\leqslant C \|\nabla u\|_{L_2(B_{R_3}(0)\setminus B_{R_2}(0))}^2;
\end{equation}
throughout the proof by $C$ we denote various inessential constants independent of $\e$, $k$, $u$ and the spatial variables.

We fix a positive number $\tau_1\leqslant \frac{\tau_0}{10\sqrt{n}}$ and consider a lattice $\tau_1\mathds{Z}^n$ in $\mathds{R}^n$. By $\G_{k,\e}$ we denote a subset of this lattice defined as
\begin{equation*}
\G_{k,\e}:=\big\{z\in\tau_1\mathds{Z}^n:\ z+2\tau_1 (0,1)^n\subset B_{R_3}(0)\setminus\om_{k,\e}\big\}.
\end{equation*}
The number of the points in the set $\G_{k,\e}$ satisfies an obvious estimate
\begin{equation}\label{3.43}
\# \G_{k,\e}\leqslant \# \tau_1\mathds{Z}^n\cap B_{R_3}(0),
\end{equation}
which is uniform in $k$ and $\e$. It also follows from the definition of $\G_{k,\e}$ that choosing $\tau_1$ sufficiently small but fixed, we have the covering
\begin{equation}\label{3.44}
B_{\frac{R_2+R_3}{2}}(0)\setminus\tilde{\om}_{k,\e} \subseteq \bigcup\limits_{z\in\G_{k,\e}} (z+2\tau_1(0,1)^n),\qquad \tilde{\om}_{k,\e}:=\bigg\{x:\ \dist(x,\om_{k,\e})\leqslant \frac{\tau_0}{2}\bigg\},\qquad \om_{k,\e}\subset\tilde{\om}_{k,\e},
\end{equation}
for all $\e$ and $k$.
Since the domain $B_{R_3}(0)\setminus\om_{k,\e}$ is connected by Assumption~\ref{A1},  the above covering yields that for each $z\in \G_{k,\e}\cap B_{\frac{2R_2+R_3}{3}}(0)$ such that $\dist\big(\p\om_{k,\e},z+2\tau_1(0,1)^n\big)\geqslant \frac{\tau_0}{2}$ there exist points $y_j=y_j(z)\in\G_{k,\e}$, $j=0,\ldots,N(z)$,  $y_N(z)=z$ such that
\begin{equation}
y_0(z)+2\tau_1(0,1)^n\subset B_{\frac{R_2+R_3}{2}}(0)\setminus B_{R_2}(0),\qquad y_j(z)+2\tau_1(0,1)^n\subset B_{\frac{R_2+R_3}{2}}(0)\setminus\om_{k,\e},\qquad y_j\ne y_p,\quad j\ne p,
\label{3.45}
\end{equation}
and 
for each $j$ the coordinates of the  points $y_j$ and $y_{j+1}$ differ at most by $\tau_1$. Due to estimate (\ref{3.43}), the number $N(z)$ is bounded uniformly in $\e$, $k$ and $z$. For each two neighbouring points $y_j$ and $y_{j+1}$ the cubes $y_j+2\tau_1(0,1)^n$ and $y_{j+1}+2\tau_1(0,1)^n$ have a non-empty intersection, which contains at least an appropriate shift of the cube $\tau_1(0,1)^n$, see Figure~\ref{fig4}.

For each  $z\in \G_{k,\e}\cap B_{\frac{2R_2+R_3}{3}}(0)$ such that $\dist\big(\p\om_{k,\e},z+2\tau_1(0,1)^n\big)\geqslant \frac{\tau_0}{2}$ we choose a corresponding point $y_0(z)$ obeying (\ref{3.45}) and by (\ref{3.42}) we have the estimate
\begin{equation}\label{3.46}
\|u\|_{L_2(y+\tau_1(0,1)^n)}\leqslant C \|\nabla u\|_{L_2(B_{R_3}(0)\setminus\om_{k,\e})}
\end{equation}
with $y=y_0$. Then we choose corresponding points $y_j$ and suppose that for some $j$ estimate (\ref{3.46}) holds with $y=y_j$. Then the cubes $y_{j+1}+2\tau_1 (0,1)^n$  intersects with $y_j+2\tau_1(0,1)^n$ at least by a cube with side $\tau_1$,  see Figure~\ref{fig5}; we denote this cube by $Q_j$. At least one of the vertices of $Q_j$ coincide with one of the vertices of $y_{j+1}+2\tau_1 (0,1)^n$; we denote this vertex by $q_j$. For an arbitrary point $x\in y_{j+1}+2\tau_1 (0,1)^n$ separated from $q_j$ by a distance at least $\frac{\tau_1}{2}$, we consider a segment $I(x)$ connecting the point $q_j$ and $x$ with a variable $t\in[0,|x-q_j|]$ on this segment. Then for $x\in y_{j+1}+2\tau_1(0,1)^n$ such that $|x-q_j|>\frac{\tau_1}{2}$ we obviously have
\begin{equation*}
u(x)=\int\limits_{0}^{|x-q_j|} \frac{\p\ }{\p t} u\chi_{1}(t)\,dt,
\end{equation*}
where the integration is taken along the segment $I(x)$ and $\chi_{1}=\chi_{1}(t)$ is an infinitely differentiable function equalling to one as $t>\frac{\tau_1}{2}$ and vanishing as $t<\frac{\tau_1}{3}$.  Then by the Cauchy-Schwarz inequality we immediately get:
\begin{equation*}
|u(x)|^2\leqslant C \int\limits_{0}^{|x-q_j|}  |\nabla u|^2 t^{n-1}\,dt+C \int\limits_{0}^{\frac{\tau_1}{2}}|u|^2  t^{n-1}\,dt,
\end{equation*}
where the integration is again made along the segment $I(x)$. Integrating the obtained inequality over $(y_{j+1}+2\tau_1(0,1)^n)\setminus Q_j$, passing to the spherical coordinates and using (\ref{3.46}), we find:
\begin{equation*}
\|u\|_{L_2((y_{j+1}+2\tau_1(0,1)^n)\setminus Q_j)}^2\leqslant C \|\nabla u\|_{W_2^1(y_{j+1}+2\tau_1(0,1)^n)}^2+C\|u\|_{L_2(Q_j)}^2\leqslant C \|\nabla u\|_{L_2(B_{R_3}(0)\setminus\om_{k,\e})}^2
\end{equation*}
and this implies (\ref{3.46}) for $y=y_{j+1}$.  Hence, using the above described procedure of extending estimate (\ref{3.46}) along the points $y_j$ and summing up   all obtained estimates over $y\in  \G_{k,\e}\cap B_{\frac{2R_2+R_3}{3}}(0)$ such that
$$
\dist\big(\p\om_{k,\e},y+2\tau_1(0,1)^n\big)\geqslant \frac{\tau_0}{2},
$$
by (\ref{3.42}) we finally get:
\begin{equation}\label{3.47}
\|u\|_{L_2(B_{R_3}(0)\setminus \hat{\om}_{k,\e})}\leqslant C \|\nabla\|_{L_2(B_{R_3}(0)\setminus \hat{\om}_{k,\e})},\qquad \hat{\om}_{k,\e}:=\bigg\{x:\ \dist(x,\om_k^\e)\leqslant \frac{7\tau_0}{10}\bigg\}.
\end{equation}

Let $\chi_{2}=\chi_{2}(\tau)$ be an infinitely differentiable cut-off function equalling to one as $\tau<8\tau_0/10$ and vanishing as $\tau>9\tau_0/10$. Then for $x$ such that $\tau<7\tau_0/10$ due to the regularity of the boundaries $\p\om_{k,\e}$ postulated in Assumption~\ref{A1} we have
\begin{equation}\label{3.48}
|u(x)|^2=\Bigg|\int\limits_{\tau}^{\tau_0} \frac{\p\ }{\p\tau} u\chi_{2}\,d\tau\Bigg|^2
\leqslant C \int\limits_0^{\tau_0} (|\nabla u|^2+|u|^2)\,d\tau.
\end{equation}
Integrating this estimate over $\hat{\om}_{k,\e}\setminus\om_{k,\e}$ and using then (\ref{3.47}), we obtain:
\begin{equation*}
\|u\|_{L_2(\hat{\om}_{k,\e}\setminus\om_{k,\e})}^2\leqslant C\|\nabla u\|_{L_2(\hat{\om}_{k,\e}\setminus\om_{k,\e})}^2 + C \|u\|_{L_2(B_{R_3}(0)\setminus \hat{\om}_{k,\e})}^2  \leqslant C \|\nabla u\|_{L_2(B_{R_3}(0)\setminus \om_{k,\e})}^2.
 \end{equation*}
This estimate and (\ref{3.47}) imply the estimate in the statement of the lemma. The proof is complete.
\end{proof}

\begin{figure}
\begin{center}
\includegraphics[scale=0.3]{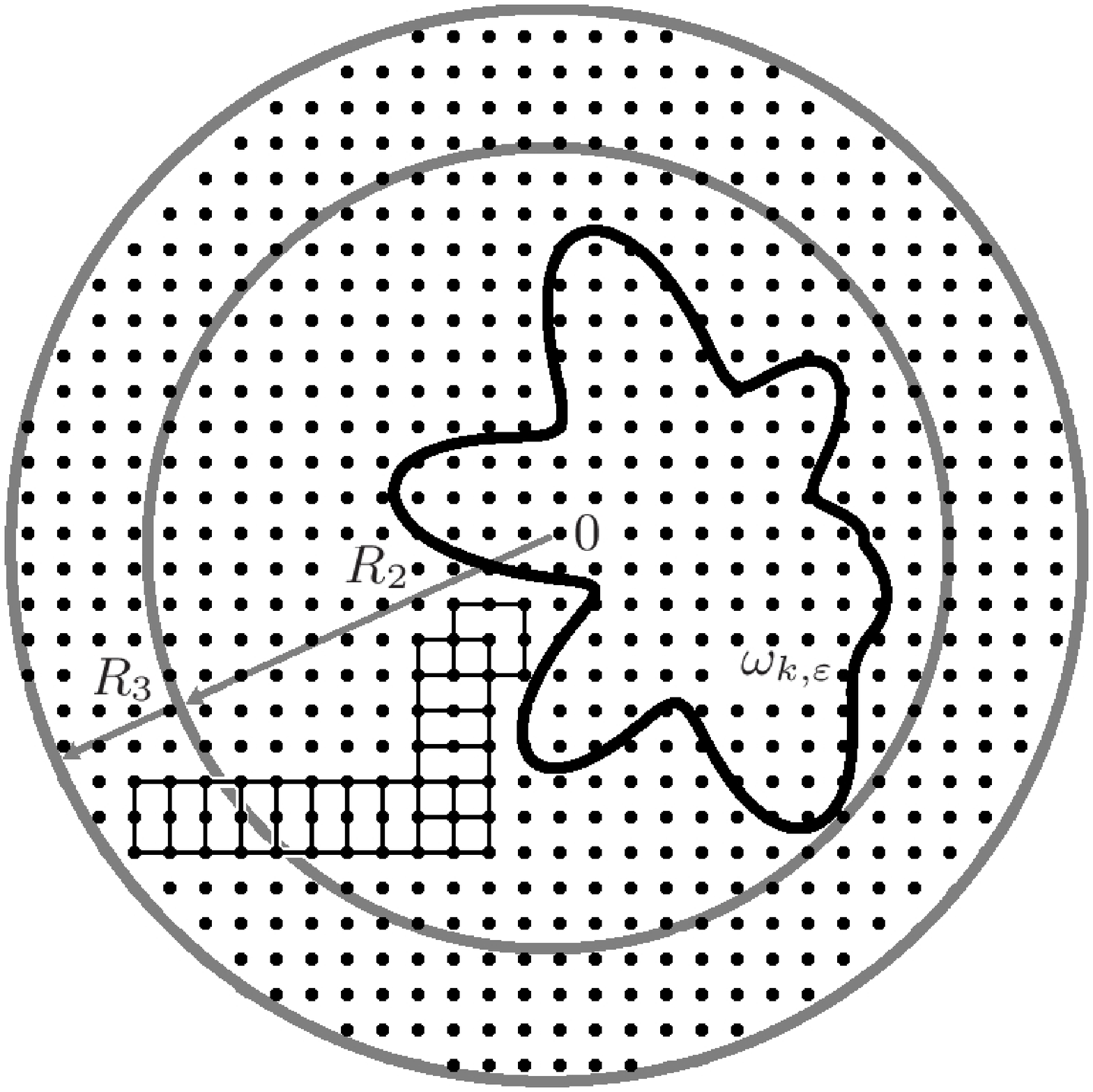}
\end{center}

\caption{Lattice points on $B_{R_3}(0)\setminus\om_{k,\e}$ and the path formed by the cubes $y_j+2\tau_1(0,1)^n$.}\label{fig4}
\end{figure}

\begin{lemma}\label{lm3.7}
Under Assumption~\ref{A1} the problem
\begin{equation}\label{3.19}
-\D X_{k,\e}=1\quad\text{in}\quad B_{R_3}(0)\setminus\overline{\om_{k,\e}},\qquad X_{k,\e}=0\quad\text{on}\quad \p B_{R_3}(0),\qquad \frac{\p X_{k,\e}}{\p\nu}=0\quad\text{on}\quad\p\om_{k,\e},\quad
\end{equation}
possesses a unique generalized solution for all $\e$ and $k\in\mathds{M}^\e$. This solution belongs to $W_2^1(B_{R_3}(0)\setminus\om_{k,\e})$ and satisfies the estimate
\begin{equation}\label{3.22}
\|X_{k,\e}\|_{W_2^1(B_{R_3}(0)\setminus\om_{k,\e})}  \leqslant C, \qquad \left\|\frac{\p X_{k,\e}}{\p |x|}\right\|_{L_2(\p B_{R_5}(0))}\leqslant C,\qquad  R_5:= \frac{R_2+R_3 }{2},
\end{equation}
with a constant  $C$ independent of $x$, $k$ and $\e$.
\end{lemma}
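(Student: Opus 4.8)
The plan is to prove Lemma~\ref{lm3.7} in two stages: first establish the existence, uniqueness and $W_2^1$-bound of the solution $X_{k,\e}$ by a standard variational argument, and then separately control the normal derivative on the sphere $\p B_{R_5}(0)$ by a local elliptic regularity argument that exploits the fact that $B_{R_5}(0)$ stays at a fixed positive distance from $\p\om_{k,\e}$.

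For the first part, I would work in the Hilbert space $V:=\Ho_2^1(B_{R_3}(0)\setminus\om_{k,\e},\p B_{R_3}(0))$, i.e.\ functions in $W_2^1$ vanishing on the outer sphere but free on $\p\om_{k,\e}$ (which encodes the Neumann condition there). The generalized formulation of (\ref{3.19}) is the identity $(\nabla X_{k,\e},\nabla v)_{L_2(B_{R_3}(0)\setminus\om_{k,\e})}=(1,v)_{L_2(B_{R_3}(0)\setminus\om_{k,\e})}$ for all $v\in V$. The key point is that Lemma~\ref{lm3.0} is exactly the Poincar\'e-type inequality $\|v\|_{L_2}\leqslant C\|\nabla v\|_{L_2}$ valid on $V$, uniformly in $k$ and $\e$. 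Hence the bilinear form $(\nabla\,\cdot\,,\nabla\,\cdot\,)$ is coercive on $V$ with a constant independent of $k,\e$, and since the volume $|B_{R_3}(0)\setminus\om_{k,\e}|$ is bounded, the right-hand side functional $v\mapsto(1,v)$ is bounded uniformly as well. The Lax--Milgram theorem then gives a unique $X_{k,\e}\in V$, and testing with $v=X_{k,\e}$ yields $\|\nabla X_{k,\e}\|_{L_2}^2=(1,X_{k,\e})_{L_2}\leqslant |B_{R_3}(0)|^{1/2}\|X_{k,\e}\|_{L_2}\leqslant C\|\nabla X_{k,\e}\|_{L_2}$, whence $\|\nabla X_{k,\e}\|_{L_2}\leqslant C$ and, using Lemma~\ref{lm3.0} once more, the full $W_2^1$-bound in the first inequality of (\ref{3.22}), all uniform in $k,\e$.

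For the normal-derivative bound, the crucial geometric observation is that the sphere $\p B_{R_5}(0)$ with $R_5=(R_2+R_3)/2$ lies strictly inside the annulus $B_{R_3}(0)\setminus B_{R_2}(0)$, which by the inclusion $\om_{k,\e}\subseteq B_{R_2}(0)$ in Assumption~\ref{A1} is disjoint from the cavity $\om_{k,\e}$. Thus in a fixed annular neighbourhood $A:=B_{(R_3+R_5)/2}(0)\setminus B_{(R_2+R_5)/2}(0)$ of $\p B_{R_5}(0)$, the function $X_{k,\e}$ solves the genuinely interior equation $-\D X_{k,\e}=1$ with no boundary of the perforation intruding. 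Interior elliptic regularity (applied on this fixed domain $A$, whose geometry does not depend on $k,\e$) then gives a $W_2^2(A')$-bound on any slightly smaller concentric annulus $A'\supset\p B_{R_5}(0)$ of the form $\|X_{k,\e}\|_{W_2^2(A')}\leqslant C(\|X_{k,\e}\|_{W_2^1(A)}+\|1\|_{L_2(A)})\leqslant C$, using the $W_2^1$-bound already obtained. Finally, the trace theorem $W_2^2(A')\hookrightarrow W_2^1(\p B_{R_5}(0))$, or more directly the boundedness of the normal-derivative trace $\p X_{k,\e}/\p|x|$ on the interior sphere, yields $\|\p X_{k,\e}/\p|x|\|_{L_2(\p B_{R_5}(0))}\leqslant C\|X_{k,\e}\|_{W_2^2(A')}\leqslant C$, which is the second inequality of (\ref{3.22}).

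The main obstacle, and the place where care is needed, is ensuring that all constants are genuinely uniform in $k$ and $\e$. Coercivity is handled cleanly by Lemma~\ref{lm3.0}, whose whole point was to provide a $k,\e$-uniform Poincar\'e constant; the subtlety is that the interior regularity estimate must be invoked on the fixed annulus $A$, whose boundary spheres $\p B_{(R_3+R_5)/2}(0)$ and $\p B_{(R_2+R_5)/2}(0)$ are $k,\e$-independent smooth surfaces, so the regularity constant is one fixed number independent of the perforation. Since $\om_{k,\e}$ never touches $A$, the only geometric input is the separation $R_2<R_5<R_3$, which holds by Assumption~\ref{A1}; this is precisely what decouples the normal-derivative estimate on $\p B_{R_5}(0)$ from the irregular and $k$-dependent geometry of $\p\om_{k,\e}$.
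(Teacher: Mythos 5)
Your proposal is correct and follows essentially the same route as the paper: unique solvability and the uniform $W_2^1$-bound come from the variational formulation with the $k,\e$-uniform Poincar\'e inequality of Lemma~\ref{lm3.0} (testing with the solution itself), and the normal-derivative bound on $\p B_{R_5}(0)$ comes from interior elliptic regularity on a fixed annulus strictly between $B_{R_2}(0)$ and $B_{R_3}(0)$ (the paper uses $\d=\tfrac{1}{5}(R_3-R_2)$, matching your annulus $A$) followed by a trace estimate. Your explicit remarks on why the constants are uniform in $k$ and $\e$ simply spell out what the paper leaves implicit.
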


\begin{proof}
Due to the presence of the Dirichlet condition on $\p B_{R_3}(0)$ in problem (\ref{3.19}), it is uniquely solvable and the generalized solution belongs to $W_2^2(B_R(0)\setminus\om_{k,\e})$. The corresponding integral identity
\begin{equation*}
\|\nabla X_{k,\e}\|_{L_2(B_{R_3}(0)\setminus\om_{k,\e})}^2=\int\limits_{B_{R_3}(0)\setminus\om_{k,\e}} X_{k,\e}\,dx
\end{equation*}
and Lemma~\ref{lm3.0} by the Cauchy-Schwarz inequality give the estimates
\begin{equation*}
\|\nabla X_{k,\e}\|_{L_2(B_{R_3}(0)\setminus\om_{k,\e})}^2\leqslant C \|X_{k,\e}\|_{L_2(B_{R_3}(0)\setminus\om_{k,\e})}\leqslant C \|\nabla X_{k,\e}\|_{L_2(B_{R_3}(0)\setminus\om_{k,\e})}
\end{equation*}
with constants $C$ independent of $\e$ and $k$. These estimate imply the first inequality in (\ref{3.22}). Standard smoothness improving estimates then yield
\begin{equation*}
\|X_{k,\e}\|_{W_2^2(B_{R_5+\d}(0)\setminus B_{R_5-\d}(0))}\leqslant C
\|X_{k,\e}\|_{W_2^1(B_{R_5+2\d}(0)\setminus B_{R_5-2\d}(0))}\leqslant C
\end{equation*}
with $\d:=\frac{1}{5}(R_3-R_2)$ and a fixed constant $C$ depending only on $\d$. This yield the second inequality in (\ref{3.22}). The proof is complete.
\end{proof}

\begin{figure}
\begin{center}
\includegraphics[scale=0.5]{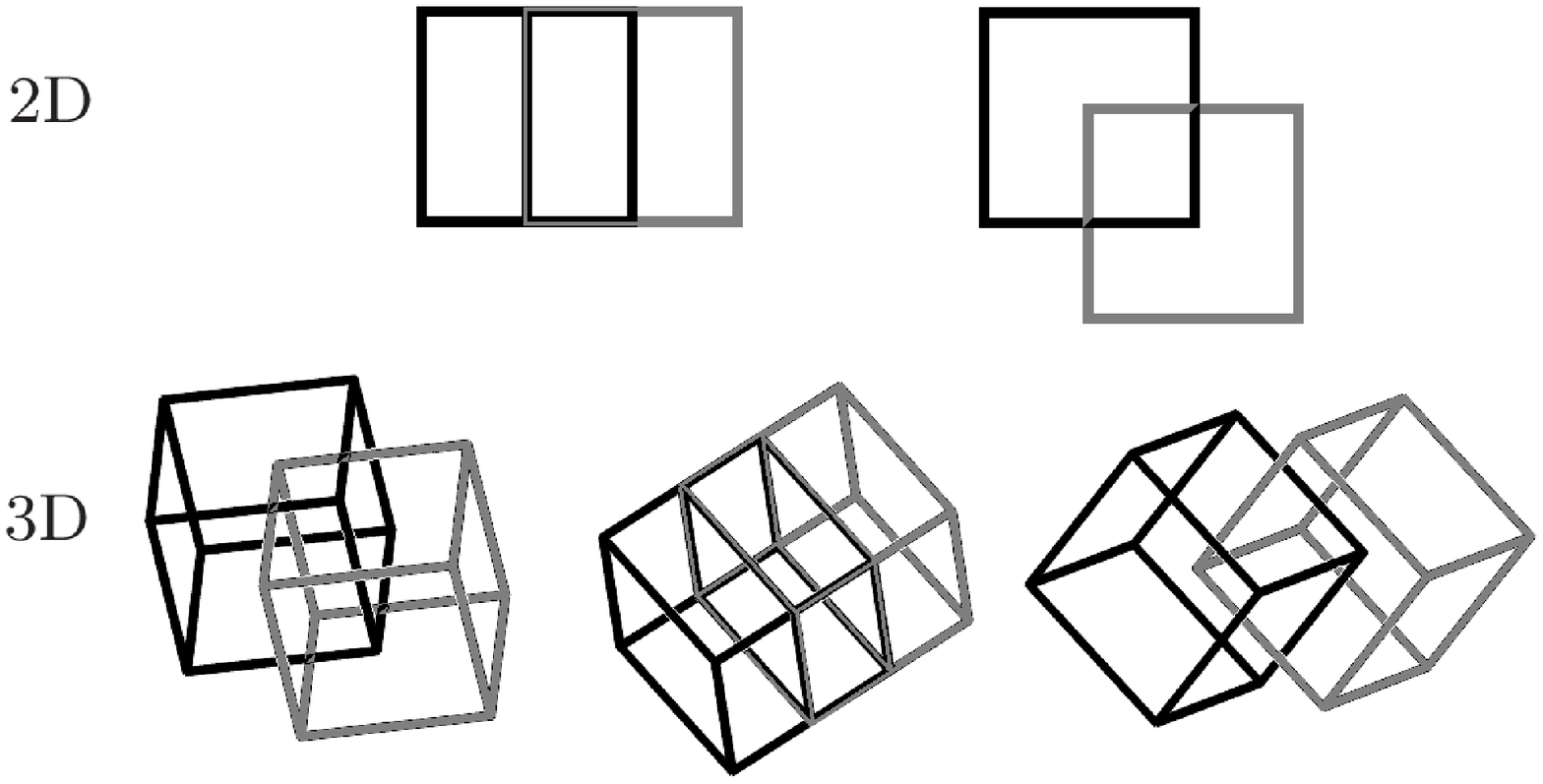}
\end{center}

\caption{Possible intersections of cubes $y_j+2\tau_1(0,1)^n$ in two- and three-dimensional cases.}
\label{fig5}
\end{figure}

\begin{lemma}\label{lm3.8}
Under Assumption~\ref{A1}, for each $k\in \mathds{M}_D^\e$ and each function $u\in W_2^1(B_{R_3}(0)\setminus \om_{k,\e})$ obeying the condition
\begin{equation}\label{3.24}
\int\limits_{B_{R_3}(0)\setminus \om_{k,\e}}u(x)\,dx=0
\end{equation}
the estimate holds:
\begin{equation}\label{3.26}
\|u\|_{L_2(B_{R_3}(0)\setminus \om_{k,\e})}\leqslant C \|\nabla u\|_{L_2(B_{R_3}(0)\setminus \om_{k,\e})},
\end{equation}
where $C$ is a constant independent of $\e$, $k$ and $u$.
\end{lemma}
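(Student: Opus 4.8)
The plan is to establish a Poincaré--Wirtinger inequality whose constant is uniform in $k$ and $\e$ by reducing the statement to Lemma~\ref{lm3.0}, which already provides a uniform Poincaré inequality for functions vanishing on the outer sphere $\p B_{R_3}(0)$. The obstruction is that $u$ is only assumed to have zero mean and need not vanish on $\p B_{R_3}(0)$, and subtracting a constant does not repair the boundary trace. I would therefore localize away from the outer sphere with a fixed cut-off and pay the resulting commutator term by a Poincaré--Wirtinger inequality on the \emph{fixed} spherical annulus $B_{R_3}(0)\setminus B_{R_5}(0)$, where $R_5:=\frac{R_2+R_3}{2}\in(R_2,R_3)$; this annulus is disjoint from $\om_{k,\e}$ for all $k$ and $\e$ since $\om_{k,\e}\subseteq B_{R_2}(0)$, so on it $u$ is an ordinary $W_2^1$-function of a domain that does not depend on $k,\e$.

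Concretely, I would first let $c$ be the mean value of $u$ over the annulus $B_{R_3}(0)\setminus B_{R_5}(0)$ and set $w:=u-c$. On this fixed smooth connected annulus the standard Poincaré--Wirtinger inequality gives $\|w\|_{L_2(B_{R_3}(0)\setminus B_{R_5}(0))}\leqslant C\|\nabla u\|_{L_2(B_{R_3}(0)\setminus B_{R_5}(0))}$ with $C$ independent of $k,\e$. Next I would fix a radial cut-off $\chi\in C^\infty$ with $\chi\equiv1$ on $B_{R_5}(0)$ and $\supp\chi\subset B_{R_6}(0)$ for some $R_5<R_6<R_3$, and apply Lemma~\ref{lm3.0} to $v:=\chi w$, which belongs to $\Ho^1(B_{R_3}(0)\setminus\om_{k,\e},\p B_{R_3}(0))$ because it vanishes near $\p B_{R_3}(0)$. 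This yields $\|v\|_{L_2}\leqslant C\|\nabla v\|_{L_2}$, and since $\nabla v=\chi\nabla u+w\nabla\chi$ with $\supp\nabla\chi\subset B_{R_3}(0)\setminus B_{R_5}(0)$, the annulus estimate bounds the commutator, giving $\|\nabla v\|_{L_2(B_{R_3}(0)\setminus\om_{k,\e})}\leqslant C\|\nabla u\|_{L_2(B_{R_3}(0)\setminus\om_{k,\e})}$. As $v=w$ on $B_{R_5}(0)$, combining this with the annulus estimate produces $\|w\|_{L_2(B_{R_3}(0)\setminus\om_{k,\e})}\leqslant C\|\nabla u\|_{L_2(B_{R_3}(0)\setminus\om_{k,\e})}$.

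Finally I would recover $u$ from $w$ using the zero-mean hypothesis (\ref{3.24}). Integrating $w=u-c$ over $B_{R_3}(0)\setminus\om_{k,\e}$ and using $\int u\,dx=0$ gives $|c|\,|B_{R_3}(0)\setminus\om_{k,\e}|=\big|\int w\,dx\big|\leqslant|B_{R_3}(0)\setminus\om_{k,\e}|^{1/2}\|w\|_{L_2}$, whence $\|c\|_{L_2}=|c|\,|B_{R_3}(0)\setminus\om_{k,\e}|^{1/2}\leqslant\|w\|_{L_2}\leqslant C\|\nabla u\|_{L_2}$. Here the two-sided volume bounds $|B_{R_3}(0)\setminus B_{R_2}(0)|\leqslant|B_{R_3}(0)\setminus\om_{k,\e}|\leqslant|B_{R_3}(0)|$, which follow from $\om_{k,\e}\subseteq B_{R_2}(0)$, keep the constant uniform in $k,\e$. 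The triangle inequality $\|u\|_{L_2}\leqslant\|w\|_{L_2}+\|c\|_{L_2}\leqslant C\|\nabla u\|_{L_2}$ then gives (\ref{3.26}).

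I expect the only genuinely delicate point to be the passage through the outer sphere: one cannot feed $u$ (or $u-c$) directly into Lemma~\ref{lm3.0} because of the nonzero trace on $\p B_{R_3}(0)$, and it is precisely the cut-off together with the fixed-annulus Poincaré--Wirtinger inequality that circumvents this while preserving uniformity in $k$ and $\e$. Everything else---the annulus inequality, the volume bounds, and the estimate for the constant $c$---involves only fixed domains or elementary bounds and is uniform by construction.
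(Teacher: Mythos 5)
Your proof is correct, and it takes a genuinely different route from the paper's, although both start from the same decomposition: the paper also subtracts the mean over the fixed annulus $B_{R_3}(0)\setminus B_{R_5}(0)$ with the same $R_5=\frac{R_2+R_3}{2}$ and invokes the Poincar\'e--Wirtinger inequality there. The divergence is in the two remaining steps. To control the constant, the paper solves the auxiliary Neumann problem (\ref{3.19}) and uses the uniform bounds of Lemma~\ref{lm3.7} on the corrector $X_{k,\e}$ (themselves resting on Lemma~\ref{lm3.0} and smoothness-improving estimates near $\p B_{R_5}(0)$, including the trace bound (\ref{3.28})) to express $\int_{B_{R_5}(0)\setminus\om_{k,\e}}u\,dx$ by integration by parts and thereby bound $\la u\ra$ \emph{before} any global bound on $u$ is available; you instead bound the constant a posteriori: once $\|w\|_{L_2(B_{R_3}(0)\setminus\om_{k,\e})}\leqslant C\|\nabla u\|_{L_2(B_{R_3}(0)\setminus\om_{k,\e})}$ is known, the zero-mean hypothesis (\ref{3.24}) plus the Cauchy--Schwarz inequality give $\|c\|_{L_2}\leqslant\|w\|_{L_2}$ --- in fact this is just the $L_2$-projection property of the mean, so even the two-sided volume bounds you cite are not strictly needed. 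To propagate the annulus estimate inward, the paper re-runs the lattice and chain-of-cubes construction from the proof of Lemma~\ref{lm3.0}, whereas you reuse Lemma~\ref{lm3.0} as a black box applied to the cut-off function $v=\chi w$ (which is legitimate: $v$ vanishes identically near $\p B_{R_3}(0)$, so it lies in the required subspace), paying only the commutator term $w\nabla\chi$, which is supported in the fixed annulus and absorbed by the Poincar\'e--Wirtinger bound there. The net effect is a shorter and more elementary argument in which Lemma~\ref{lm3.7} and the second pass through the cube-chain machinery become unnecessary; all your constants (the annulus Poincar\'e--Wirtinger constant, the constant of Lemma~\ref{lm3.0}, $\max|\nabla\chi|$) are manifestly uniform in $k$ and $\e$, as required. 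What the paper's corrector method buys in exchange is flexibility: the integration-by-parts identity with $X_{k,\e}$ adapts to normalizations other than a plain zero mean over the whole domain, but for the statement as given your orthogonality argument is the more direct one.
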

\begin{proof}
Since the space $C^\infty(B_{R_3}(0)\setminus \om_{k,\e})$ is dense in $W_2^1(B_{R_3}(0)\setminus \om_{k,\e})$, it is sufficient to prove (\ref{3.26}) only for smooth functions $u$. Given such smooth function $u$, we denote
\begin{equation*}
\la u\ra:=\frac{1}{\mes_n B_{R_3}(0)\setminus B_{R_5}(0)}\int\limits_{B_{R_3}(0)\setminus B_{R_5}(0)} u(x)\,dx,\qquad R_5:=\frac{R_2+R_3}{2}, \qquad u_\bot:=u-\la u \ra,
\end{equation*}
where $\mes_n(\,\cdot\,)$ stands for the $n$-dimensional Lebesgue measure of a set. It is clear that
\begin{equation}
\|u\|_{L_2(B_{R_3}(0)\setminus B_{R_5}(0))}^2 = |\la u\ra|^2 \mes_n B_{R_3}(0)\setminus B_{R_5}(0) + \|u_\bot\|_{L_2(B_{R_3}(0)\setminus B_{R_5}(0))}^2,\quad \int\limits_{B_{R_3}(0)\setminus B_{R_5}(0)} u_\bot\,dx=0.\label{3.32}
\end{equation}
Since the second eigenvalue of the Neumann Laplacian in $B_{R_3}(0)\setminus B_{R_5}(0)$ is strictly positive, by the minimax principle the function $u_\bot$ satisfies the estimate
\begin{equation}\label{3.27}
\|u_\bot\|_{L_2(B_{R_3}(0)\setminus B_{R_5}(0))} \leqslant C \|\nabla u_\bot\|_{L_2(B_{R_3}(0)\setminus B_{R_5}(0))};
\end{equation}
throughout the proof by $C$ we denote various inessential constants independent of $u$,  $k$, $\e$ and spatial variables. This estimate then obviously implies that
\begin{equation}\label{3.28}
\|u_\bot\|_{L_2(\p B_{R_5}(0))}\leqslant C \|\nabla u\|_{L_2(B_{R_3}(0)\setminus B_{R_5}(0))}.
\end{equation}
We also observe that by (\ref{3.24})
\begin{equation}\label{3.30}
\la u\ra=-\frac{1}{\mes_n B_{R_3}(0)\setminus B_{R_5}(0)}\int\limits_{B_{R_5}(0)\setminus \om_{k,\e}} u\,dx.
\end{equation}

We take the solution  $X_{k,\e}$ to problem (\ref{3.19})   and integrate by partS:
\begin{equation*}
\mes_n B_{R_5}(0)\setminus \om_{k,\e}=-\int\limits_{B_{R_5}(0)\setminus \om_{k,\e}} \D X_{k,\e}\,dx=-\int\limits_{\p B_{R_5}(0)}\frac{\p X_{k,\e}}{\p |x|}\,ds.
\end{equation*}
Using this identity, we make a similar integration by parts:
\begin{align*}
\int\limits_{B_{R_5}(0)\setminus \om_{k,\e}} u\,dx = & -\int\limits_{B_{R_5}(0)\setminus \om_{k,\e}} u\D X_{k,\e}\,dx
=-\int\limits_{\p B_{R_5}(0)} u\frac{\p X_{k,\e}}{\p|x|}\,ds + \int\limits_{B_{R_5}(0)\setminus \om_{k,\e}} \nabla u\cdot \nabla X_{k,\e}\,dx
\\
=& \la u\ra \mes_n B_{R_5}(0)\setminus \om_{k,\e} -
\int\limits_{\p B_{R_5}(0)} u_\bot\frac{\p X_{k,\e}}{\p|x|}\,ds + \int\limits_{B_{R_5}(0)\setminus \om_{k,\e}} \nabla u\cdot \nabla X_{k,\e}\,dx.
\end{align*}
Comparing this identity with (\ref{3.30}), we obtain:
\begin{equation*}
\la u \ra =\frac{1}{\mes_n B_{R_3}(0)\setminus\om_{k,\e}} \Bigg(
\int\limits_{\p B_{R_5}(0)} u_\bot\frac{\p X_{k,\e}}{\p|x|}\,ds - \int\limits_{B_{R_5}(0)\setminus \om_{k,\e}} \nabla u\cdot \nabla X_{k,\e}\,dx
\Bigg)
\end{equation*}
and by estimates (\ref{3.22}), (\ref{3.27}), (\ref{3.28}) we see that
\begin{equation*}
|\la u \ra| \leqslant C \|\nabla u\|_{L_2(B_{R_3}(0)\setminus \om_{k,\e})}.
\end{equation*}
Therefore, in view of the first identity in (\ref{3.32}) and estimates (\ref{3.27}), (\ref{3.28}),
\begin{equation}\label{3.33}
\|u\|_{L_2(B_{R_3}(0)\setminus B_{R_5}(0))}\leqslant C \|\nabla u\|_{L_2(B_{R_3}(0)\setminus B_{R_5}(0))}.
\end{equation}
Now we reproduce the proof of Lemma~\ref{lm3.0} and again introduce the lattice $\G_{k,\e}$ and corresponding covering (\ref{3.44}). Estimate (\ref{3.44}) ensures (\ref{3.46}) for cubes located in $B_{R_3}(0)\setminus B_{R_5}(0))$ provided is chosen sufficiently small and fixed. Then as in the proof of Lemma~\ref{lm3.0} we extend estimate (\ref{3.46}) to other cubes and for a tubular neigbourhood of $\p\om_{k,\e}$ getting finally
\begin{equation*}
\|u\|_{L_2(B_{R_5}(0)\setminus \om_{k,\e})}  \leqslant C \|\nabla u\|_{L_2(B_{R_3}(0)\setminus\om_{k,\e})}.
\end{equation*}
This inequality and (\ref{3.33}) yield (\ref{3.26}). The proof is complete.
\end{proof}

\begin{lemma}\label{lm3.3}
Under  Assumption~\ref{A1}
for all $k\in\mathds{M}^\e$ and all $u\in W_2^1(B_{R_3}(0)\setminus\om_k^\e)$ obeying the identity
\begin{equation}\label{3.1}
\int\limits_{B_{\e \eta  R_3}(M_k^\e)\setminus\om_k^\e} u(x)\,dx=0
\end{equation}
the estimates
\begin{align}\label{3.9}
&\|u\|_{L_2(B_{\e\eta R_3}(M_k^\e)\setminus\om_k^\e)}^2\leqslant C\e^2\eta^2 \|\nabla u\|_{L_2(B_{\e   \eta R_3}(M_k^\e)\setminus\om_k^\e)}^2,
\\
&\|u\|_{L_2(\p\om_k^\e)}^2\leqslant C\e\eta \|\nabla u\|_{L_2(B_{\e   \eta R_3}(M_k^\e)\setminus\om_k^\e)}^2,\label{3.17}
\end{align}
hold, where $C$ is a constant independent of the parameters $k$, $\e$, $\eta$ and the function $u$.
\end{lemma}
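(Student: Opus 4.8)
The plan is to reduce both estimates to the fixed reference configuration $B_{R_3}(0)\setminus\om_{k,\e}$ by the rescaling that defines the cavities in (\ref{2.4}), apply the zero-mean Poincar\'e inequality of Lemma~\ref{lm3.8} there, and then restore the correct powers of $\e\eta$. First I would set $x=M_k^\e+\e\eta y$ and $\tilde u(y):=u(M_k^\e+\e\eta y)$; this maps $B_{\e\eta R_3}(M_k^\e)\setminus\om_k^\e$ onto $B_{R_3}(0)\setminus\om_{k,\e}$ and, by the chain rule $\nabla_y\tilde u=\e\eta\,\nabla_x u$ together with the Jacobian $(\e\eta)^n$, gives the scaling relations
\begin{equation*}
\|u\|_{L_2(B_{\e\eta R_3}(M_k^\e)\setminus\om_k^\e)}^2=(\e\eta)^n\|\tilde u\|_{L_2(B_{R_3}(0)\setminus\om_{k,\e})}^2,\qquad \|\nabla u\|_{L_2(B_{\e\eta R_3}(M_k^\e)\setminus\om_k^\e)}^2=(\e\eta)^{n-2}\|\nabla\tilde u\|_{L_2(B_{R_3}(0)\setminus\om_{k,\e})}^2,
\end{equation*}
while the surface measure scales as $(\e\eta)^{n-1}$, so that $\|u\|_{L_2(\p\om_k^\e)}^2=(\e\eta)^{n-1}\|\tilde u\|_{L_2(\p\om_{k,\e})}^2$. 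Under the same change of variables the vanishing-mean hypothesis (\ref{3.1}) becomes $\int_{B_{R_3}(0)\setminus\om_{k,\e}}\tilde u\,dy=0$, which is precisely condition (\ref{3.24}).

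For estimate (\ref{3.9}) I would then apply Lemma~\ref{lm3.8} to $\tilde u$, giving $\|\tilde u\|_{L_2(B_{R_3}(0)\setminus\om_{k,\e})}^2\leqslant C\|\nabla\tilde u\|_{L_2(B_{R_3}(0)\setminus\om_{k,\e})}^2$ with $C$ uniform in $k$ and $\e$. Substituting the two scaling relations above, the factor $(\e\eta)^n$ on the left and $(\e\eta)^{n-2}$ on the right combine into the stated $(\e\eta)^2=\e^2\eta^2$, which is (\ref{3.9}).

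For the trace estimate (\ref{3.17}) I would first prove a uniform trace inequality in the reference cell,
\begin{equation*}
\|\tilde u\|_{L_2(\p\om_{k,\e})}^2\leqslant C\big(\|\nabla\tilde u\|_{L_2(B_{R_3}(0)\setminus\om_{k,\e})}^2+\|\tilde u\|_{L_2(B_{R_3}(0)\setminus\om_{k,\e})}^2\big),
\end{equation*}
by repeating the tubular-neighbourhood argument that already produced (\ref{3.48}): in the local variables $(\tau,s)$ near $\p\om_{k,\e}$ furnished by Assumption~\ref{A1} one writes a boundary value of $\tilde u$ as the $\tau$-integral of $\p_\tau(\tilde u\chi)$ over a layer of fixed width, applies the Cauchy--Schwarz inequality, and integrates in $s$. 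Feeding Lemma~\ref{lm3.8} back in to absorb the $\|\tilde u\|_{L_2}^2$ term into the gradient term reduces this to $\|\tilde u\|_{L_2(\p\om_{k,\e})}^2\leqslant C\|\nabla\tilde u\|_{L_2(B_{R_3}(0)\setminus\om_{k,\e})}^2$. Scaling back, the surface factor $(\e\eta)^{n-1}$ and the gradient factor $(\e\eta)^{n-2}$ leave the single power $(\e\eta)^1=\e\eta$, which is (\ref{3.17}).

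The routine part is the bookkeeping of powers of $\e\eta$; the genuinely delicate point is that every constant in the reference cell must be uniform in $k$ and $\e$. This is exactly what Assumption~\ref{A1} secures: the local coordinates $(\tau,s)$ live on a tubular neighbourhood of fixed width $\tau_0$ and both the associated Jacobians and the first derivatives of the maps $x\leftrightarrow(\tau,s)$ are bounded above and below away from zero uniformly in $k$ and $\e$, which is what keeps the trace constant uniform; the same geometric content of Assumption~\ref{A1} is what makes the constant in Lemma~\ref{lm3.8} uniform as well. I would also note that since the proof of Lemma~\ref{lm3.8} uses only this geometry and not the Dirichlet/Robin labelling, the reference-cell inequalities are in fact available for every $k\in\mathds{M}^\e$, as required here.
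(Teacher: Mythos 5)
Your proposal is correct and follows essentially the same route as the paper: rescale by $x=M_k^\e+\e\eta\xi$ to the reference configuration $B_{R_3}(0)\setminus\om_{k,\e}$, apply the uniform zero-mean Poincar\'e inequality of Lemma~\ref{lm3.8} for (\ref{3.9}), obtain the trace bound via the tubular-neighbourhood estimate (the paper simply cites (\ref{3.48}) where you re-derive it), and scale back to collect the powers $(\e\eta)^2$ and $\e\eta$. Your explicit remarks on the uniformity of constants and on the applicability of Lemma~\ref{lm3.8} to all $k\in\mathds{M}^\e$ (not just $k\in\mathds{M}_D^\e$) are sound and merely make precise what the paper leaves implicit.
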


\begin{proof}
Inequality (\ref{3.9}) is easily obtained by passing to the function $\tilde{u}(\xi):=u(M_k^\e+\e\eta \xi)$ and applying then estimate (\ref{3.26}) to this function as depending on the variable $\xi\in B_{R_3}(0)\setminus \om_{k,\e}$. Then we apply inequality (\ref{3.48}) to the function $\tilde{u}$ and integrate it over $\p\om_{k,\e}$ using the regularity properties of the boundary $\p\om_{k,\e}$. This gives:
\begin{equation}\label{3.34}
\|\tilde{u}\|_{L_2(\p\om_{k,\e})}\leqslant C \|\tilde{u}\|_{W_2^1(B_{R_3}(0)\setminus\om_{k,\e})}\leqslant C \|\nabla\tilde{u}\|_{L_2(B_{R_3}(0)\setminus\om_{k,\e})},
\end{equation}
where $C$ is some constant independent of $\e$, $k$, $\tilde{u}$. Returning back to the function $u$, we arrive at (\ref{3.17}). The proof is complete.
\end{proof}

\begin{lemma}\label{lm3.1}
Under Assumption~\ref{A1}
for all $k\in\mathds{M}_R^\e$ and all $u\in W_2^1(B_{\e R_3}(M_k^\e)\setminus\om_k^\e)$ the estimate
\begin{equation*}
\|u\|_{L_2(\p\om_k^\e)}^2 \leqslant  C\Big(\e\eta \vk\|\nabla u\|_{L_2(B_{\e R_3}(M_k^\e)\setminus\om_k^\e)}^2 +\e^{-1}\eta^{n-1}
\|u\|_{L_2(B_{\e R_3}(M_k^\e)\setminus B_{\e R_2}(M_k^\e))}^2\Big)
\end{equation*}
holds, where  $C$ is a constant independent of the parameters $k$, $\e$, $\eta$ and the function $u$.
\end{lemma}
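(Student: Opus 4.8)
The plan is to split $u$ into its mean over a small inner region and a remainder, dispose of the remainder by the trace estimate of Lemma~\ref{lm3.3}, and then control the mean by comparing it with the mean over the outer annulus, the comparison being governed by a radial telescoping argument that manufactures the factor $\vk$. First I would set $\bar u$ to be the mean value of $u$ over $B_{\e\eta R_3}(M_k^\e)\setminus\om_k^\e$. Then $u-\bar u$ obeys (\ref{3.1}), so (\ref{3.17}) applies to it and gives
$$\|u-\bar u\|_{L_2(\p\om_k^\e)}^2\leqslant C\e\eta\|\nabla u\|_{L_2(B_{\e\eta R_3}(M_k^\e)\setminus\om_k^\e)}^2\leqslant C\e\eta\vk\|\nabla u\|_{L_2(B_{\e R_3}(M_k^\e)\setminus\om_k^\e)}^2,$$
the last step using only $\vk\geqslant1$. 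Since $|\p\om_k^\e|=(\e\eta)^{n-1}|\p\om_{k,\e}|\leqslant C(\e\eta)^{n-1}$ by the uniform tubular regularity in Assumption~\ref{A1}, the constant part satisfies $\|\bar u\|_{L_2(\p\om_k^\e)}^2\leqslant C(\e\eta)^{n-1}|\bar u|^2$, so everything reduces to estimating $|\bar u|^2$.

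Next I would introduce the mean $\la u\ra$ of $u$ over the outer annulus $B_{\e R_3}(M_k^\e)\setminus B_{\e R_2}(M_k^\e)$, whose volume is $\sim\e^n$. The Cauchy--Schwarz inequality gives $|\la u\ra|^2\leqslant C\e^{-n}\|u\|_{L_2(B_{\e R_3}(M_k^\e)\setminus B_{\e R_2}(M_k^\e))}^2$, hence $(\e\eta)^{n-1}|\la u\ra|^2\leqslant C\e^{-1}\eta^{n-1}\|u\|_{L_2(B_{\e R_3}(M_k^\e)\setminus B_{\e R_2}(M_k^\e))}^2$, which is exactly the second term in the asserted estimate. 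Writing $|\bar u|^2\leqslant 2|\bar u-\la u\ra|^2+2|\la u\ra|^2$, the central remaining task is to prove
$$|\bar u-\la u\ra|^2\leqslant C(\e\eta)^{2-n}\vk\,\|\nabla u\|_{L_2(B_{\e R_3}(M_k^\e)\setminus\om_k^\e)}^2,$$
since multiplying by $(\e\eta)^{n-1}$ turns the right-hand side into $C\e\eta\vk\|\nabla u\|^2$, the first term in the assertion.

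To establish this inequality I would rescale to $\xi=(x-M_k^\e)(\e\eta)^{-1}$, setting $v(\xi):=u(M_k^\e+\e\eta\xi)$, so the cavity becomes $\om_{k,\e}\subset B_{R_2}(0)$, the inner region becomes $B_{R_3}(0)\setminus\om_{k,\e}$, the outer annulus becomes $B_{R_3/\eta}(0)\setminus B_{R_2/\eta}(0)$, and $\|\nabla v\|^2=(\e\eta)^{2-n}\|\nabla u\|^2$, while $\bar u,\la u\ra$ become the corresponding means $\la v\ra_1,\la v\ra_2$. I compare both with the spherical average $V(\rho):=\frac{1}{\mes_{n-1}(\p B_\rho(0))}\int_{\p B_\rho(0)}v\,dS$ at the fixed radius $R_5:=\frac{R_2+R_3}{2}$. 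For the inner mean, $\la v\ra_1-V(R_5)$ is the spherical average at $R_5$ of the mean-zero function $v-\la v\ra_1$, so the trace theorem on the fixed sphere $\p B_{R_5}(0)$ combined with Lemma~\ref{lm3.8} yields $|\la v\ra_1-V(R_5)|^2\leqslant C\|\nabla v\|_{L_2(B_{R_3}(0)\setminus\om_{k,\e})}^2$, with $C$ uniform in $k,\e$ because the sphere lies in the fixed annulus $B_{R_3}(0)\setminus B_{R_2}(0)$. For the outer mean, I write $\la v\ra_2$ as the $\rho^{n-1}$-weighted average of $V(\rho)$ over $\rho\in[R_2/\eta,R_3/\eta]$ and apply Jensen's inequality, reducing matters to $|V(\rho)-V(R_5)|^2$; the radial telescoping $V(\rho)-V(R_5)=\int_{R_5}^{\rho}V'(t)\,dt$ together with two uses of Cauchy--Schwarz gives $|V(\rho)-V(R_5)|^2\leqslant\big(\int_{R_5}^{\rho}t^{1-n}\,dt\big)\,C\,\|\nabla v\|_{L_2(B_{R_3/\eta}(0)\setminus\om_{k,\e})}^2$.

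The main obstacle is precisely the radial weight $\int_{R_5}^{\rho}t^{1-n}\,dt$: for $n\geqslant3$ it stays bounded uniformly in $\rho\leqslant R_3/\eta$ and $\eta\in(0,1]$ because $t^{1-n}$ is integrable at infinity, which accounts for $\vk=1$; for $n=2$ it equals $\ln(\rho/R_5)\leqslant\ln\big(R_3/(\eta R_5)\big)\leqslant C(|\ln\eta|+1)=C\vk$, which is exactly where the logarithm in $\vk$ originates. In both cases the weight is $\leqslant C\vk$, so $|V(\rho)-V(R_5)|^2\leqslant C\vk\|\nabla v\|^2$; combining with the inner estimate and $\vk\geqslant1$ gives $|\la v\ra_1-\la v\ra_2|^2\leqslant C\vk\|\nabla v\|^2$, and undoing the scaling produces the displayed bound on $|\bar u-\la u\ra|^2$. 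Collecting the three contributions then yields the statement. The only delicate points are the uniformity in $k$ and $\e$ of the Poincaré and trace constants, guaranteed by Lemma~\ref{lm3.8} and by the comparison sphere sitting in a fixed annulus, and the uniform bound $|\p\om_{k,\e}|\leqslant C$, which follows from the uniform tubular-coordinate regularity postulated in Assumption~\ref{A1}.
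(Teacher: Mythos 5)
Your argument is correct, and it takes a genuinely different route from the paper's proof in its decomposition and in the lemmas it leans on, even though both proofs manufacture $\vk$ by the same underlying mechanism: a one-dimensional radial Cauchy--Schwarz estimate across the scale gap between $\e\eta$ and $\e$, with the weight $\int t^{1-n}\,dt$ bounded uniformly for $n\geqslant 3$ and of order $|\ln\eta|+1$ for $n=2$. The paper proceeds via cut-offs: it multiplies the rescaled function by $\chi_3$ so as to obtain an element of $\Ho_2^1(B_{R_3}(0)\setminus\om_{k,\e},\p B_{R_3}(0))$, applies the trace bound (\ref{3.34}) together with the Dirichlet-type Poincar\'e inequality of Lemma~\ref{lm3.0} to reach (\ref{3.3}), and then pays for the cut-off --- the $L_2$-norm over the small annulus $B_{\e\eta R_5}(M_k^\e)\setminus B_{\e\eta R_2}(M_k^\e)$ --- by the pointwise radial integration (\ref{4.12}) involving a second cut-off $\chi_4$. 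You instead avoid cut-offs altogether: you split off the mean $\bar{u}$ over $B_{\e\eta R_3}(M_k^\e)\setminus\om_k^\e$, dispose of the fluctuation on $\p\om_k^\e$ via (\ref{3.17}) (hence via the mean-zero Poincar\'e inequality of Lemma~\ref{lm3.8}), and transport the constant mode to the outer annulus by telescoping spherical averages, so that the radial weight is integrated against $V'$ rather than applied pointwise to $\p u/\p r$. Each route has its advantages: the paper's pointwise inequality (\ref{4.12}) is recycled essentially verbatim later in the derivation of (\ref{4.9}), while your version estimates only averages, produces no cut-off commutator terms, and anticipates exactly the $\la u_\e\ra$, $u_\e^\bot$ decomposition (\ref{4.1}) used in the proof of Theorem~\ref{th1}. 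Two small points to tidy up: Lemma~\ref{lm3.8} is stated for $k\in\mathds{M}_D^\e$, but its proof uses nothing beyond Assumption~\ref{A1} and works for all $k\in\mathds{M}^\e$ (the paper itself invokes it through Lemma~\ref{lm3.3} for all $k$), so your appeal to it for $k\in\mathds{M}_R^\e$ is legitimate; and when $\eta$ is close to $1$ the inner radius $R_2/\eta$ of the rescaled outer annulus may drop below $R_5$, so your telescoping integral must be read with absolute values, in which case it is bounded by $\int_{R_2}^{R_5}t^{1-n}\,dt\leqslant C$ and the claimed bound by $C\vk$ persists.
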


\begin{proof}
In the case $n\geqslant 3$, this lemma was proved in \cite{MSB21}, see  Lemma~3.3 in this work. Since here we also deal with the case $n=2$, we reproduce the proof of this lemma taking into consideration the mentioned case. We fix $k\in\mathds{M}_R^\e$ and all $u\in W_2^1(B_{\e R_3}(M_k^\e)\setminus\om_k^\e)$ and let $\chi_{3}=\chi_{3}(t)$ be an infinitely differentiable cut-off function equalling to one as $t\leqslant R_2$ and vanishing as $t\geqslant R_5$. Then the function
\begin{equation*}
\tilde{u}(\xi):=\chi_{3}\big(|x-M_k^\e|\e^{-1}\eta^{-1}\big) u(M_k^\e+\e\eta\xi)
\end{equation*}
is an element of $\Ho_2^1(B_{R_3}(0)\setminus\om_{k,\e},\p B_{R_3}(0))$. Applying estimate (\ref{3.34}) and Lemma~\ref{lm3.0}, we obtain:
\begin{equation*}
\|\tilde{u}\|_{L_2(B_{R_3}(0)\setminus \om_{k,\e})} \leqslant C \|\nabla_\xi\tilde{u}\|_{L_2(B_{R_3}(0)\setminus \om_{k,\e})};
\end{equation*}
throughout  the proof by  $C$  we denote various inessential  constants independent of $\e$, $\eta$, $k$, $u$ and $\tilde{u}$.
Returning back to the function $u$, we obtain:
\begin{equation}\label{3.3}
\|u\|_{L_2(\p\om_k^\e)}^2\leqslant C\Big( \e\eta\|\nabla u\|_{L_2(B_{\e\eta R_3}(0)\setminus \om_k^\e)}^2 +  \e^{-1}\eta^{-1}\|u\|_{L_2(B_{\e\eta R_5}(0)\setminus B_{\e\eta R_2}(0)}^2\Big).
\end{equation}

By $\chi_{4}=\chi_{4}(t)$ we denote one more infinitely differentiable cut-off function equalling to one as $t\leqslant R_5$ and vanishing as $t\geqslant R_3$. For $x\in B_{\e\eta R_5}(M_k^\e)$ we have:
\begin{equation*}
u(x)=\int\limits_{\e R_3}^{|x-M_k^\e|}  \frac{\p\ }{\p r} u(x)\chi_{4}(r\e^{-1})\,dr,
\end{equation*}
where $r$ is the radius in the spherical coordinates centered as $M_k^\e$. By the Cauchy-Schwarz inequality for $x\in B_{\e\eta R_5}(M_k^\e)$ we then obtain:
\begin{equation}\label{4.12}
\begin{aligned}
|u(x)|^2\leqslant & C \int\limits_{\e R_3}^{|x-M_k^\e|} \frac{dt}{t^{n-1}}
\int\limits_{\e\eta R_5}^{\e R_3} |\nabla u|^2 t^{n-1}\,dt + C
\e^{-2} \int\limits_{\e R_5}^{\e R_3} \frac{dt}{t^{n-1}} \int\limits_{\e R_5}^{\e R_3} |u|^2 t^{n-1}\,dt
\\
\leqslant & C \e^{-n+2}\eta^{-n+2}\vk \int\limits_{\e\eta R_5}^{\e R_3} |\nabla u|^2 t^{n-1}\,dt + C\e^{-n} \int\limits_{\e R_5}^{\e R_3} |u|^2 t^{n-1}\,dt.
\end{aligned}
\end{equation}
We integrate the obtained inequality over $B_{\e\eta R_5(M_k^\e)}\setminus B_{\e\eta R_2(M_k^\e)}$ and we get:
\begin{equation*}
\|u\|_{L_2(B_{\e\eta R_5(M_k^\e)}\setminus B_{\e\eta R_2(M_k^\e)})}^2
\leqslant C \e^2\eta^2 \vk \|\nabla u\|_{L_2(B_{\e R_3}(M_k^\e)\setminus\om_k^\e)}^2 + C \eta^n \|u\|_{L_2(B_{\e R_3}(M_k^\e)\setminus\om_k^\e)}^2.
\end{equation*}
This estimate and (\ref{3.3}) give the desired estimate in the statement of the lemma.
\end{proof}

\subsection{Proof of Theorem~\ref{th1}}

We begin the proof with checking  the solvability of problem (\ref{2.7}) for appropriate values of $\l$.

\begin{lemma}\label{lm4.1} Under Assumption~\ref{A1} there exists $\l_0\in\mathds{R}$ independent of $\e$ such that for $\RE\l<\l_0$ problem (\ref{2.7}) is   solvable in $\Ho_2^1(\Om^\e,\p\Om\cup\p\tht_D^\e)$ for each $f\in L_2(\Om^\e)$.
\end{lemma}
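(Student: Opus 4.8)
The plan is to recast the integral identity (\ref{2.18}) as a nonlinear operator equation and to solve it by a Galerkin scheme: the coercivity of the form on its left-hand side will supply both the finite-dimensional solvability and a uniform a priori bound, and a localization argument will handle the passage to the limit in the nonlinear boundary term. I would begin by establishing this coercivity. Testing with $v=u$ and taking the real part, I bound $\RE\hf_A(u,u)$ from below: the Hermitian ellipticity (\ref{2.5}) gives $c_0\|\nabla u\|_{L_2(\Om^\e)}^2$ for the principal part, the first-order terms are absorbed by Young's inequality into $\tfrac{c_0}{2}\|\nabla u\|_{L_2(\Om^\e)}^2$ plus a multiple of $\|u\|_{L_2(\Om^\e)}^2$, and the zeroth-order term is controlled by $\|A_0^\e\|_{L_\infty}\|u\|_{L_2(\Om^\e)}^2$. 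For the nonlinear boundary term the sign condition (\ref{2.11}) yields
\[
\RE\big(a^\e(\,\cdot\,,u),u\big)_{L_2(\p\tht_R^\e)}\geqslant -c_1\e\eta^{-n+1}\|u\|_{L_2(\p\tht_R^\e)}^2 ,
\]
and summing the trace estimate of Lemma~\ref{lm3.1} over $k\in\mathds{M}_R^\e$ — legitimate because the balls $B_{\e R_3}(M_k^\e)$ are pairwise disjoint by Assumption~\ref{A1} — bounds $\|u\|_{L_2(\p\tht_R^\e)}^2$ by $C(\e\eta\vk\|\nabla u\|_{L_2(\Om^\e)}^2+\e^{-1}\eta^{n-1}\|u\|_{L_2(\Om^\e)}^2)$. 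Multiplying by $c_1\e\eta^{-n+1}$ produces a term $Cc_1\e^2\eta^{-n+2}\vk\|\nabla u\|_{L_2(\Om^\e)}^2$, absorbed into the principal part for $\e$ small (the only place where smallness of $\e$ enters), plus a further multiple of $\|u\|_{L_2(\Om^\e)}^2$. Collecting these bounds I obtain
\[
\RE\big(\hf_a(u,u)-\l\|u\|_{L_2(\Om^\e)}^2\big)\geqslant \tfrac{c_0}{4}\|\nabla u\|_{L_2(\Om^\e)}^2-(C'+\RE\l)\|u\|_{L_2(\Om^\e)}^2,
\]
with $C'$ independent of $\e$; choosing $\l_0:=-C'$ gives coercivity $\RE(\hf_a(u,u)-\l\|u\|^2_{L_2(\Om^\e)})\geqslant c\|u\|_{W_2^1(\Om^\e)}^2$ for $\RE\l<\l_0$.

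Next I fix a countable system $\{w_m\}$ of functions with bounded supports that is total in $\Ho_2^1(\Om^\e,\p\Om\cup\p\tht_D^\e)$, and for each $N$ I seek $u_N\in\mathrm{span}\{w_1,\dots,w_N\}$ satisfying (\ref{2.18}) for $v=w_1,\dots,w_N$. Regarding the complex coefficients of $u_N$ as a point of $\mathds{R}^{2N}$, the map sending $u_N$ to the residuals $\hf_a(u_N,w_m)-\l(u_N,w_m)_{L_2(\Om^\e)}-(f,w_m)_{L_2(\Om^\e)}$ is continuous, by the Lipschitz bound (\ref{2.6}) on $a^\e$. Pairing these residuals with the coefficients of $u_N$ and taking the real part gives $\RE(\hf_a(u_N,u_N)-\l\|u_N\|^2_{L_2(\Om^\e)})-\RE(f,u_N)_{L_2(\Om^\e)}$, which by the coercivity above is strictly positive once $\|u_N\|_{W_2^1(\Om^\e)}$ is large enough. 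The standard corollary of Brouwer's fixed-point theorem then yields a solution $u_N$, and testing the Galerkin identity with $u_N$ and invoking coercivity once more gives the bound $\|u_N\|_{W_2^1(\Om^\e)}\leqslant C\|f\|_{L_2(\Om^\e)}$ uniform in $N$.

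The main difficulty, and the last step, is to pass to the limit $N\to\infty$ in the nonlinear term $\big(a^\e(\,\cdot\,,u_N),w_m\big)_{L_2(\p\tht_R^\e)}$. Extracting a subsequence with $u_N\rightharpoonup u$ in $\Ho_2^1(\Om^\e,\p\Om\cup\p\tht_D^\e)$, the linear terms pass to the limit at once. For the nonlinear term I exploit that each $w_m$ has bounded support: its contribution lives on $\p\tht_R^\e\cap\{|x|\leqslant R\}$, which by Assumption~\ref{A1} consists of finitely many boundaries $\p\om_k^\e$ and is thus a bounded $C^1$-surface on which the trace operator is compact. Hence $u_N\to u$ strongly in $L_2(\p\tht_R^\e\cap\{|x|\leqslant R\})$, and the Lipschitz bound (\ref{2.6}) gives $a^\e(\,\cdot\,,u_N)\to a^\e(\,\cdot\,,u)$ there, so the nonlinear term converges. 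Therefore $u$ satisfies (\ref{2.18}) for every $w_m$, hence for every $v$ in their total span, and since both sides of (\ref{2.18}) are continuous in $v$ for fixed $u$, the identity extends to all $v\in\Ho_2^1(\Om^\e,\p\Om\cup\p\tht_D^\e)$, giving the required generalized solution. The crucial feature that makes the argument work on a possibly unbounded $\Om$ — where the global trace $W_2^1(\Om^\e)\to L_2(\p\tht_R^\e)$ fails to be compact, so that a naive weak-limit argument with arbitrary test functions would break down — is precisely this restriction to compactly supported test functions combined with the local compactness guaranteed by Assumption~\ref{A1}.
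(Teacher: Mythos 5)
Your proof is correct, and its analytic core coincides with the paper's: you derive the coercivity estimate exactly as the paper does, from the ellipticity in \eqref{2.5}, Young's inequality for the lower-order terms, the sign condition \eqref{2.11}, and the trace estimate of Lemma~\ref{lm3.1} summed over $k\in\mathds{M}_R^\e$ using the disjointness of the balls $B_{\e R_3}(M_k^\e)$ from Assumption~\ref{A1}, with the resulting gradient contribution $C\e^2\eta^{-n+2}\vk\,\|\nabla u\|_{L_2(\Om^\e)}^2$ absorbed into the principal part. Where you genuinely diverge is the solvability machinery: the paper verifies two hypotheses --- continuity of $t\mapsto\hf_a(u+tv,w)$ via the Lipschitz bound \eqref{2.6}, and the coercivity condition --- and then cites abstract existence results of Browder--Minty type from \cite{Vain}, \cite{Dub}, whereas you rerun the underlying mechanism by hand: a Galerkin scheme, the acute-angle corollary of Brouwer's theorem for the finite-dimensional systems, the uniform a priori bound from coercivity, and a limit passage in the nonlinear boundary term based on compactly supported test functions together with local compactness of the trace on the finitely many cavity boundaries meeting a bounded set (finiteness again by Assumption~\ref{A1}). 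This buys a self-contained argument that makes explicit a point the citation leaves implicit, namely how a merely Lipschitz, non-monotone boundary nonlinearity is handled on a possibly unbounded $\Om$, where the global trace $W_2^1(\Om^\e)\to L_2(\p\tht_R^\e)$ is not compact; the paper's route is of course shorter, and both routes, consistently, yield solvability without uniqueness. Two small remarks for precision rather than gaps: your absorption step actually needs smallness of the product $\e^2\eta^{-n+2}\vk$, i.e.\ condition \eqref{2.14} (or \eqref{2.31}), not smallness of $\e$ alone, since $\eta$ may tend to zero for $n\geqslant 3$ --- the paper's own proof invokes \eqref{2.14} at exactly this spot; and for the pairing $(a^\e(\,\cdot\,,u),v)_{L_2(\p\tht_R^\e)}$ to be finite one needs $a^\e(\,\cdot\,,0)\in L_2(\p\tht_R^\e)$ (for instance $a^\e(x,0)=0$, as in all the paper's examples), an assumption equally implicit in the paper's definition of the form $\hf_a$.
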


\begin{proof}
The proof is based on applying standard technique similar to the Browder-Minti theory. Namely, according to the general results in \cite[Ch. V\!I, Sect. 18.4]{Vain}, \cite[Ch. 1, Sect. 1.2$^0$]{Dub}, the solvability (not necessarily unique!) of problem (\ref{2.7}) is ensured by the following conditions:

\begin{enumerate}
\item\label{it1} For all $u,v,w\in \Ho_2^1(\Om^\e,\p\Om\cup\p\tht_D^\e)$ the scalar function $t\mapsto \hf_a(u+tv,w)$ is continuous;

\item\label{it3} The convergence holds:
\begin{equation*}
\frac{\RE \big(\hf_a(u,u)-\l\|u\|_{L_2(\Om^\e)}^2\big)}{
\|u\|_{W_2^1(\Om^\e)}}\to+\infty\qquad\text{as}\qquad \|u\|_{W_2^1(\Om^\e)}\to+\infty.
\end{equation*}
\end{enumerate}

Let us check these conditions. It follows from the definition of the form $\hf_a(u,v)$ that for arbitrary $t_1<t_2$ we have
\begin{equation}\label{4.2}
\hf_a(u+t_2 v,w)-\hf_a(u+t_1 v,w)=(t_2-t_1) \hf_A(v,w) + \big( a^\e(\,\cdot\,,u+t_2 v)-a^\e(\,\cdot\,,u+t_1 v),w\big)_{L_2(\p\tht_R^\e)}.
\end{equation}
Condition (\ref{2.6}) then yields
\begin{equation*}
\left|a^\e(\,\cdot\,,u+t_2 v)-a^\e(\,\cdot\,,u+t_1 v)\right|
\leqslant  a_0^\e|v|(t_2-t_1).
\end{equation*}
This estimate and (\ref{4.2}) imply condition~\ref{it1}.

In order to check  condition 2, we first observe that due to our assumptions on the coefficients $A_{ij}^\e$, $A_j^\e$, $A_0^\e$ the estimate
\begin{equation}\label{4.3}
\RE \hf_A(u,u)\geqslant \frac{3c_0}{4} \|\nabla u\|_{L_2(\Om^\e)}^2 - C \|u\|_{L_2(\Om^\e)}^2
\end{equation}
holds, where $C$ is some absolute constant independent of $\e$ and $u\in W_2^1(\Om^\e)$. It also follows from (\ref{2.11}) and Lemma~\ref{lm3.1}
that
\begin{align*}
\RE \big(a^\e(\,\cdot\,,u),u\big)_{L_2(\p\tht_R^\e)} \geqslant & - c_1
\e\eta^{-n+1} \sum\limits_{k\in\mathds{M}^\e_R} \|u\|_{L_2(\p\om_k^\e)}^2
\\
\geqslant & -C  \Big(\e^2\eta^{-n+2}\vk\|\nabla u\|_{L_2(B_{\e R_3}(M_k^\e)\setminus\om_k^\e)}^2 +
\|u\|_{L_2(B_{\e R_3}(M_k^\e)\setminus B_{\e R_2}(M_k^\e))}^2\Big),
\end{align*}
where $C$ is some fixed constant independent of $\e$, $\eta$ and $u\in W_2^1(\Om^\e)$. According to condition~(\ref{2.14}) we have $\e^2\eta^{-n+2}\vk\to 0$ as $\e\to+0$.
Hence, the above estimate and (\ref{4.3}) yield:
\begin{equation*}
\RE \hf_a(u,u) \geqslant  \frac{c_0}{2} \|\nabla u\|_{L_2(\Om^\e)}^2 - C \|u\|_{L_2(\Om^\e)}^2,
\end{equation*}
where $C$ is some fixed constant independent of $\e$, $\eta$ and $u\in W_2^1(\Om^\e)$. The obtained inequality implies condition~\ref{it3} once we choose $\RE\l\leqslant\l_0< C-1$. The proof is complete.
\end{proof}

We proceed to proving estimates (\ref{2.16}), (\ref{2.17}). We write integral identity (\ref{2.18}) using $u_\e$ as a test function and take then the real part of the obtained relation. The result can be written as
\begin{equation}\label{3.6}
\begin{aligned}
\RE\hf_A(u_\e,u_\e)&+\sum\limits_{k\in\mathds{M}_R^\e\setminus \mathds{M}_{R,0}^\e} \big(a^\e(\,\cdot\,,u_\e),u_\e\big)_{L_2(\p\om_k^\e)}
\\
&+
\sum\limits_{k\in \mathds{M}_{R,0}^\e} \big(a^\e(\,\cdot\,,u_\e),u_\e\big)_{L_2(\p\om_k^\e)}
-\RE\l\|u_\e\|_{L_2(\Om^\e)}^2=\RE(f,u_\e)_{L_2(\Om^\e)}.
\end{aligned}
\end{equation}
Then it follows from inequality (\ref{4.3}), (\ref{2.11}), (\ref{2.25}) and Lemma~\ref{lm3.1} with $\d=\frac{c_0}{4}$ that for the mentioned choice of the constant $\RE\l<\l_0$ the inequality holds:
\begin{equation}\label{4.4}
\frac{c_0}{4} \|\nabla u_\e\|_{L_2(\Om^\e)}^2 + \|u_\e\|_{L_2(\Om^\e)}^2
+ \mu(\e)
\sum\limits_{k\in \mathds{M}_{R,0}^\e} \big(\a_k^\e u_\e,u_\e\big)_{L_2(\p\tht_R^\e)}\leqslant \|f\|_{L_2(\Om^\e)} \|u_\e\|_{L_2(\Om^\e)}.
\end{equation}

Our next step is to estimate the norm $\|u_\e\|_{L_2(\Om^\e)}$. The main idea is to cover the domain $\Om^\e$ by balls $B_{\e R_4}(M_k^\e)$, which is possible owing to Assumption~\ref{A5}, and then to make appropriate local estimates on each such ball intersected with $\Om^\e$.

For $k\in\mathds{M}_R^\e$ we denote
\begin{equation}\label{4.1}
\la u_\e\ra:=\frac{1}{\mes_n B_{\e\eta R_3}(M_k^\e)\setminus\om_k^\e} \int\limits_{B_{\e R_3}(M_k^\e)\setminus \om_k^\e} u_\e\,dx, \qquad
u_\e^\bot:=u_\e-\la u_\e\ra.
\end{equation}
It is clear that the function $u_{\e,k}$ satisfies condition (\ref{3.1}) and hence, by Lemmata~\ref{lm3.3},~\ref{lm3.1},
\begin{align}\label{4.8}
&\|u_\e^\bot\|_{L_2(B_{\e\eta R_3}(M_k^\e)\setminus\om_{k,\e})}^2 \leqslant C\e^2\eta^2 \|\nabla u_\e\|_{L_2(B_{\e\eta R_3}(M_k^\e)\setminus\om_{k,\e})}^2,
\\
&\|u_\e^\bot\|_{L_2(\p\om_{k,\e})}^2 \leqslant C\e\eta \|\nabla u_\e\|_{L_2(B_{\e\eta R_3}(M_k^\e)\setminus\om_{k,\e})}^2,\label{3.12a}
\end{align}
hereinafter in the proof by $C$ we denote various inessential constants  independent of $\e$, $\eta$, $k$, $u_\e$ and spatial variables.

Employing assumptions~(\ref{2.13a}) and Cauchy-Schwarz inequality, we obtain:
\begin{align*}
\int\limits_{\p\om_k^\e} \a_k^\e |u_\e|^2 \,ds= & |\la u_\e\ra|^2 \int\limits_{\p\om_k^\e} \a_k^\e \,ds + 2 \RE \overline{\la u_\e\ra} \int\limits_{\p\om_k^\e} \a_k^\e u_\e^\bot\,ds + \int\limits_{\p\om_k^\e} \a_k^\e |u_\e^\bot|^2 \,ds
\\
\geqslant &  c_3  (\e\eta)^{n-1} |\la u_\e\ra|^2 -2|\la u_\e\ra| \int\limits_{\p\om_k^\e} \a_k^\e |u_\e^\bot|\,ds
\\
\geqslant & \frac{ c_3 }{2}(\e\eta)^{n-1} |\la u_\e\ra|^2 -\frac{2}{c_3}(\e\eta)^{-n+1}
\Bigg(\int\limits_{\p\om_k^\e} \a_k^\e |u_\e^\bot|\,ds\Bigg)^2
\\
\geqslant & \frac{c_3}{2}(\e\eta)^{n-1} |\la u_\e\ra|^2 - \frac{2c_2}{c_3}
\|u_\e^\bot\|_{L_2(\p\om_k^\e)}^2,
\end{align*}
and therefore,
\begin{equation*}
\e^n\eta^n |\la u_\e\ra|^2 \leqslant C \e\eta\Bigg( \int\limits_{\p\om_k^\e} \a_k^\e |u_\e|^2 \,ds + \|u_\e^\bot\|_{L_2(\p\om_k^\e)}^2\Bigg).
\end{equation*}
This estimate and  (\ref{3.12a}),
 (\ref{4.8}) yield:
\begin{equation}\label{4.7}
\begin{aligned}
\|u_\e\|_{L_2(B_{\e\eta R_3}(M_k^\e)\setminus\om_{k,\e})}^2 \leqslant &2
|\la u_\e\ra|^2 \mes_n (B_{\e\eta R_3}(M_k^\e)\setminus\om_{k,\e})
+2
\|u_\e^\bot\|_{L_2(B_{\e\eta R_3}(M_k^\e)\setminus\om_{k,\e})}^2
\\
\leqslant& C   \Bigg(\e\eta \int\limits_{\p\om_k^\e} \a_k^\e |u_\e|^2\,ds + \e^2\eta^2\|\nabla u\|_{L_2(B_{\e R_3}(M_k^\e)\setminus\om_k^\e)}^2 \Bigg).
\end{aligned}
\end{equation}

Let $\chi_{5}=\chi_{5}(t)$ be an infinitely differentiable cut-off function equalling to one as $t>R_3$ and vanishing as $t<R_2$. For $x\in B_{\e R_3}(M_k^\e)\setminus B_{\e\eta R_3}(M_k^\e)$ we have an obvious formula:
\begin{equation*}
u(x)=\int\limits_{\e\eta R_2}^{|x-M_k^\e|} \frac{\p\ }{\p r} u\chi_{5}(r\e^{-1}\eta^{-1})\,dr,
\end{equation*}
where $r$ is the radius in the spherical coordinates centered at $M_k^\e$. Proceeding then as in (\ref{4.12}), we easily obtain:
\begin{equation*}
|u(x)|^2\leqslant C\e^{-n+2}\eta^{-n+2}\vk \int\limits_{\e\eta R_2}^{\e R_3} |\nabla u|^2 r^{n-1}\,dr +
\e^{-n}\eta^{-n} \int\limits_{\e\eta R_2}^{\e\eta R_3} |u|^2 r^{n-1}\,dr.
\end{equation*}
Integrating this estimate over $B_{\e R_3}(M_k^\e)\setminus B_{\e\eta R_3}(M_k^\e)$ and using (\ref{4.7}), we get
\begin{equation}\label{4.9}
\begin{aligned}
\|u_\e\|_{L_2(B_{\e R_3}(M_k^\e)\setminus\om_{k,\e})}^2\leqslant & C \Big(\e^2\eta^{-n+2}\vk \|\nabla u\|_{L_2(B_{\e R_3}(M_k^\e)\setminus\om_{k,\e})}^2+\eta^{-n}\|u\|_{L_2(B_{\e\eta R_3}(M_k^\e)\setminus B_{\e\eta R_2}(M_k^\e))}^2
\Big)
\\
\leqslant & C   \Bigg(\e\eta^{-n+1} \int\limits_{\p\om_k^\e} \a_k^\e |u_\e|^2\,ds + \e^2\eta^{-n+2}\vk\|\nabla u\|_{L_2(B_{\e R_3}(M_k^\e)\setminus\om_k^\e)}^2 \Bigg),\qquad k\in\mathds{M}^\e_{R,0}.
\end{aligned}
\end{equation}
By Lemma~\ref{lm3.4} we have a similar estimate:
\begin{equation}\label{4.6}
\|u_\e\|_{L_2(B_{\e R_3}(M_k^\e)\setminus\om_{k,\e})}^2
\leqslant C \e^2\eta^{-n+2}\vk   \|\nabla u\|_{L_2(B_{\e R_3}(M_k^\e)\setminus\om_k^\e)}^2,\qquad k\in\mathds{M}^\e_D.
\end{equation}

Covering (\ref{2.21}) yields that
\begin{equation}\label{4.5}
\|u_\e\|_{L_2(\Om^\e)}^2\leqslant \sum\limits_{k\in\mathds{M}_{R,0}^\e \cup \mathds{M}_D^\e} \|u_\e\|_{L_2(B_{\e R_4}(M_k^\e)\setminus\tht^\e)}^2.
\end{equation}
The sets $B_{\e R_4}(M_k^\e)\setminus\tht^\e$ not necessarily coincide with $B_{\e R_4}(M_k^\e)\setminus\om_k^\e$ but can have a more complicated shape. The reason is that for a given $k$, the ball $B_{\e R_4}(M_k^\e)$ can have a non-zero intersection with other cavities $\om_j^\e$, $j\ne k$, see Figure~\ref{fig3}. To overcome technical difficulties in local estimates related to possible shapes of the domains $B_{\e R_4}(M_k^\e)\setminus\om_k^\e$, we make an auxiliary continuation of the function $u_\e$. For $k\in\mathds{M}_D^\e$ we simply continue the function $u^\e$ by zero inside the cavities $\om_{k,\e}$:
\begin{equation*}
u_\e(x):=0\quad\text{in}\quad \om_k^\e,\quad k\in\mathds{M}_D^\e.
\end{equation*}
For $k\in\mathds{M}_R^\e$ the aforementioned continuation is made as follows:
\begin{align*}
&u_\e(\tau,s):=\la u_\e \ra + u_\e^\bot(-\tau,s)\chi_{2}(\tau\e^{-1}\eta^{-1}) && \text{for}\quad
x\in\om_k^\e,\quad \dist(x,\p\om_k^\e)\leqslant \e\eta\tau_0,
\\
&u_\e(\tau,s):=\la u_\e \ra && \text{for}\quad
x\in\om_k^\e,\quad \dist(x,\p\om_k^\e)>\e\eta\tau_0,
\end{align*}
with $\la u_\e \ra$ and $u_\e^\bot$ defined in (\ref{4.1}) for the chosen $k$ and $\chi_{2}$ is the cut-off function introduced in the proof of Lemma~\ref{lm3.0}. In view of (\ref{4.1}), the resulting continued function belongs to $W_2^1(B_{\e R_3}(M_k^\e))$ for each $k\in\mathds{M}^\e$ and due to estimate (\ref{3.9}) we have:
\begin{equation}\label{4.14}
\begin{aligned}
&\|u_\e\|_{L_2(\om_k^\e)}^2\leqslant  C \Big(|\la u_\e\ra|^2\mes \om_k^\e + \|u_\e^\bot\|_{L_2(B_{\e\eta R_2(M_k^\e)}\setminus\om_k^\e)}^2\Big)\leqslant C\|u_\e\|_{L_2(B_{\e\eta R_2(M_k^\e)}\setminus\om_k^\e)}^2,
\\
&\|\nabla u_\e\|_{L_2(\om_k^\e)}^2\leqslant  C \Big(\e^{-2}\eta^{-2}\|u_\e^\bot\|_{L_2(B_{\e\eta R_2(M_k^\e)}\setminus\om_k^\e)}^2
 + \|\nabla u_\e\|_{L_2(B_{\e\eta R_2(M_k^\e)}\setminus\om_k^\e)}^2\Big)
\\
&\hphantom{\|\nabla u_\e\|_{L_2(\om_k^\e)}^2} \leqslant C\|\nabla u_\e\|_{L_2(B_{\e\eta R_2(M_k^\e)}\setminus\om_k^\e)}^2.
\end{aligned}
\end{equation}
We also continue $u_\e$ it by zero outside $\p\Om$.
The continued function $u_\e$ is defined on entire $\Om$ and in a fixed neighbourhood of $\Om$  and in view of the above estimates it satisfies
\begin{equation*}
\|u_\e\|_{L_2(\Om)}\leqslant C \|u_\e\|_{L_2(\Om^\e)},\qquad \|\nabla u_\e\|_{L_2(\Om)}\leqslant C \|\nabla u_\e\|_{L_2(\Om^\e)}.
\end{equation*}
Hence, we can rewrite estimate (\ref{4.5}) as
\begin{equation}\label{4.16}
\|u_\e\|_{L_2(\Om^\e)}^2\leqslant \sum\limits_{k\in\mathds{M}_{R,0}^\e \cup \mathds{M}_D^\e} \|u_\e\|_{L_2(B_{\e R_4}(M_k^\e))}^2,
\end{equation}
where for the balls $B_{\e R_4}(M_k^\e)$ intersecting the boundary of $\p\Om$ in the corresponding norms we mean the above made continuation of $u_\e$ by zero outside $\Om$.

For each $k\in\mathds{M}_D^\e\cup\mathds{M}_{R,0}^\e$ the function $u_\e(x)\chi_{5}(|x-M_k^\e|\e^{-1})$ belongs to $W_2^1(B_{\e R_4}(M_k^\e)\setminus\om_k^\e)$,  vanishes  on $B_{\e R_2}(M_k^\e)$  and   coincides with $u_\e$ in the $B_{\e R_4}(M_k^\e)\setminus B_{\e R_3}(M_k^\e)$. Hence, by Lemma~\ref{lm3.4} with $R_3$ replaced by $R_4$, $\eta=1$ and $\om_{k,\e}$ replaced by $B_{R_2}(0)$, for each $k\in\mathds{M}_D^\e\cup\mathds{M}_{R,0}^\e$ we have:
\begin{align*}
\|u_\e\|_{L_2(B_{\e R_4}(M_k^\e)\setminus B_{\e R_3}(M_k^\e))}^2 \leqslant &
\|u_\e\chi_{5}(|\cdot-M_k^\e|\e^{-1})\|_{L_2(B_{\e R_4}(M_k^\e)\setminus B_{\e R_2}(M_k^\e))}^2
\\
\leqslant & C \e^2\|\nabla u_\e\chi_{5}(|\cdot-M_k^\e|\e^{-1})\|_{L_2(B_{\e R_4}(M_k^\e)\setminus B_{\e R_2}(M_k^\e))}^2
\\
\leqslant & C \Big(\e^2\|\nabla u_\e\|_{L_2(B_{\e R_4}(M_k^\e)\setminus B_{\e R_2}(M_k^\e))}^2 + \|u_\e\|_{L_2(B_{\e R_3}(M_k^\e)\setminus B_{\e R_2}(M_k^\e))}^2
\Big).
\end{align*}
Hence, by (\ref{4.9}), (\ref{4.6}), (\ref{4.14}),
\begin{align*}
& \|u_\e\|_{L_2(B_{\e R_4}(M_k^\e))}^2 \leqslant C   \Bigg(\e\eta^{-n+1} \int\limits_{\p\om_k^\e} \a_k^\e |u_\e|^2\,ds + \e^2\eta^{-n+2}\vk\|\nabla u\|_{L_2(B_{\e R_3}(M_k^\e)\setminus\om_k^\e)}^2 \Bigg),\qquad k\in\mathds{M}^\e_{R,0},
\\
&\|u_\e\|_{L_2(B_{\e R_4}(M_k^\e))}^2
\leqslant C \e^2\eta^{-n+2}\vk   \|\nabla u\|_{L_2(B_{\e R_3}(M_k^\e)\setminus\om_k^\e)}^2,\qquad k\in\mathds{M}^\e_D.
\end{align*}
We sum up the obtained estimate over $k\in\mathds{M}_{R,0}^\e\cup\mathds{M}_D^\e$ and by (\ref{4.16}) we find:
\begin{equation}\label{4.10}
\begin{aligned}
\|u_\e\|_{L_2(\Om^\e)}^2\leqslant &
\sum\limits_{k\in\mathds{M}_{R,0}^\e \cup \mathds{M}_D^\e} \|u_\e\|_{L_2(B_{\e R_4}(M_k^\e))}^2
\\
\leqslant & C \e\eta^{-n+1} \sum\limits_{k\in\mathds{M}_{R,0}^\e} \int\limits_{\p\om_k^\e} \a_k^\e |u_\e|^2\,ds + C\e^2\eta^{-n+2}\vk \sum\limits_{k\in\mathds{M}_D^\e\cup \mathds{M}_{R,0}^\e}  \|\nabla u_\e\|_{L_2(B_{\e R_4}(M_k^\e)\setminus\om_k^\e)}^2.
\end{aligned}
\end{equation}

Given a point $x\in\Om^\e$, let us estimate the number
of balls $B_{\e R_4}(M_k^\e)$, $k\in\mathds{M}_{R,0}^\e\cup
\mathds{M}_D^\e$, containing this point; we denote this number by $N_\e(x)$. It is clear that  $N_\e(x)$ is equal to the number of points $M_k^\e$, $k\in\mathds{M}_{R,0}^\e\cup\mathds{M}_D^\e$, such that $\dist(x,M_k^\e)<\e R_4$. By inequality (\ref{2.2}) in Assumption~\ref{A1}, the mutual distances between the points $M_k^\e$ is at least $2 R_3\e$. Associating  each point $M_k^\e$ with a $n$-dimensional cube having a side $2 R_3\e$, we then conclude that $N_\e(x)$ does not exceed the number of such cubes intersecting the ball $B_{\e R_4}(x)$. All such cubes are located inside the ball of the bigger radius $\e(R_4+2\sqrt{n} R_3)$ centered at $x$. Comparing then their volumes, we get:
\begin{equation*}
N_\e(x) \leqslant \frac{\mes B_{\e(R_4+2\sqrt{n}R_3)}(0) }{(2\e R_3)^n} \leqslant \left(\frac{R_4}{2R_3}+\sqrt{n}\right)^n \mes B_1(0),
\end{equation*}
and hence, the number $N_\e(x)$ is bounded uniformly in $\e$ and $x\in\Om^\e$.  This fact allows us to continue estimating in  (\ref{4.10}):
\begin{equation}\label{4.11}
\|u_\e\|_{L_2(\Om^\e)}^2 \leqslant
C \big(\e\eta^{-n+1}\mu^{-1} + \e^2\eta^{-n+2}\vk\big) \Bigg( \mu \sum\limits_{k\in\mathds{M}_{R,0}^\e}  (\a_k^\e u_\e,u_\e)_{L_2(\p\om_k^\e)} +  \|\nabla u_\e\|_{L_2(\Om^\e)}^2
\Bigg).
\end{equation}
Then the right hand of (\ref{4.4}) obeys the inequality
\begin{align*}
\|f\|_{L_2(\Om^\e)} \|u_\e\|_{L_2(\Om^\e)} \leqslant &  C \big(\e\eta^{-n+1}\mu^{-1} + \e^2\eta^{-n+2}\vk\big) \|f\|_{L_2(\Om^\e)}^2
+ \frac{\mu}{2} \sum\limits_{k\in\mathds{M}_{R,0}^\e}  (\a_k^\e u_\e,u_\e)_{L_2(\p\om_k^\e)}
+ \frac{c_0}{8}  \|\nabla u_\e\|_{L_2(\Om^\e)}^2.
\end{align*}
Substituting this estimate into (\ref{4.4}), we finally get:
\begin{equation*}
 \|u\|_{W_2^1(\Om^\e)}^2
+ \mu(\e)
\sum\limits_{k\in \mathds{M}_{R,0}^\e} \big(\a_k^\e u_\e,u_\e\big)_{L_2(\p\tht_R^\e)}\leqslant C \big(\e\eta^{-n+1}\mu^{-1} + \e^2\eta^{-n+2}\vk\big) \|f\|_{L_2(\Om^\e)}^2.
\end{equation*}
 Since $\a_k^\e$ are non-negative, the obtained estimate implies immediately (\ref{2.16}) as well as
\begin{equation*}
\mu \sum\limits_{k\in \mathds{M}_{R,0}^\e} \big(\a_k^\e u_\e,u_\e\big)_{L_2(\p\tht_R^\e)}\leqslant C \big(\e\eta^{-n+1}\mu^{-1} + \e^2\eta^{-n+2}\vk\big) \|f\|_{L_2(\Om^\e)}^2.
\end{equation*}
This estimate and (\ref{2.16}), (\ref{4.11}) then yield (\ref{2.17}).

We proceed to proving that  estimates (\ref{2.16}), (\ref{2.17}) are order sharp. A main idea is to construct particular examples of problem (\ref{2.7}) such that the norms of its solutions have exactly the smallness order stated in estimates (\ref{2.16}), (\ref{2.17}).
In all our examples we let $\Om:=\mathds{R}^n$, $\cL=-\D$, $a^\e(x,u):=u$. The perforation is assumed to be pure periodic, namely, the points $M_k^\e$ are chosen as
as $M_k^\e:=4\e k$, $k\in\mathds{Z}^n$, $k=(k_1,\ldots,k_n)$.
The domains $\om_{k,\e}$ are supposed to be fixed and independent of $k$ and $\e$, and we simply let $\om_{k,\e}:=B_1(0)$. The cavities then are of the form $\om_k^\e:=\{x:\, |x-4\e k|<\e\eta\}$, $k\in\mathds{Z}^n$. The Dirichlet condition is imposed on the boundaries of the cavities $\p\om_p^\e$ as $k_n\geqslant 1$, while the Robin condition is settled on the boundaries $\p\om_p^\e$ for $k_n\leqslant 0$.   It is clear that under the made assumptions, problem (\ref{2.7}) is uniquely solvable for all $f\in L_2(\Om^\e)$ and $\l=0$.

Suppose first that $\eta$
is fixed and is independent of $\e$.
We denote $\square:=\{\xi\in\mathds{R}^n:\, |\xi_i|<2,\ i=1,\ldots,n\}$, where $\xi=(\xi_1,\ldots,\xi_n)$ are   Cartesian coordinates in $\mathds{R}^n$, and consider a boundary value problem
\begin{equation}\label{3.10}
\begin{gathered}
-\D_\xi v_0=1\quad\text{in}\quad \square\setminus B_\eta(0),\qquad v_0=0\quad\text{on}\quad \p B_\eta(0),
\\
v_0\big|_{\xi_i=-2}=v_0\big|_{\xi_i=2},\qquad \frac{\p v_0}{\p\xi_i}\bigg|_{\xi_i=-2}=\frac{\p v_0}{\p\xi_i}\bigg|_{\xi_i=2}.
\end{gathered}
\end{equation}
Owing to the Dirichlet condition on $\p B_\eta(0)$, this problem is obviously uniquely solvable and due to the standard smoothness improving theorems, the solution is infinitely differentiable in $\overline{\square\setminus B_\eta(0)}$. Hereinafter all solutions to various boundary value problems in $\square\setminus B_\eta(0)$ are supposed to be $\square$-periodically continued with the same notations for their continuations.

Let $f_D=f_D(x)$ be a non-zero infinitely differentiable real function with a compact support located in $\{x:\, x_n>5\}$. We introduce one more function:
\begin{equation*}
u_D^\e(x):=\e^2 v_0(x\e^{-1})f_D(x).
\end{equation*}
In view of the properties of the functions $v_0$ and $f_D$, the function $u_D^\e$  is infinitely differentiable, vanishes outside the support of $f_D$ and solves boundary value problem (\ref{2.7}) with
\begin{equation*}
f=f_D - h_D^\e,\qquad h_D^\e(x):=2\e\nabla_\xi v_0(x\e^{-1})\cdot \nabla f_D(x) + \e^2 v_0(x\e^{-1}) \D f_D.
\end{equation*}

Our next step is to calculate  $L_2$- and $W_2^1$-norms  for the functions $u_D^\e$ and $h_D^\e$. In order to do it, we employ the following auxiliary lemma.

\begin{lemma}\label{lm3.5}
Let $v=v(\xi)$ be a  $\square$-periodic function belonging to $L_1(\square\setminus B_{\eta}(0))$   and $h=h(x)$ be a continuously differentiable compactly supported function defined on $\mathds{R}^n$. The identity holds:
\begin{equation*}
\int\limits_{\Om^\e} v(x\e^{-1}) h(x)\,dx= \frac{1}{4^n} \int\limits_{\square\setminus B_{\eta}(0)} v(\xi)\,d\xi \int\limits_{\mathds{R}^d} h(x)\,dx + \e V_\e,
\qquad |V_\e|\leqslant C (\mes\supp h+1)\|v\|_{L_1(\square\setminus B_\eta(0))} \max\limits_{\mathds{R}^d}|\nabla h|,
\end{equation*}
where $C$ is some constant independent of $\e$, $\eta$, $h$, $\supp h$ and $v$.
\end{lemma}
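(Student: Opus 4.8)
The plan is to exploit the periodicity of $v(x\e^{-1})$ together with the slow variation of $h$ by decomposing $\mathds{R}^n$ into the period cells of the perforation and replacing $h$ on each cell by its value at the cell's center. Since $v$ is $\square$-periodic (period $4$ in each coordinate) and the perforation has period $4\e$, the oscillating factor $v(x\e^{-1})$ is exactly $4\e$-periodic and aligned with the cavities.

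First I would tile $\mathds{R}^n$ by the cubes $Q_k^\e := 4\e k + \e\square$, $k\in\mathds{Z}^n$, which cover $\mathds{R}^n$ up to a set of measure zero and each of which contains precisely one cavity $\om_k^\e = B_{\e\eta}(4\e k)$, since $\eta\leqslant 1$. Because $h$ is compactly supported, only finitely many cubes meet $\supp h$, so every sum below is finite. On a single cube the change of variables $x = 4\e k + \e\xi$ together with the periodicity $v((4\e k+\e\xi)\e^{-1}) = v(4k+\xi) = v(\xi)$ gives
\begin{equation*}
\int\limits_{Q_k^\e\setminus\om_k^\e} v(x\e^{-1}) h(x)\,dx = \e^n \int\limits_{\square\setminus B_\eta(0)} v(\xi)\, h(4\e k + \e\xi)\,d\xi,
\end{equation*}
where $\om_k^\e$ corresponds to $B_\eta(0)$ in the $\xi$-variable. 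Next I would replace $h(4\e k+\e\xi)$ by the constant $h(4\e k)$; since $|\xi|\leqslant 2\sqrt{n}$ on $\square$, the mean value theorem yields $|h(4\e k+\e\xi)-h(4\e k)|\leqslant C\e\max_{\mathds{R}^n}|\nabla h|$, so each cube contributes $\e^n h(4\e k)\int_{\square\setminus B_\eta(0)} v\,d\xi$ up to an error bounded by $C\e^{n+1}\|v\|_{L_1(\square\setminus B_\eta(0))}\max|\nabla h|$. The cubes meeting $\supp h$ lie in a fixed $O(\e)$-neighbourhood of $\supp h$, so their number does not exceed $C(\mes\supp h+1)\e^{-n}$; summing the per-cube errors therefore collects them into a term of order $\e(\mes\supp h+1)\|v\|_{L_1}\max|\nabla h|$, which is of the required form for $\e V_\e$.

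It then remains to identify the main term $\e^n\big(\int_{\square\setminus B_\eta(0)} v\,d\xi\big)\sum_k h(4\e k)$ with the claimed leading contribution. Writing $\e^n = 4^{-n}(4\e)^n$, the quantity $(4\e)^n\sum_k h(4\e k)$ is exactly the Riemann sum for $\int_{\mathds{R}^n} h\,dx$ associated with the partition into cubes of side $4\e$; comparing $\int_{Q_k^\e} h\,dx$ with $(4\e)^n h(4\e k)$ cube by cube and again invoking $|h(x)-h(4\e k)|\leqslant C\e\max|\nabla h|$ on $Q_k^\e$ bounds the deviation of this Riemann sum from $\int_{\mathds{R}^n} h\,dx$ by $C\e(\mes\supp h+1)\max|\nabla h|$. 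Multiplying by $4^{-n}\int_{\square\setminus B_\eta(0)} v$ and absorbing the result into $\e V_\e$ produces precisely the stated identity and the stated bound on $|V_\e|$.

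The computation is essentially routine; the only mildly delicate point is the bookkeeping for the cubes straddling $\p\supp h$, where the additive constant in the factor $(\mes\supp h+1)$ is needed to absorb the boundary layer (and to cover the case of a very small support). The low regularity of $v$ poses no difficulty at all, because $v$ enters only through the integrals $\int_{\square\setminus B_\eta(0)} v\,d\xi$ and $\|v\|_{L_1(\square\setminus B_\eta(0))}$, for which membership in $L_1(\square\setminus B_\eta(0))$ is all that is used.
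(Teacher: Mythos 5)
Your proof is correct and follows essentially the same route as the paper's: tile $\mathds{R}^n$ by the period cells $4\e k+\e\square$, use the $\square$-periodicity of $v$ after rescaling, replace $h$ on each cell by a constant with an $O(\e\max|\nabla h|)$ error, and bound the number of cells meeting $\supp h$ by $C(\mes\supp h+1)\e^{-n}$. The only (cosmetic) difference is that you freeze $h$ at the cell centers $h(4\e k)$ and then compare the resulting Riemann sum with $\int_{\mathds{R}^n}h\,dx$, whereas the paper replaces $h$ directly by its cell averages via the Hadamard lemma, which makes the main term appear in one step and renders your extra Riemann-sum comparison unnecessary.
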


\begin{proof}
We fix an arbitrary $k\in 4\mathds{Z}^n$ and for $x\in \e\square+\e p$ by the Hadamard lemma we have:
\begin{equation*}
h(x)=h(\e k)+h_k(x),\qquad h_k(x)=\sum\limits_{i=1}^{n} (x_i-\e k_i) \int\limits_{0}^{1} \frac{\p h}{\p x_i}(\e k +t (x-\e k))\,dt,\qquad |x_i-\e k_i|\leqslant 2\e.
\end{equation*}
Then by the Cauchy-Schwarz inequality we obtain:
\begin{equation*}
|h_k(x)|\leqslant 2\sqrt{n}\e \max\limits_{\mathds{R}^d} |\nabla h|,\qquad x\in \e k +\e\square.
\end{equation*}
Hence,
\begin{align}
&h(x)=h(\e k)+ h_k(x)  = \frac{1}{4^n\e^n} \int\limits_{\e k + \e\square} (h(x)-h_k(x))\,dx  + h_k(x)=\frac{1}{4^n\e^n} \int\limits_{\e k+ \e\square}  h(x) \,dx  + h^{(k)}(x),\nonumber
\\
&h^{(k)}(x):=h(x)- \frac{1}{4^n\e^n} \int\limits_{\e k+\e\square} h(x)\,dx,\qquad |h^{(k)}(x)|\leqslant 4\sqrt{n}\e \max\limits_{\mathds{R}^d} |\nabla h|.\label{3.36}
\end{align}

Employing the $\square$-periodicity of the function $v$, we represent the integral  in question as
\begin{equation}\label{3.37}
\begin{aligned}
\int\limits_{\Om^\e} v(x\e^{-1}) h(x)\,dx= &\sum\limits_{ \substack{
k\in 4\mathds{Z}^n
\\
(\e k +\e\square)\cap \supp h\ne\emptyset}} \int\limits_{\e k +\e(\square\setminus B_\eta(0))} v(x\e^{-1}) h(x)\,dx
\\
=& \sum\limits_{ \substack{
k\in 4\mathds{Z}^n
\\
(\e k +\e\square)\cap \supp h\ne\emptyset}}\frac{1}{4^n\e^n} \int\limits_{\e\square+\e k} h(x)\,dx
 \int\limits_{\e(\square\setminus B_\eta(0))} v(x\e^{-1})\,dx
 \\
 & +\sum\limits_{ \substack{
p\in 4\mathds{Z}^n
\\
(\e k +\e\square)\cap \supp h\ne\emptyset}} \int\limits_{\e k +\e(\square\setminus B_\eta(0))} v(x\e^{-1}) h^{(k)}(x)\,dx
\\
=& \frac{1}{4^n} \int\limits_{\mathds{R}^d} h(x)\,dx
 \int\limits_{ \square\setminus B_\eta(0) } v(\xi)\,d\xi
  \\
  &+\sum\limits_{ \substack{
k\in 4\mathds{Z}^n
\\
(\e k +\e\square)\cap \supp h\ne\emptyset}} \int\limits_{\e k +\e(\square\setminus B_\eta(0))} v(x\e^{-1}) h^{(k)}(x)\,dx.
\end{aligned}
\end{equation}
We estimate the second term in the right hand side of the obtained identity by means of inequality (\ref{3.36}) and we arrive at the statement of the lemma.
\end{proof}

We employ this lemma to calculate the following norms of the functions $u_D^\e$ and $f_D-h_D^\e$:
\begin{align*}
&\|u_D^\e\|_{L_2(\Om^\e)}^2=\frac{\e^4}{4^n} \|v_0\|_{L_2(\square\setminus B_\eta(0))}^2 \|f_D\|_{L_2(\mathds{R}^n)}^2 + O(\e^5),
\\
&\|\nabla u_D^\e\|_{L_2(\Om^\e)}^2=\frac{\e^2}{4^n} \|\nabla_\xi v_0\|_{L_2(\square\setminus B_\eta(0))}^2 \|f_D\|_{L_2(\mathds{R}^n)}^2 + O(\e^3),
\\
&\|f_D-h_D^\e\|_{L_2(\Om^\e)}^2=\frac{1}{4^n}\|f_D\|_{L_2(\mathds{R}^n)}^2 +O(\e^2).
\end{align*}
Hence,
\begin{equation}\label{3.18}
\frac{\|u_D^\e\|_{L_2(\Om^\e)}}{\|f_D-h_D^\e\|_{L_2(\Om^\e)}} = \e^2 \|v_0\|_{L_2(\square\setminus B_\eta(0))} +O(\e^3), \qquad \frac{\|\nabla u_D^\e\|_{L_2(\Om^\e)}}{\|f_D-h_D^\e\|_{L_2(\Om^\e)}} = \e \|\nabla_\xi v_0\|_{L_2(\square\setminus B_\eta(0))} +O(\e^2).
\end{equation}
These identities show that as $\eta=\eta_0$, the terms $\e \eta^{-\frac{n}{2}+1}(\e)\vk(\e)$ and $\e^2\eta^{-n+2}(\e)\vk(\e)$ in (\ref{2.16}), (\ref{2.17}) are order sharp.

Let us prove that the other terms in (\ref{2.16}), (\ref{2.17}) are also order sharp as $\eta=\eta_0$. We first of all observe that in this case
\begin{equation*}
\e \eta^{-n+1}\mu^{-1}=\e^2 \eta^{-n+1} (\e\mu)^{-1}=\left\{
\begin{aligned}
o&(\e^2)\quad \text{as}\quad \e\mu\to+\infty,\quad \e\to+0,
\\
O&(\e^2)\quad\text{as}\quad  0<C_1\leqslant\e\mu\leqslant C_2\ne0,\quad \e\to+0,
\end{aligned}\right. \qquad C_1,\, C_2=const,
\end{equation*}
and this is why we need to prove the sharpness of the other terms in
(\ref{2.16}), (\ref{2.17}) only in the case $\e\mu\to+0$  as $\e\to+0$.

We consider a boundary value problem
\begin{equation*}
-\D_\xi v_\mu=1\quad\text{in}\quad \square\setminus B_\eta(0),\qquad \frac{\p v_\mu}{\p|\xi|}=\e\mu v_\mu\quad\text{on}\quad \p B_1(0),
\end{equation*}
subject to the periodic boundary conditions as in (\ref{3.10}). This problem is also uniquely solvable. Let us study the behavior of the function $u_\mu$ as $\e\mu\to+0$. By $v_1$ we denote the solution to the problem
\begin{equation}\label{3.20}
-\D_\xi v_1=1\quad\text{in}\quad \square\setminus B_\eta(0),\qquad \frac{\p v_1}{\p|\xi|}=c_4\quad\text{on}\quad \p B_1(0),\qquad c_4:=\frac{\mes \square\setminus B_\eta(0)}{\mes \p  B_\eta(0)},
\end{equation}
subject to the periodic boundary conditions as in (\ref{3.10}). This problem is solvable and there is a unique solution satisfying the identity
\begin{equation}\label{3.21}
\int\limits_{\p B_\eta(0)} v_1\,d\xi=0.
\end{equation}
This solution is infinitely differentiable in $\overline{\square\setminus B_\eta(0)}$. Using this solution,
for each function $u\in C^1(\overline{\square\setminus B_\eta(0)})$
 satisfying the periodic boundary conditions on the lateral boundaries of $\square$ we get the identity
\begin{equation*}
\int\limits_{\square\setminus B_\eta(0)} |u|^2\, d\xi =  -\int\limits_{\square\setminus B_\eta(0)} |u|^2 \D_\xi v_1\, d\xi
= c_4
\int\limits_{\p B_\eta(0)} |u|^2\,ds  + \int\limits_{\square\setminus B_\eta(0)}  \nabla_\xi v_1\cdot\nabla_\xi |u|^2\, d\xi
\end{equation*}
and the estimate
\begin{equation}\label{3.23}
\|u\|_{L_2(\square\setminus B_\eta(0))}^2 \leqslant C \left(
\|u\|_{L_2(\p B_\eta(0))}^2 + \|\nabla_\xi u\|_{L_2(\square\setminus B_\eta(0))}^2\right),
\end{equation}
where $C$ is some constant independent of $u$. Since the space
$C^1(\overline{\square\setminus B_\eta(0)})$ is dense in $W_2^1(\square\setminus B_\eta(0))$, estimate (\ref{3.23}) is also true for all $u\in W_2^1(\square\setminus B_\eta(0))$.

We consider one more boundary value problem
\begin{equation*}
-\D_\xi v_2=0\quad\text{in}\quad \square\setminus B_\eta(0),\qquad \frac{\p v_2}{\p|\xi|}=v_1\quad\text{on}\quad \p B_1(0),
\end{equation*}
subject to the periodic boundary conditions as in (\ref{3.10}). Thanks to condition (\ref{3.21}), this problem is solvable and possesses a unique solution also obeying condition (\ref{3.21}) and being infinitely differentiable in $\overline{\square\setminus B_\eta(0)}$. We denote
\begin{equation*}
\hat{v}_\mu:=v_\mu - \frac{c_4}{\e\mu}
- v_1 - \e\mu v_2.
\end{equation*}
It is easy to see that the introduced function solves the boundary value problem
\begin{equation*}
-\D_\xi \hat{v}_\mu=0\quad\text{in}\quad \square\setminus B_\eta(0),\qquad \frac{\p \hat{v}_\mu}{\p|\xi|}=\e\mu v_\mu - \e^2\mu^2 v_2\quad\text{on}\quad \p B_1(0),
\end{equation*}
subject to the periodic boundary conditions as in (\ref{3.10}). The integral identity corresponding to this problem with $\hat{v}_\mu$ as the test function reads:
\begin{equation*}
\|\nabla \hat{v}_\mu\|_{L_2(\square\setminus B_\eta(0))}^2 + \e\mu \|  \hat{v}_\mu\|_{L_2(\square\setminus B_\eta(0))}^2 = \e^2\mu^2 (v_2, \hat{v}_\mu)_{L_2(B_\eta(0))}.
\end{equation*}
Hence,
\begin{equation*}
\|\nabla \hat{v}_\mu\|_{L_2(\square\setminus B_\eta(0))}^2 + \e\mu \|  \hat{v}_\mu\|_{L_2(\square\setminus B_\eta(0))}^2 \leqslant  \e^2\mu^2 \|v_2\|_{L_2(B_\eta(0))} \|\hat{v}_\mu\|_{L_2(B_\eta(0))}.
\end{equation*}
Then by inequality (\ref{3.23}) we get:
\begin{align*}
&\|\hat{v}_\mu\|_{L_2(\p B_\eta(0))} \leqslant \e\mu \|v_2\|_{L_2(B_\eta(0))},
\\
& \|\nabla\hat{v}_\mu\|_{L_2(\square\setminus B_\eta(0))} \leqslant \e^\frac{3}{2}\mu^\frac{3}{2}\|v_2\|_{L_2(B_\eta(0))},
\\
&\|\hat{v}_\mu\|_{L_2(\square\setminus B_\eta(0))} \leqslant C\e\mu \|v_2\|_{L_2(B_\eta(0))},
\end{align*}
where $C$ is a constant independent of $\e$, $\mu$, $v_2$ and $\hat{v}_\mu$. These inequalities yield the following asymptotic representation for $v_\mu$:
\begin{equation}\label{3.25}
v_\mu=\frac{c_4}{\e\mu}
+ v_1 + O(\e\mu),\quad \e\to+0,
\end{equation}
in $W_2^1(\square\setminus B_\eta(0))$-norm as $\e\mu\to+0$.

Let $f_R=f_R(x)$ be a non-zero infinitely differentiable real function with a compact support located in $\{x:\, x_n<-5\}$. We let:
\begin{equation*}
u_R^\e(x):=\e^2 v_\mu(x\e^{-1})f_R(x).
\end{equation*}
This function  is infinitely differentiable, vanishes outside the support of $f_R$ and solves problem (\ref{2.7}) in the considered particular case with the right hand side
\begin{equation*}
f=f_R - h_R^\e,\qquad h_R^\e(x):=2\e\nabla_\xi v_\mu(x\e^{-1})\cdot \nabla  f_R(x) + \e^2 v_\mu(x\e^{-1}) \D f_R.
\end{equation*}
By Lemma~\ref{lm3.5} and representation
 (\ref{3.25})
we
get:
\begin{equation}\label{3.29}
\begin{aligned}
&\|u_R\|_{L_2(\Om^\e)}= c_4 \e\mu^{-1}
\|f_R\|_{L_2(\mathds{R}^n)} + O(\e^3),
\qquad \|f_R-h_R^\e\|_{L_2(\Om^\e)}=\|f_R\|_{L_2(\mathds{R}^n)}+O(\e\mu^{-1}),
\\
&\|\nabla u_R\|_{L_2(\Om^\e)}=\|c_4\e\mu^{-1}\nabla_x f_R + \e \nabla_\xi v_1(\,\cdot\,\e^{-1}) f_R\|_{L_2(\Om^\e)}
+ O(\e^2\mu)
\end{aligned}
\end{equation}
as $\e\mu\to+0$. It is easy to see that
\begin{equation}\label{3.38}
\begin{aligned}
\|c_4\e\mu^{-1}\nabla_x f_R + \e \nabla_\xi v_1(\,\cdot\,\e^{-1}) f_R\|_{L_2(\Om^\e)}^2=&\e^2\mu^{-2}\|c_4\nabla_x f_R\|_{L_2(\Om^\e)}^2 +\e^2\|\nabla_\xi v_1(\,\cdot\,\e^{-1}) f_R\|_{L_2(\Om^\e)}^2
\\
&+\e^2\mu^{-1}c_4\int\limits_{\Om^\e}  \nabla f_R^2 \cdot \nabla_\xi v_1(\,\cdot\,\e^{-1}) \,dx
\end{aligned}
\end{equation}
and in view of boundary value problem (\ref{3.20}) we can integrate by parts as follows:
\begin{equation}\label{3.39}
\begin{aligned}
\e^2\mu^{-1}\int\limits_{\Om^\e}  \nabla f_R^2 \cdot \nabla_\xi v_1(\,\cdot\,\e^{-1}) \,dx=&-\e^2\mu^{-1} \int\limits_{\p\tht^\e_R} f_R^2 v_1(\,\cdot\,\e^{-1})\,ds -\e\mu^{-1}\int\limits_{\Om^\e}    f_R^2 \D_\xi v_1(\,\cdot\,\e^{-1}) \,dx
\\
=&\e\mu^{-1}\|f_R\|_{L_2(\Om^\e)}^2-\e^2\mu^{-1} \int\limits_{\p\tht^\e_R} f_R^2 v_1(\,\cdot\,\e^{-1})\,ds.
\end{aligned}
\end{equation}
Employing (\ref{3.36}) with $h=f_R^2$ and proceeding as in (\ref{3.37}), we find:
\begin{equation}\label{3.40}
\begin{aligned}
\e^2\mu^{-1} &\int\limits_{\p\tht^\e_R} f_R^2(x) v_1(x\e^{-1})\,ds= \e^2\mu^{-1} \sum\limits_{ \substack{
k\in 4\mathds{Z}^n
\\
(\e k +\e\square)\cap \supp h\ne\emptyset}} \int\limits_{\e k +  \p B_{\e\eta}(0)} f_R^2(x)v_1(x\e^{-1})\,dx
\\
&= \e^2\mu^{-1} \sum\limits_{ \substack{
k\in 4\mathds{Z}^n
\\
(\e k +\e\square)\cap \supp h\ne\emptyset}} \frac{1}{4^n\e^n}\int\limits_{\e k +\e \square} f_R^2(x)\,dx \int\limits_{\e k + \p B_{\e\eta}(0)} v_1(x\e^{-1})\,ds + O(\e^2\mu^{-1})
\end{aligned}
\end{equation}
and in view of (\ref{3.21}) we see that
\begin{equation*}
  \e^2\mu^{-1} \int\limits_{\p\tht^\e_R} f_R^2(x) v_1(x\e^{-1})\,ds= O(\e^2\mu^{-1}).
\end{equation*}
Applying now Lemma~\ref{lm3.5}, by (\ref{3.38}), (\ref{3.39})
we  get:
\begin{equation*}
\|c_4\e\mu^{-1}\nabla_x f_R + \e \nabla_\xi v_1(\,\cdot\,\e^{-1}) f_R\|_{L_2(\Om^\e)}^2=\e\mu^{-1} \|f_R\|_{L_2(\mathds{R}^n)}^2 + O(\e^2\mu^{-1}).
\end{equation*}
This identity and (\ref{3.29}) prove the order sharpness of the terms $\e^\frac{1}{2} \eta^{-\frac{n}{2}+\frac{1}{2}}(\e)\mu^{-\frac{1}{2}}(\e) $ and $\e \eta^{-n+1}(\e)\mu^{-1}(\e)$ in the right hand sides of (\ref{2.16}), (\ref{2.17}) as $\eta$ is independent of $\e$.

We proceed to the case $\eta\to+0$ as $\e\to+0$. We begin with an auxiliary problem
\begin{equation*}
-\D v_3=1\quad\text{in}\quad \square\setminus\{0\},\qquad v_3= G_n(|\xi|)+O(|\xi|^2),\quad \xi\to0,
\end{equation*}
subject to the periodic boundary conditions as in (\ref{3.10}),
where
\begin{equation*}
G_n(t):=\frac{16}{2\pi}\ln t\quad\text{as}\quad n=2,\qquad
G_n(t):=\frac{4^n t^{-n+2}}{(2-n)\mes_{n-1} \p B_1(0)}\quad\text{as}\quad n\geqslant 3,
\end{equation*}
where $\mes_{n-1}(\p B_1(0))$ is the $(n-1)$-dimensional area of the sphere $\p B_1(0)$.
Such problem is uniquely solvable and the solution is infinitely differentiable in $\overline{\square}\setminus\{0\}$.

By $\chi_6=\chi_6(t)$ we denote an infinitely differentiable cut-off function vanishing as $t>2$ and equalling to one as $t<1$. We let $\chi_7(\xi):=\chi_6\big(|\xi|\eta^{-\frac{1}{2}}\big)$ and
\begin{align*}
v_4(\xi):=\big(1-\chi_7(\xi)\big) v_3(\xi) + G_n(|\xi|) \chi_7(\xi) - G_n(\eta),\qquad v_5(\xi):=v_4(\xi) + (\e\mu)^{-1} G_n'(\eta),
\end{align*}
The function $v_4$ satisfies the Dirichlet boundary condition on $\p B_\eta(0)$, while the function $v_5$ does the Robin condition:
\begin{equation*}
\frac{\p v_5}{\p |\xi|}=\e\mu v_5\quad\text{on}\quad\p B_\eta(0).
\end{equation*}
It is also straightforward to confirm that
\begin{align}
-&\D_\xi v_4=-\D_\xi v_5=1+f_\eta(\xi)\quad\text{in}\quad \square\setminus B_\eta(0),\nonumber
\\
&
\begin{aligned}
&f_\eta(\xi):= 2\nabla_\xi \chi_7\cdot \nabla (v_3-G_n)+(v_3-G_n)\D_\xi \chi_7-\chi_7,
\qquad
\|f_\eta\|_{L_2(\square\setminus B_\eta(0))} =O(\eta^\frac{n}{4}),
\\
& \|v_4\|_{L_2(\square\setminus B_\eta(0))}^2 = 4^n G_n^2(\eta) + O\big(\eta^{-n+4}\ln^2\eta\big),
 \\
 &\|v_5\|_{L_2(\square\setminus B_\eta(0))}^2 = 4^n \big((\e\mu)^{-1} G_n'(\eta)-G_n(\eta)\big)^2 + O\big(\eta^{-n+4}\ln^2\eta\big),
\\
& \|\nabla_\xi v_4\|_{L_2(\square\setminus B_\eta(0))}^2 =\|\nabla_\xi v_5\|_{L_2(\square\setminus B_\eta(0))}^2 =c_5 G_n(\eta) + O(\eta^{-n+3}),
\end{aligned}\label{3.35}
\end{align}
where $c_5$ is some fixed non-zero constant independent of $\eta$. We then let
\begin{equation*}
u_D(x):=\e^2 v_4(x\e^{-1}) f_D(x),\qquad u_R(x):=\e^2 v_5(x\e^{-1}) f_R(x),
\end{equation*}
and we see immediately that both these functions solve problem (\ref{2.7}) with the right hand sides
\begin{align*}
&f(x)=f_D(x)+h_D(x,\e), && h_D(x,\e):=f_D(x) f_\eta(x\e^{-1}) - 2\e \nabla_\xi v_4(x\e^{-1}) \cdot \nabla f_D(x) - \e^2  v_4(x\e^{-1}) \D f_D(x),
\\
&f(x)=f_R(x)+h_R(x,\e), && h_R(x,\e):=f_R(x) f_\eta(x\e^{-1}) - 2\e \nabla_\xi v_5(x\e^{-1}) \cdot \nabla f_R(x) - \e^2  v_5(x\e^{-1}) \D f_R(x).
\end{align*}
Applying Lemma~\ref{lm3.5} and using identities (\ref{3.35}), we get:
\begin{equation}\label{3.41}
\begin{aligned}
& \|u_D\|_{L_2(\Om^\e)}^2=\e^4 G_n^2(\eta)\|f_D\|_{L_2(\mathds{R}^d)}^2 + O\big(\e^4\eta^{-n+4}\ln^2\eta+\e^5 G_n^2(\eta)\big),
\\
&\|u_R\|_{L_2(\Om^\e)}^2=\e^4 \big((\e\mu)^{-1} G_n'(\eta)-G_n(\eta)\big)^2\|f_R\|_{L_2(\mathds{R}^d)}^2
\\
&\hphantom{\|u_R\|_{L_2(\Om^\e)}^2=}+ O\big(\e^4\eta^{-n+4}\ln^2\eta+\e^5 \big((\e\mu)^{-1} G_n'(\eta)-G_n(\eta)\big)^2\big),
\\
&\|\nabla u_D\|_{L_2(\Om^\e)}^2=\frac{c_5}{4^n} \e^2 G_n(\eta) \|f_D\|_{L_2(\mathds{R}^d)}^2 + O\big(\e^2\eta^{-n+3} + \e^3 G_n(\eta)\big),
\\
&\|h_D\|_{L_2(\Om^\e)}^2=
O(\eta^{\frac{n}{2}} + \e^2 |G_n(\eta)|),
\qquad \|h_R\|_{L_2(\Om^\e)}^2=
O(\eta^{\frac{n}{2}} + \e^2 G_n(\eta)+\e^2\mu^{-2}\eta^{-2n+2}).
\end{aligned}
\end{equation}

To understand the behavior of the norm $\|\nabla u_R\|_{L_2(\Om^\e)}^2$ as $\e\to+0$, we first observe that in view of the mentioned boundary value problem for $u_R$, it satisfies the integral identity
\begin{equation*}
\|\nabla u_R\|_{L_2(\Om^\e)}^2=(f_R+h_R,u_R)_{L_2(\Om^\e)} - \mu \|u_R\|_{L_2(\p\tht_R^\e)}^2 = (f_R+h_R,u_R)_{L_2(\Om^\e)}  - \e^2\mu^{-1} (G'(\eta))^2 \int\limits_{\p\tht_R^\e} f_R^2(x)\,dx.
\end{equation*}
It follows from (\ref{3.40}) with $v_1$ replaced by $1$ that
\begin{equation*}
\e^2\mu^{-1}(G'(\eta))^2 \int\limits_{\p\tht_R^\e} f_R^2(x)\,dx = \frac{\mes_{n-1} B_1(0)}{4^n} \e\mu^{-1} (G'(\eta))^2 \eta^{n-1} \int\limits_{\mathds{R}^d} f_R^2\,dx + O(\e^2\mu^{-1}\eta^{-n+1}).
\end{equation*}
Then by Lemma~\ref{lm3.5} we obtain:
\begin{align*}
\|\nabla u_R\|_{L_2(\Om^\e)}^2=& \frac{\e^2}{4^n} \|f_R\|_{L_2(\mathds{R}^n)}^2 \int\limits_{\square\setminus B_\eta(0)} v_5(\xi)\,d\xi\, (1+O(\e))
\\
&- \frac{4^n \mes_n B_1(0)}{\mes_{n-1}^2 \p B_1(0)} \e\mu^{-1}\eta^{-n+1} \|f_R\|_{L_2(\mathds{R}^n)}^2  + O(\e^2\mu^{-1}\eta^{-n+1})
\\
=& \left( \frac{4^n}{\mes_{n-1} \p B_1(0)}\left(1 - \frac{1}{n}
\right)\e\mu^{-1} - \e^2 G_n(\eta)\right) \|f_R\|_{L_2(\mathds{R}^n)}^2 + O\big(\e^2\mu^{-1} \eta^{-n+1} +\e^2|G_n(\eta)|\big).
\end{align*}
Employing the obtained identity and (\ref{3.41}) and proceeding as in (\ref{3.18}), we see that  estimates (\ref{2.16}), (\ref{2.17}) are order sharp as $\eta\to+0$.
The proof of Theorem~\ref{th1} is complete.

\subsection{Proof of Theorem~\ref{th2}}

The proof follows the same lines as in the previous subsection and below we just list necessary modifications. Lemma~\ref{lm4.1} remains true and no changes are needed. Identity (\ref{3.6}) and inequality (\ref{4.4}) become
\begin{align}
&\RE\hf_A(u_\e,u_\e)+\sum\limits_{k\in\mathds{M}_R^\e} \big(a^\e(\,\cdot\,,u_\e),u_\e\big)_{L_2(\p\om_k^\e)}
-\RE\l\|u_\e\|_{L_2(\Om^\e)}^2=\RE(f,u_\e)_{L_2(\Om^\e)}, \nonumber
\\
&\frac{c_0}{4} \|\nabla u\|_{L_2(\Om^\e)}^2 + \|u\|_{L_2(\Om^\e)}^2
\leqslant \|f\|_{L_2(\Om^\e)} \|u_\e\|_{L_2(\Om^\e)}. \label{3.7}
\end{align}
Inequality (\ref{4.5}) remains true, just the sum is to be taken over $k\in\mathds{M}_D^\e$. A next step is a modification of estimates (\ref{4.10}), (\ref{4.11}), which is true owing to Lemma~\ref{lm3.4}:
\begin{equation}\label{3.8}
\begin{aligned}
\|u_\e\|_{L_2(\Om^\e)}^2 \leqslant &C \sum\limits_{k\in\mathds{M}_D^\e} \|u_\e\|_{L_2(B_{\e R_4}(M_k^\e))}^2 \leqslant C \e^2 \eta^{-n+2}\vk(\e)  \sum\limits_{k\in\mathds{M}_D^\e} \|\nabla u_\e\|_{L_2(B_{\e R_4}(M_k^\e))}^2
\\
\leqslant& C \e^2 \eta^{-n+2}\vk(\e)  \|\nabla u_\e\|_{L_2(\Om^\e)}^2
\end{aligned}
\end{equation}
with some constants $C$ independent of $\e$, $\eta$ and $u_\e$. This estimate and (\ref{3.7}) then yields (\ref{2.29}). Substituting  this inequality into (\ref{3.8}), we arrive at (\ref{2.30}). The sharpness of estimates (\ref{2.29}), (\ref{2.30}) can be checked by means of the functions $u_D$ introduced in the previous section.  The proof is complete.

\section*{Acknowledgments}

 The authors thank the referee for valuable remarks, which allowed to improve the initial version of the paper. They also  thank V.E.~Bobkov and P.~Freitas for useful discussions on the Cheeger's estimates.

\section*{Funding}

The work is supported by  the Czech Science Foundation within the project 22-18739S.

\section*{Conflict of interest}

The authors declare that they have no conflicts of interest.

\section*{Data availability}

Not applicable in the manuscript as no datasets were generated or analyzed during the current study.


\begin{thebibliography}{99}


\bibitem{Post}  Ann\'e, C.,  Post, O.: Wildly perturbed manifolds:
norm resolvent and spectral convergence. J. Spectr. Theory {\bf 11}:1, 229--279 (2021).

\bibitem{MSB21} Borisov, D.I., Mukhametrakhimova, A.I.: Uniform convergence and asymptotics for problems in domains finely perforated along a prescribed manifold in the case of the homogenized Dirichlet condition.  Sb. Math. {\bf 212}:8, 1068--1121 (2021).

\bibitem{AA22} Borisov, D.I., Mukhametrakhimova, A.I.: Norm convergence for problems with perforation along a given manifold
  with nonlinear Robin condition on boundaries of cavities. (2022), preprint: arXiv:2202.10767.


\bibitem{PRSE16} Borisov, D., Cardone, G.,  Durante, T.: Homogenization and uniform resolvent convergence for elliptic operators in a strip perforated along a curve.  Proc. Roy. Soc. Edinburgh. Sec. A. Math. {\bf 146}:6, 1115--1158 (2016).


\bibitem{ChDR} Cherednichenko K., Dondl P., and R\"osler F.: Norm-resolvent convergence in perforated
domains. Asymp. Anal. {\bf 110}:3-4, 163--184 (2018).


\bibitem{Cheeger} Cheeger J.: A lower bound for the smallest eigenvalue of the Laplacian. in ``Problems
in analysis'', Sympos. in honor of S. Bochner, 1969, Princeton Univ. Press, Princeton.
195--199 (1970).

\bibitem{Sha} D\'{\i}az, J.~I., G\'omez-Castro, D., Shaposhnikova, T.~A.:
Nonlinear reaction-diffusion processes for nanocomposites: anomalous improved homogenization. De Gruyter, Berlin, 2021.


\bibitem{Dub} Dubinskii, Yu.A.: Nonlinear elliptic and parabolic equations.  J. Math. Sci., {\bf 12}:5, 475--554 (1979).


\bibitem{HH} Bobo Hua, Yan Huang:  Neumann Cheeger constants on graphs. J. Geom. Anal.
{\bf} 28:3,  2166--2184 (2018).




\bibitem{ZKO}   Jikov, V.V., Kozlov, S.M., Oleinik, O.A.:
Homogenization of differential operators. Fiziko-Matematicheskaya Literatura, Moscow  (1993). (in Russian).


\bibitem{Khra} Khrabustovskyi, A., Plum, M.: Operator estimates for homogenization of the Robin Laplacian
in a perforated domain (2021). preprint: arXiv:2106.10216.


\bibitem{Khra2}
  Khrabustovskyi, A., Post. O.: Operator estimates for the crushed ice problem. Asymp. Anal. {\bf 110}(3-4), 137--161 (2018).


\bibitem{Li} Li, P.:  Geometric analysis, Cambridge Univ. Press, Cambridge (2012).


\bibitem{MK} Marchenko, V.A., Khruslov, E.Ya.:
Boundary value problems in domains with a fine-grained boundary, Naukova Dumka, Kiev, (1974). (in Russian).

\bibitem{MK1} Marchenko, V.A., Khruslov, E.Ya.:   Homogenization of
partial differential equations. Birkh\"auser, Boston (2006).

\bibitem{OIS} Oleinik, O.A., Iosifyan, G.A., Shamaev, A.S.:
Mathematical problems in elasticity and homogenization.   North-Holland, Amsterdam  (1992).

\bibitem{Past}   Pastukhova, S.E.: Resolvent approximations in $L_2$-norm for elliptic operators acting in a perforated space. Cont. Math. Fund. Direct. {\bf 66}:2, 314--334 (2020). (in Russian).


\bibitem{Sus} Suslina, T.A.: spectral approach to homogenization of elliptic operators in a perforated space. Rev. Math. Phys. {\bf 30}:08, 1840016 (2018).

\bibitem{Vain} Vainberg, M.M.:
Variational method and method of monotone operators in the theory of nonlinear equations.
A Halsted Press Book, New York-Toronto; John Wiley \& Sons; Jerusalem- London. (1973).

\bibitem{Zhi} Zhikov, V.V.: Spectral method in homogenization theory. Proc. Steklov Inst. Math. {\bf 250}, 85--94 (2005).



\end{thebibliography}
\end{document}